\def\RR{{\bf R}}
\def\barRR{\overline{\bf R}}
\def\ZZ{{\bf Z}}
\newcommand{\opt}{\mathop{\rm opt} }
\newcommand{\dom}{\mathop{\rm dom} }
\newcommand{\Conv}{\mathop{\rm Conv} }
\newtheorem{Thm}{Theorem}[section]
\newtheorem{Prop}[Thm]{Proposition}
\newtheorem{Lem}[Thm]{Lemma}
\newtheorem{Rem}[Thm]{Remark}
\begin{document}

\title{Discrete Convex Functions on Graphs and \\Their Algorithmic Applications}
\author{Hiroshi HIRAI \\
	 Department of Mathematical Informatics, \\
	Graduate School of Information Science and Technology, \\
	The University of Tokyo, Tokyo, 113-8656, Japan, \\
	\texttt{hirai@mist.i.u-tokyo.ac.jp}}
\maketitle

\begin{abstract}
	The present article is an exposition of 
	a theory of discrete convex functions on certain graph structures, 
	developed by the author in recent years. 
	This theory is a spin-off of discrete convex analysis by Murota, 
	and is motivated by combinatorial dualities in multiflow problems 
	and the complexity classification of facility location problems on graphs.
	We outline the theory and algorithmic applications 
	in  combinatorial optimization problems.  
\end{abstract}

\section{Introduction}
\label{sec:intro}
The present article is an exposition of a theory of discrete convex functions on certain graph structures, developed by the author in recent years. This theory is viewed as a spin-off of {\em Discrete Convex Analysis (DCA)}, which is a theory of convex functions on the integer lattice 
and has been developed by Murota and his collaborators 
in the last 20 years; see \cite{Murota98,MurotaBook,MurotaDevelop} and \cite[Chapter VII]{FujiBook}.
Whereas the main targets of DCA are matroid-related optimization (or submodular optimization),
our new theory is motivated by combinatorial dualities arising from multiflow problems~\cite{HH11folder} and the complexity classification of certain facility location problems on graphs~\cite{HH150ext}.

The heart of our theory is analogues of  
{\em L$^\natural$-convex functions}~\cite{FT90,FM00} 
for certain graph structures, 
where L$^\natural$-convex functions are 
one of the fundamental classes of discrete convex functions on $\ZZ^n$, 
and play primary roles in DCA.
These analogues are inspired by
the following intriguing properties of L$^\natural$-convex functions: 

\begin{itemize}
\item An L$^\natural$-convex function is (equivalently) 
defined as a function $g$ on $\ZZ^n$
satisfying a discrete version of the convexity inequality, 
called the {\em discrete midpoint convexity}:
\begin{equation}\label{eqn:dmc}
g(x) + g(y) \geq g (\lfloor (x+y)/2 \rfloor ) + 
g( \lceil (x+y)/2 \rceil) \quad (x,y \in \ZZ^n),
\end{equation}
where $\lfloor \cdot \rfloor$ and $\lceil \cdot \rceil$ are operators rounding down and up, respectively, the fractional part of each component.

\item L$^\natural$-convex function $g$ is {\em locally submodular} in the following sense: For each $x \in \ZZ^n$, 
the function on $\{0,1\}^n$ defined by $u \mapsto g(x+u)$
is submodular.
\item
Analogous to ordinary convex functions, 
L$^\natural$-convex function $g$ enjoys an optimality criterion of a local-to-global type:
If $x$ is not a minimizer of $g$, then there exists $y \in (x + \{0,1\}^n) \cup (x - \{0,1\}^n)$ 
with $g(y) < g(x)$.
\item
This leads us to a conceptually-simple minimization algorithm, 
called the {\em steepest descent algorithm (SDA)}:
For $x \in \ZZ^n$, find a (local) minimizer $y$ of $g$ over $(x + \{0,1\}^n) \cup (x - \{0,1\}^n)$
via submodular function minimization (SFM). 
If $g(y) = g(x)$, then $x$ is a (global) minimizer of $g$.
If $g(y) < g(x)$, then let $x := y$, and repeat.
\item
The number of iterations of SDA is sharply bounded by a certain $l_{\infty}$-distance between the initial point and minimizers~\cite{MurotaShioura14}.
\item L$^\natural$-convex function $g$ is extended to 
a convex function $\overline{g}$ on $\RR^n$ 
via the {\em Lov\'asz extension}, and this convexity property 
characterizes the  L$^\natural$-convexity.
\end{itemize}
Details are given in~\cite{MurotaBook}, and
a recent survey~\cite{Shioura17L-convex_survey} is also a good source of  L$^\natural$-convex functions.

We consider certain classes of graphs $\Gamma$ that canonically 
define functions (on the vertex set of $\Gamma$) having analogous properties, 
which we call {\em L-convex functions on $\Gamma$} (with $^\natural$ omitted).
The aim of this paper is to explain these L-convex functions and
their roles in combinatorial optimization problems, 
and to demonstrate the SDA-based algorithm design.

Our theory is parallel with recent developments 
in generalized submodularity and {\em valued constraint satisfaction problem (VCSP)}~\cite{KTZ13,TZ16ACM,ZivnyBook}.
Indeed, localizations of these L-convex functions give rise to 
a rich subclass of generalized submodular functions that
include $k$-submodular functions~\cite{HK12} and submodular functions on diamonds~\cite{FKMTT14,Kuivinen11}, 
and are polynomially minimizable in the VCSP setting.

The starting point is the observation that
if $\ZZ^n$ is viewed as a grid graph (with order information),
some of the above properties of L$^\natural$-convex functions 
are still well-formulated. Indeed, 
extending the discrete midpoint convexity to trees, 
Kolmogorov~\cite{Kolmogorov11} 
introduced a class of discrete convex functions, 
called {\em tree-submodular functions}, on the product of rooted trees, 
and showed several analogous results. 
In Section~\ref{sec:gridstructures}, 
we discuss L-convex functions on such grid-like structures.
We start by considering a variation of tree-submodular functions, 
where the underlying graph is the product of zigzagly-oriented trees.
Then we explain that a theory of L-convex functions is naturally developed on
a structure, known as {\em Euclidean building}~\cite{BuildingBook}, 
which is a kind of an amalgamation of $\ZZ^n$ 
and is a far-reaching generalization of a tree.
Applications of these L-convex functions are given in Section~\ref{sec:applications}.
We outline SDA-based efficient combinatorial algorithms 
for two important multiflow problems.
In Section~\ref{sec:oriented_modular}, 
we explain L-convex functions on a more general class of graphs, 
called {\em oriented modular graphs}.
This graph class emerged from the complexity classification 
of the {\em minimum 0-extension problem}~\cite{Kar98a}.
We outline that the theory of L-convex functions leads to
a solution of this classification problem.
This was the original motivation of our theory.

The contents of this paper are based 
on the results in papers~\cite{HH14extendable,HH15node_multi,HH150ext,HH16L-convex}, 
in which further details and omitted proofs are found.

Let $\RR$, $\RR_+$, $\ZZ$, and $\ZZ_+$ denote 
the sets of reals, nonnegative reals, integers, and nonnegative integers, respectively.
In this paper, $\ZZ$ is often regarded as an infinite path 
obtained by adding 
an edge to each consecutive pair of integers.
Let $\overline{\RR} := \RR \cup \{\infty \}$, 
where $\infty$ is the infinity element treated as $a < \infty$,
$a + \infty = \infty$ for $a \in \RR$, and $\infty + \infty = \infty$.
For a function 
$g: X \to \overline{\RR}$ on a set $X$, 
let $\dom g := \{ x \in X \mid g(x) < \infty \}$.

\section{L-convex function on grid-like structure}
\label{sec:gridstructures}

In this section, we discuss L-convex functions on grid-like structures.
In the first two subsections (Sections~\ref{subsec:tree-grids} and \ref{subsec:skew}), 
we consider specific underlying graphs (tree-product and twisted tree-product) that 
admit analogues of discrete midpoint operators 
and the corresponding L-convex functions.
In both cases, the local submodularity 
and the local-to-global optimality criterion 
are formulated in a straightforward way (Lemmas~\ref{lem:L-optimality1}, \ref{lem:local_1}, \ref{lem:L-optimality2}, and \ref{lem:local_2}).
In Section~\ref{subsec:SDA}, 
we introduce the steepest descent algorithm (SDA) in a generic way, 
and present the iteration bound (Theorem~\ref{thm:l_inf_bound}).
In Section~\ref{subsec:building}, 
we explain that the L-convexity is naturally generalized
to that in Euclidean buildings of type C, and that 
the Lov\'asz extension theorem
can be generalized via 
the geometric realization of Euclidean buildings and CAT(0)-metrics (Theorem~\ref{thm:convex}).

We use a standard terminology on posets (partially ordered sets) and lattices; 
see e.g., \cite{Gratzer}.
Let $P = (P, \preceq)$ be a poset. 
For an element $p \in P$, 
the {\em principal ideal} $I_p$ is the set of elements 
$q \in P$ with $q \preceq p$, and the 
{\em principal filter} 
$F_p$ is the set of elements $q  \in P$ with $p \preceq q$.
For $p,q \in P$ with $p \preceq q$, 
let $\max \{p,q\} := q$ and $\min \{p,q\} := p$.
A {\em chain} is a subset $X \subseteq P$ 
such that for every $p,q \in X$ it holds that $p \preceq q$ or $q \preceq p$.
For $p \preceq q$, the {\em interval} $[p,q]$ of $p,q$ 
is defined as $[p,q]:= \{ u \in P \mid p \preceq u \preceq q \}$.
For two posets $P$, $P'$, 
the direct product $P \times P'$ becomes  
a poset by the direct product order $\preceq$ defined by $(p,p') \preceq (q,q')$ 
if and only if $p \preceq q$ in $P$ and $p' \preceq q'$ in $P'$.

For an undirected graph $G$, 
an edge joining vertices $x$ and $y$ is denoted by $xy$. 
When $G$ plays a role of
the domain of discrete convex functions, 
the vertex set $V(G)$ of $G$ is also denoted by $G$.
Let $d = d_G$ denote the shortest path metric on $G$.
We often endow $G$ with an edge-orientation, 
where $x \to y$ means that edge $xy$ is oriented from $x$ to $y$.
For two graphs $G$ and $H$, 
let $G \times H$ denote the Cartesian product of $G$ and $H$, 
i.e., the vertices are all pairs of vertices of $G$ and $H$ and  
two vertices $(x,y)$ and $(x',y')$ are adjacent if 
and only if $d_G(x,x') + d_H(y,y') = 1$.

\subsection{L-convex function on tree-grid}
\label{subsec:tree-grids}

Let $G$ be a tree. 
Let $B$ and $W$ denote the color classes of $G$ viewed as 
a bipartite graph.
Endow $G$ with a zigzag orientation so that $u \to v$ if and only 
if $u \in W$ and $v \in B$.
This orientation is acyclic.
The induced partial order on $G$ is denoted by $\preceq$, 
where $v \leftarrow u$ is interpreted as $v \preceq u$.
Discrete midpoint operators $\bullet$ and $\circ$ on $G$ 
are defined as follows.
For vertices $u, v \in G$, there uniquely exists 
a pair $(a, b)$ of vertices such that $d(u,v) = d(u,a) + d(a, b) + d(b,v)$, 
$d(u,a) = d(b, v)$, and $d(a,b) \leq 1$.
In particular, 
$a$ and $b$ are equal or adjacent, and hence comparable.
Let $u \bullet v := \min \{a,b\}$ 
and $u \circ v := \max \{a,b\}$. 

Consider the product $G^n$ of $G$; see Figure~\ref{fig:treegrid}
for $G^2$.
\begin{figure}[t]
	\centering
		\includegraphics[scale=0.8]{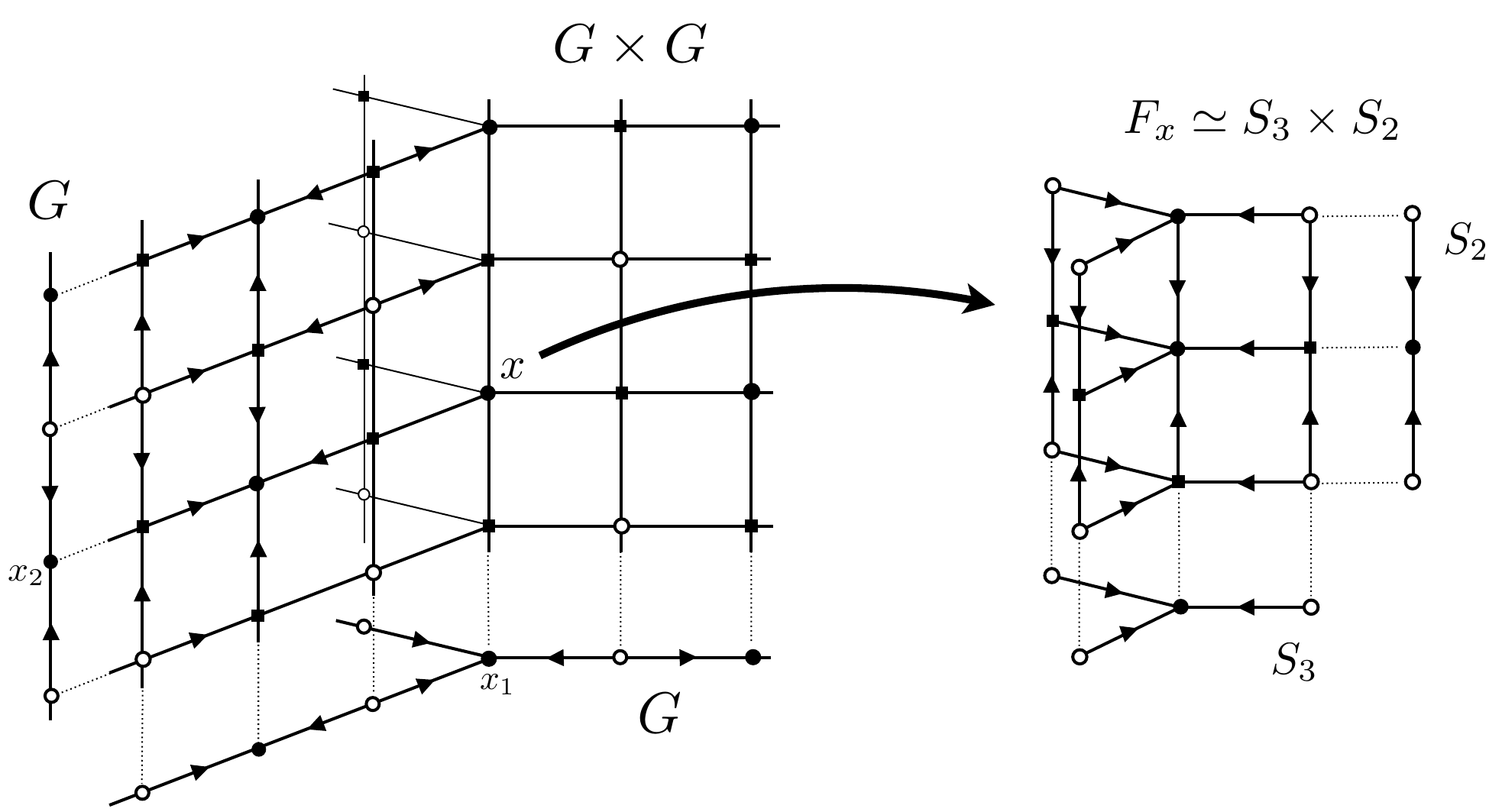}
		\caption{Tree-grid $G^2$. 
			Black and white  points in $G$ represent
			vertices in $B$ and $W$, respectively, 
			The principal filter of a vertex $x \in G^2$ 
			is picked out to the right, 
			which is a semilattice isomorphic to $S_{3} \times S_2$. 
		}
		\label{fig:treegrid}
\end{figure}\noindent
The operators $\bullet$ and $\circ$ are extended on $G^n$ component-wise: For $x =(x_1,x_2,\ldots,x_n), y = (y_1,y_2,\ldots,y_n) \in G^n$, let $x \bullet y$ and $x \circ y$ be defined by
\begin{equation}
x \bullet y :=  (x_1 \bullet y_1, x_2 \bullet y_2,\ldots, x_n \bullet y_n),
\  x \circ y := (x_1 \circ y_1, x_2 \circ y_2,\ldots, x_n \circ y_n).
\end{equation}

Mimicking the discrete midpoint convexity (\ref{eqn:dmc}), 
a function $g: G^n \to \barRR$ is called {\em L-convex} (or {\em alternating L-convex}~\cite{HH14extendable}) if it satisfies
\begin{equation}\label{eqn:midpoint}
g(x) + g(y) \geq g(x \bullet y) + g( x \circ y) \quad (x,y \in G^n).
\end{equation}
By this definition, 
a local-to-global optimality criterion is obtained 
in a straightforward way. 
Recall the notation that $I_x$ and $F_x$ are the principal ideal and filter of vertex $x \in G^n$. 
\begin{Lem}[{\cite{HH14extendable}}]\label{lem:L-optimality1} 
	Let $g: G^n \to \overline{\RR}$ be an L-convex function.
If $x \in \dom g$ is not a minimizer of $g$, there exists $y \in I_x \cup F_x$ with $g(y) < g(x)$.  
\end{Lem}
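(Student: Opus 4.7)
The plan is a shortest-descent argument. Among all $z \in \dom g$ with $g(z) < g(x)$ (non-empty by hypothesis), choose one minimizing the graph distance $d(x, z) := \sum_{i=1}^n d_G(x_i, z_i)$ in $G^n$; such a $z$ exists since the distance is a nonnegative integer. I will show $z \in I_x \cup F_x$, so that $y := z$ does the job.

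Suppose for contradiction that $z \notin I_x \cup F_x$. Applying the L-convexity inequality (\ref{eqn:midpoint}) to the pair $(x, z)$,
\[
 g(x \bullet z) + g(x \circ z) \leq g(x) + g(z) < 2 g(x),
\]
so at least one of the midpoints $x \bullet z, x \circ z$ has $g$-value strictly less than $g(x)$. To reach a contradiction with the minimality of $d(x, z)$, I need both midpoints to lie strictly closer to $x$ than $z$ does.

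The crux is the coordinate-wise identity
\[
 d(x, x \bullet z) + d(x, x \circ z) = d(x, z),
\]
which I would prove from the tree midpoint construction: for $u, v \in G$, the pair $(a, b)$ defining $u \bullet v, u \circ v$ sits on the $u$--$v$ geodesic with $d(u, a) = d(b, v)$ and $d(a, b) \leq 1$, whence $d(u, a) + d(u, b) = d(u, v)$ (a short case analysis in the parity of $d(u, v)$). The same analysis shows $x_i \bullet z_i = x_i \Longleftrightarrow z_i \succeq x_i$ and $x_i \circ z_i = x_i \Longleftrightarrow z_i \preceq x_i$ under the zigzag order, giving the global equivalences $x \bullet z = x \Longleftrightarrow z \in F_x$ and $x \circ z = x \Longleftrightarrow z \in I_x$.

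Combining these two facts: $d(x, x \bullet z) = d(x, z)$ forces $d(x, x \circ z) = 0$, hence $z \in I_x$, and symmetrically $d(x, x \circ z) = d(x, z)$ forces $z \in F_x$. Since $z \notin I_x \cup F_x$ by assumption, both midpoints satisfy $d(x, x \bullet z) < d(x, z)$ and $d(x, x \circ z) < d(x, z)$, contradicting the minimality of $d(x, z)$ because one of them has $g$-value below $g(x)$. The main obstacle is the geometric identity $d(x, x \bullet z) + d(x, x \circ z) = d(x, z)$ together with the characterization of when a midpoint coincides with $x$; once these tree facts are established the shortest-descent step is routine.
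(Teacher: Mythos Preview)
Your proof is correct and follows essentially the same descent-via-midpoints strategy as the paper. The only difference is cosmetic: you minimize the $\ell_1$-distance $\sum_i d_G(x_i,z_i)$, whereas the paper minimizes the $\ell_\infty$-distance $\max_i d_G(x_i,z_i)$; with the $\ell_\infty$ choice the minimizing $z$ automatically satisfies $\max_i d_G(x_i,z_i)\le 1$, so $x\bullet z\in I_x$ and $x\circ z\in F_x$ follow immediately without your separate characterization of when a midpoint coincides with~$x$.
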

\begin{proof} Suppose that $x \in \dom g$ is not a minimizer of $g$.
	There is $z \in \dom g$ such that $g(z) < g(x)$. 
	Choose such $z$ with minimum $\max_i d(x_i,z_i)$. 
	By $g(x) + g(z) \geq g(x \bullet z) + g(x \circ z)$,
	it necessarily holds that $\max_i d(x_i,z_i) \leq 1$,
	and one of $x \bullet z \in I_x$  and $x \circ z \in F_x$ 
	has a smaller value of $g$ than $x$.  
\end{proof}
This lemma says that $I_x$ and $F_x$ are ``neighborhoods" of $x$ 
for which the local optimality is defined. 
This motivates us to 
consider the class of functions appearing as the restrictions of $g$ to $I_x$ and $F_x$ for $x \in G^n$, which we call the {\em localizations} of $g$.
The localizations of L-convex functions give rise to
a class of submodular-type discrete convex functions 
known as {\em $k$-submodular functions}~\cite{HK12}.
To explain this fact, we introduce a class of semilattices 
isomorphic to $I_x$ or $F_x$. 

For a nonnegative integer $k$, 
let $S_k$ denote a $(k+1)$-element set with a special element $0$.
Define a partial order $\preceq$ on $S_k$ 
by $0 \preceq u$ for $u \in S_{k} \setminus \{0\}$ with no other relations.
Let $\sqcup$ and $\sqcap$ be binary operations on $S_k$
defined by
\[
u \sqcap v :=
\left\{\begin{array}{ll}
\min\{u,v\} & {\rm if}\ u \preceq v\ {\rm or}\ v \preceq u, \\
0 &{\rm otherwise,} 
\end{array}\right. \ 
 u \sqcup v :=
\left\{\begin{array}{ll}
\max\{u,v\} & {\rm if}\ u \preceq v\ {\rm or}\ v \preceq u, \\
0 &{\rm otherwise.} 
\end{array}\right. 
\]
For an $n$-tuple $\pmb k =(k_1,k_2,\ldots,k_n)$ of nonnegative integers, 
let ${S_{\pmb k}} :=  S_{k_1} \times S_{k_2} \times \cdots \times S_{k_n}$. 
A function $f: {S_{\pmb k}} \to \barRR$ is 
{\em $\pmb k$-submodular} 
if it satisfies
\begin{equation}
f(x) + f(y) \geq f(x \sqcap y) + f(x \sqcup y) \quad (x,y \in {S_{\pmb k}}),
\end{equation}
where operators $\sqcap$ and $\sqcup$ are extended to $S_{{\pmb k}}$ component-wise.
Then ${\pmb k}$-submodular functions are identical 
to submodular functions if $\pmb k= (1,1,\ldots,1)$ and
to bisubmodular functions if $\pmb k= (2,2,\ldots,2)$.

Let us return to tree-product $G^n$.
For every point $x \in G^n$, 
the principal filter $F_{x}$ is isomorphic to 
$S_{\pmb k}$ for ${\pmb k} = (k_1,k_2,\ldots,k_n)$, 
where $k_i = 0$ if $x_i \in W$ and
$k_i$ is equal to the degree of $x_i$ in $G$ if $x_i \in B$.
Similarly for the principal ideal $I_x$ (with partial order reversed).

Observe that operations $\bullet$ and $\circ$ coincides 
with $\sqcap$ and  $\sqcup$ (resp. $\sqcup$ and $\sqcap$) 
in any principle filter (resp. ideal).
Then an L-convex function on $G^n$ is locally $\pmb k$-submodular in the following sense.
\begin{Lem}[\cite{HH14extendable}]\label{lem:local_1}
An L-convex function
$g: G^n \to \overline{\RR}$ is $\pmb k$-submodular on $F_x$ and on $I_x$ for every $x \in \dom g$.	
\end{Lem}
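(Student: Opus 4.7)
The plan is to reduce the claim to the identification, made just before the lemma, of $F_x$ with the semilattice $S_{\pmb k}$ under which the discrete midpoint operators $(\bullet, \circ)$ on $G^n$ restricted to $F_x$ coincide with $(\sqcap, \sqcup)$, and similarly on $I_x$ with $(\sqcup, \sqcap)$. Once this identification is in hand, the L-convexity inequality restricted to pairs in $F_x$ (resp.\ in $I_x$) is literally the $\pmb k$-submodularity inequality for the restricted function.

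The first step is closure: for $u, v \in F_x$ I want to show both $u \bullet v$ and $u \circ v$ lie in $F_x$. Since the operators act componentwise, this reduces to showing that each single-coordinate filter $F_{x_i} \subseteq G$ is closed under the tree midpoint operators. If $x_i \in W$ then $F_{x_i} = \{x_i\}$ and there is nothing to check. If $x_i \in B$, then $F_{x_i} = \{x_i\} \cup N(x_i)$ where $N(x_i)$ is the set of white neighbors of $x_i$; a short case check on pairs $(u_i, v_i)$ from this set shows that the midpoint pair in $G$ is either $(x_i, x_i)$ or $(x_i, w)$ for some $w \in N(x_i)$, so both $u_i \bullet v_i$ and $u_i \circ v_i$ lie in $F_{x_i}$.

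The second step identifies the operators. Send $x_i \mapsto 0$, and, when $x_i \in B$, send the white neighbors of $x_i$ bijectively to the nonzero elements of $S_{k_i}$. The same case analysis now yields: if one of $u_i, v_i$ equals $x_i$, then $u_i \bullet v_i = \min\{u_i, v_i\}$ and $u_i \circ v_i = \max\{u_i, v_i\}$, matching the comparable case of $\sqcap, \sqcup$; if $u_i \neq v_i$ are distinct white neighbors, then their midpoint pair collapses to $x_i$, matching the incomparable case where both $\sqcap$ and $\sqcup$ return $0$. Taking products, the bijection $F_x \to S_{\pmb k}$ intertwines $(\bullet, \circ)$ with $(\sqcap, \sqcup)$, so the L-convexity inequality on $F_x$ is exactly $\pmb k$-submodularity. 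The case of $I_x$ is handled symmetrically with the partial order reversed, giving $\bullet \leftrightarrow \sqcup$ and $\circ \leftrightarrow \sqcap$, a swap that leaves the symmetric inequality unchanged. The only delicate point is keeping this operator correspondence straight; beyond that the proof is a direct unpacking of definitions, so I do not anticipate a serious obstacle.
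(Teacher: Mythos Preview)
Your proposal is correct and follows exactly the approach the paper indicates just before the lemma: the paper states (without detailed proof) that $F_x \simeq S_{\pmb k}$ and that $(\bullet,\circ)$ coincide with $(\sqcap,\sqcup)$ on $F_x$ (and with $(\sqcup,\sqcap)$ on $I_x$), after which the L-convexity inequality restricted to $F_x$ or $I_x$ is literally the ${\pmb k}$-submodularity inequality. Your coordinate-wise case analysis simply makes this observation explicit, so there is no real difference in approach.
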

In particular, an L-convex function on a tree-grid can be minimized 
via successive $\pmb k$-submodular function minimizations; 
see Section~\ref{subsec:SDA}.

\subsection{L-convex function on twisted tree-grid}\label{subsec:skew}

Next we consider a variant of a tree-grid, which is
obtained by {\em twisting} the product of two trees, 
and by taking the product.
Let $G$ and $H$ be infinite trees 
without vertices of degree one.
Consider the product $G \times H$, which is also a bipartite graph.
Let $B$ and $W$ denote the color classes of $G \times H$.
For each 4-cycle $C$ of $G \times H$, 
add a new vertex $w_C$ and new four edges joining $w_C$ and vertices in $C$.
Delete all original edges of $G \times H$, 
i.e., all edges non-incident to new vertices.
The resulting graph
is denoted by $G \boxtimes H$.
Endow $G \boxtimes H$ with an edge-orientation such that 
$x \to y$ if and only if $x \in W$ or $y \in B$.
Let $\preceq$ denote the induced partial order on $G \boxtimes H$. 
See Figure~\ref{fig:GXH} for~$G \boxtimes H$.
	\begin{figure}[t]
		\begin{center}
			\includegraphics[scale=0.8]{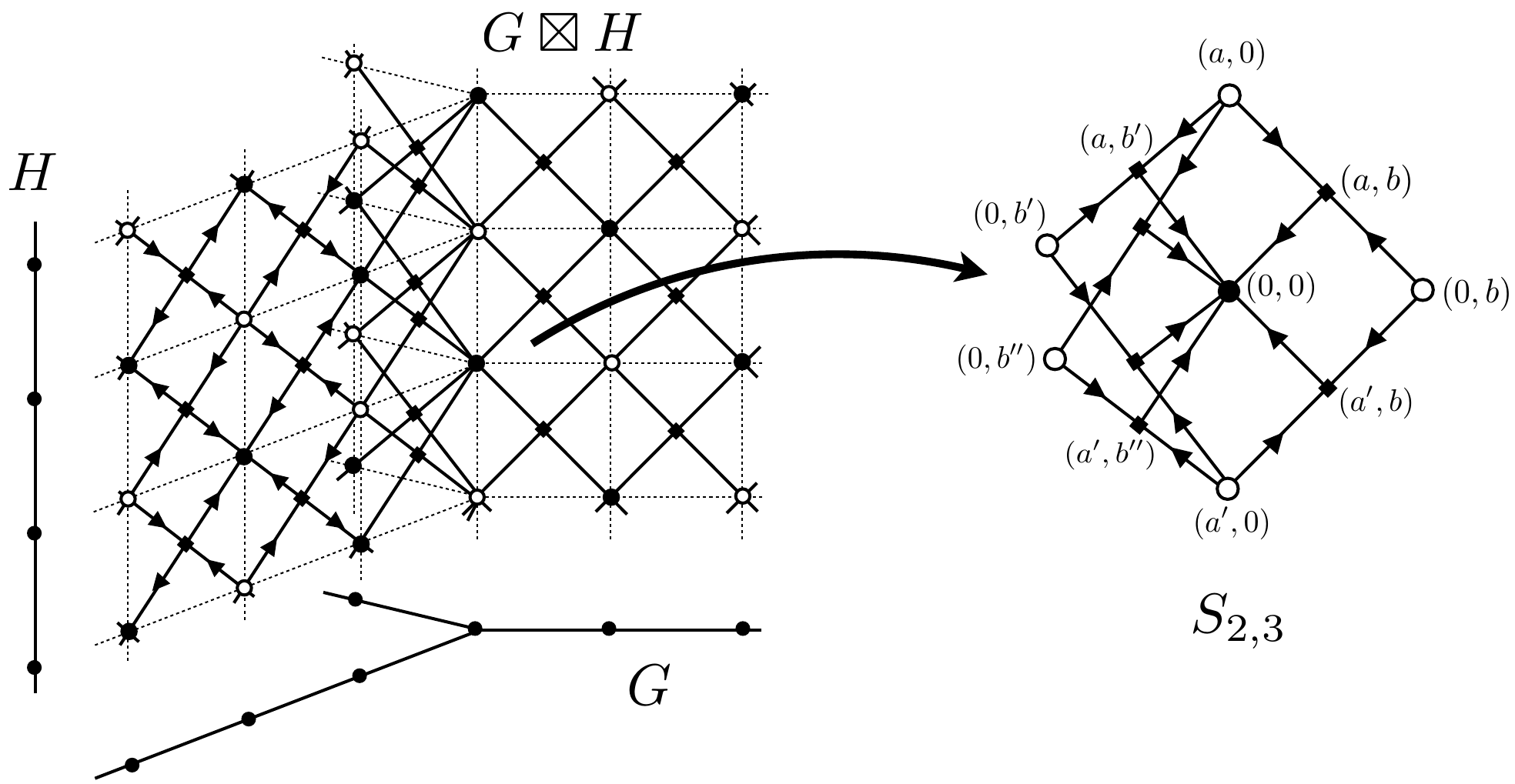}
			\caption{Twisted tree-grid $G \boxtimes H$. 
				Dotted lines represent edges of $G \times H$. 
				Black and white (round) points represent
				vertices in $B$ and $W$, respectively, 
				while square points correspond to 4-cycles in $G \times H$. 
				The principal filter of a black vertex 
				is picked out to the right, which is a semilattice isomorphic to $S_{2,3}$. 
				 }
			\label{fig:GXH}
		\end{center}
	\end{figure}\noindent

Discrete midpoint operators $\bullet$ and $\circ$ are defined as follows.
Consider two vertices $x,y$ in $G \boxtimes H$.
Then $x \in C$ or $x = w_C$ for some 4-cycle $C$ in $G \times H$. 
Similarly, $y \in D$ or $y = w_D$ for some 4-cycle $D$ in $G \times H$.
There are infinite paths $P$ in $G$ and $Q$ in $H$
such that $P \times Q$ contains both $C$ and $D$.
Indeed, consider the projections of $C$ and $D$ to $G$, 
which are edges. Since $G$ is a tree, 
we can choose $P$ as an infinite path in $G$ containing both of them.
Similarly for $Q$.
Now the subgraph $P \boxtimes Q$ of $G \boxtimes H$ coincides with (45-degree rotation of)
the product of two paths with the zigzag orientation 
as in Section~\ref{subsec:tree-grids}.
Thus, in $P \boxtimes Q$, 
the discrete midpoint points $x \bullet y$ and $x \circ y$ are defined. 
They are determined independently of the choice of $P$ and~$Q$.

Consider the $n$-product $(G \boxtimes H)^n$ of $G \boxtimes H$, 
which is called a {\em twisted tree-grid}.
Similarly to the previous case, an {\em L-convex function} 
on $(G \boxtimes H)^n$ is defined as
a function $g: (G \boxtimes H)^n \to \barRR$ satisfying 
\begin{equation}\label{eqn:midpoint2}
g(x) + g(y) \geq g(x \bullet y) + g( x \circ y) \quad (x,y \in (G \boxtimes H)^n),
\end{equation}
where operations $\bullet$ and $\circ$ are extended on $(G \boxtimes H)^n$ component-wise as before.
Again the following holds, where 
the proof is exactly the same as Lemma~\ref{lem:L-optimality1}.
\begin{Lem}\label{lem:L-optimality2} 
	Let $g: (G \boxtimes H)^n \to \overline{\RR}$ be an L-convex function.
	If $x \in \dom g$ is not a minimizer of $g$, there is $y \in I_x \cup F_x$ with $g(y) < g(x)$.  
\end{Lem}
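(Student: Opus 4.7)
The plan is to copy the proof of Lemma~\ref{lem:L-optimality1} essentially verbatim, since the only two ingredients used there --- the discrete midpoint inequality and the fact that the discrete midpoints of two far-apart points lie strictly between them --- are both available in the twisted tree-grid setting. Inequality (\ref{eqn:midpoint2}) supplies the first, and the definition of $\bullet, \circ$ via an ambient subgraph $P \boxtimes Q$ supplies the second, because inside $P \boxtimes Q$ we are in (a rotation of) the zigzag tree-grid of Section~\ref{subsec:tree-grids}.

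Concretely, I would assume $x \in \dom g$ is not a minimizer, pick $z \in \dom g$ with $g(z) < g(x)$ minimizing $\max_i d(x_i, z_i)$ among all such witnesses, and apply (\ref{eqn:midpoint2}) to $x$ and $z$ to obtain
\[
g(x \bullet z) + g(x \circ z) \leq g(x) + g(z) < 2 g(x),
\]
so that at least one of $x \bullet z, x \circ z$ has $g$-value strictly below $g(x)$. By construction, for each coordinate $i$ the points $(x \bullet z)_i$ and $(x \circ z)_i$ are the two ``middle'' vertices of the unique shortest $x_i$-$z_i$ path inside a subgraph $P_i \boxtimes Q_i$, so each is strictly closer to $x_i$ than $z_i$ is, unless $d(x_i, z_i)$ is already bounded by the diameter of the ``unit ball'' $I_{x_i} \cup F_{x_i}$ around $x_i$. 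Minimality of $z$ then forces $\max_i d(x_i, z_i)$ down to that unit value, in which regime inspection of the definitions of $\bullet$ and $\circ$ shows $x \bullet z \in I_x$ and $x \circ z \in F_x$; the one with smaller $g$-value is the required $y$.

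The main obstacle, and really the only content beyond Lemma~\ref{lem:L-optimality1}, is bookkeeping around the inserted vertices $w_C$ of $G \boxtimes H$: two comparable vertices can now be at graph distance $2$ (via an intervening $w_C$) rather than $1$, so the relevant ``unit ball'' around $x_i$ must be interpreted on this slightly larger scale. One must verify that when $d(x_i, z_i)$ exceeds this scale in some coordinate, both $(x \bullet z)_i$ and $(x \circ z)_i$ lie strictly inside the shortest $x_i$-$z_i$ path (giving a strict decrease in the componentwise max distance), and that when $d(x_i, z_i)$ is within the scale in every coordinate, the vertices $(x \bullet z)_i$ and $(x \circ z)_i$ are respectively $\preceq x_i$ and $\succeq x_i$. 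Both reductions are coordinatewise; each takes place inside $P_i \boxtimes Q_i$, which up to a $45$-degree rotation is an oriented tree-grid, so these facts follow from the very same case analysis used in Section~\ref{subsec:tree-grids}. Once they are in place, the final paragraph of the proof is identical, word for word, to that of Lemma~\ref{lem:L-optimality1}.
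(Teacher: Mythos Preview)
Your proposal is correct and follows exactly the paper's approach: the paper simply states that the proof is the same as that of Lemma~\ref{lem:L-optimality1}. Your concern about a larger ``scale'' for the neighborhood is unnecessary---inside each $P_i \boxtimes Q_i$ the midpoint argument drives $\max_i d(x_i,z_i)$ all the way down to $1$, exactly as in the tree-grid case, and at distance $\leq 1$ any two vertices of $G \boxtimes H$ are comparable, so $x \bullet z \in I_x$ and $x \circ z \in F_x$ without further bookkeeping.
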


As before,
we study localizations of 
an L-convex function on $(G \boxtimes H)^n$. 
This naturally leads to 
further generalizations of ${\pmb k}$-submodular functions.
For positive integers $k,l$, 
consider product $S_{k} \times S_l$ of 
$S_k$ and $S_l$. 
Define a partial order $\preceq'$ on $S_{k} \times S_l$
by $(0,0) \preceq' (a,b) \preceq' (a,0)$ and $(a,b) \preceq' (0,b)$ for $(a,b) \in S_{k} \times S_l$ with $a \neq 0$ and $b \neq 0$.
The resulting poset is denoted by $S_{k,l}$; 
note that $\preceq'$ is different from the direct product order.

The operations $\sqcup$ and $\sqcap$ on $S_{k,l}$ 
are defined as follows.
For $p,q \in S_{k,l}$, 
there are  
$a,a' \in S_k \setminus \{0\}$ and $b,b' \in S_{l} \setminus \{0\}$ 
such that $p,q \in \{0,a,a'\} \times \{0,b,b'\}$.
The restriction of $S_{k,l}$ to $\{0,a,a'\} \times \{0,b,b'\}$ is isomorphic to 
${S_2} \times S_2 = \{-1,0,1\}^2$ (with order $\preceq$), 
where an isomorphism $\varphi$ is given by
\begin{eqnarray*}
&& (0,0) \mapsto (0,0),\\ 
&& (a,b) \mapsto (1,0), (a',b) \mapsto (0,-1),
(a,b') \mapsto (0,1), (a',b') \mapsto (-1,0), \\
&& (a,0)  \mapsto (1,1), (a',0) \mapsto (-1,-1),
(0,b) \mapsto (1,-1), (0,b') \mapsto (-1,1).
\end{eqnarray*}
Then $p \sqcup q := \varphi^{-1}(\varphi(p) \sqcup \varphi(q))$ and $p \sqcap q := \varphi^{-1}(\varphi(p) \sqcap \varphi(q))$. 
Notice that they are  determined independently of the choice of 
$a,a',b,b'$ and $\varphi$.

For a pair of $n$-tuples $\pmb k = (k_1,k_2,\ldots,k_n)$ 
and $\pmb l = (l_1,l_2,\ldots,l_n)$, 
let $S_{\pmb k, \pmb l} := S_{k_1,l_1} \times S_{k_2,l_2} \times \cdots \times S_{k_n, l_n}$.
A function $f: {S_{\pmb k, \pmb l}} \to \barRR$ is 
{\em $(\pmb k, \pmb l)$-submodular} 
if it satisfies
\begin{equation}
f(x) + f(y) \geq f(x \sqcap y) + f(x \sqcup y) \quad (x,y \in {S_{\pmb k, \pmb l}}).
\end{equation}
Poset $S_{\pmb k, \pmb l}$ contains 
$S_{0,l} \simeq S_l$ and
{\em diamonds} (i.e., a modular lattice of rank 2) 
as $(\sqcap, \sqcup)$-closed sets. 
Thus $(\pmb k, \pmb l)$-submodular functions  can be viewed as a common generalization of  $\pmb k$-submodular functions and
submodular functions on diamonds.

Both the principal ideal and filter of each vertex in $(G \boxtimes H)^n$ 
are isomorphic to $S_{\pmb k, \pmb l}$ for some $\pmb k, \pmb l$, 
in which $\{  \bullet,\circ\}$ are equal to $\{\sqcap, \sqcup\}$.  
Thus we have:
\begin{Lem}\label{lem:local_2}
An L-convex function $g: (G \boxtimes H)^n \to \overline{\RR}$ is 
$(\pmb k, \pmb l)$-submodular on $I_x$ and on $F_x$ for every $x \in \dom g$	
\end{Lem}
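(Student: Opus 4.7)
My plan is to take the key structural assertion immediately preceding the lemma---that every principal filter $F_x$ and ideal $I_x$ in $(G \boxtimes H)^n$ is isomorphic, as a set carrying the operators $\bullet, \circ$, to some $S_{\pmb k, \pmb l}$ with $\{\bullet, \circ\} = \{\sqcap, \sqcup\}$---as the real content, and reduce the lemma itself to a one-line application of L-convexity. The proof thus splits into two parts: verifying the operator identification, and then invoking (\ref{eqn:midpoint2}).

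For the operator identification I would argue factor by factor. Since $\bullet, \circ$ and $\sqcap, \sqcup$ all act componentwise, it suffices to work in a single $G \boxtimes H$. I would first describe the three layers of $F_{x_i}$ concretely: the vertex $x_i$ itself at the bottom, the $w_C$ with $C \ni x_i$ in the middle, and the white vertices sitting above them on top, matching them respectively to $(0,0)$, the $k_i l_i$ mid-level elements $(a,b)$, and the $k_i + l_i$ top-level elements $(a,0), (0,b)$ of $S_{k_i, l_i}$ (where $k_i, l_i$ are read off from the relevant vertex degrees; separate but analogous descriptions are needed when $x_i$ is white or is itself a vertex $w_C$, and for $I_{x_i}$). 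Then, for arbitrary $y, z \in F_{x_i}$, I would choose infinite paths $P \subseteq G$ and $Q \subseteq H$ such that $P \times Q$ contains 4-cycles witnessing both $y$ and $z$ (possible because $G, H$ have no leaves). Inside $P \boxtimes Q$---a $45$-degree rotation of a zigzag tree-grid---the operators $\bullet, \circ$ are those of Section~\ref{subsec:tree-grids}, and a finite case analysis, splitting on whether $y, z$ are mid- or top-level and whether they share a $G$- or $H$-coordinate, shows that they agree with $\sqcap, \sqcup$ pulled back through the map $\varphi$ of Section~\ref{subsec:skew}. The independence of $\bullet, \circ$ from the choice of $(P, Q)$ guarantees well-definedness.

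Once this identification is in place, the lemma is immediate. For any $y, z \in F_x$, L-convexity (\ref{eqn:midpoint2}) together with the identification gives
\[
g(y) + g(z) \;\geq\; g(y \bullet z) + g(y \circ z) \;=\; g(y \sqcap z) + g(y \sqcup z),
\]
which is precisely $(\pmb k, \pmb l)$-submodularity of $g|_{F_x}$. The same reasoning with $I_x$ in place of $F_x$ (and $\sqcap, \sqcup$ possibly swapped) settles the ideal case.

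The hard step is the combinatorial case analysis in the operator identification. Unlike in the tree-grid setting, the order $\preceq'$ on $S_{k,l}$ is not the product order on $S_k \times S_l$---the ``twist'' $(a,b) \preceq' (0,b)$ is non-obvious---and $\sqcap, \sqcup$ are defined only through the pullback $\varphi$ from $\{-1,0,1\}^2$. Showing that the auxiliary-path definition of $\bullet, \circ$ in the twisted graph $G \boxtimes H$ reproduces exactly this twisted lattice on every $F_x$ and $I_x$ is where the substantive content of the lemma lives; the rest is bookkeeping.
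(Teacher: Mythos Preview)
Your proposal is correct and follows exactly the paper's approach: the paper simply asserts, just before the lemma, that each $I_x$ and $F_x$ is isomorphic to some $S_{\pmb k,\pmb l}$ with $\{\bullet,\circ\}=\{\sqcap,\sqcup\}$, and then writes ``Thus we have:'' with no further argument. You have supplied the combinatorial verification of that structural assertion which the paper leaves to the reader, and then drawn the same one-line consequence from~(\ref{eqn:midpoint2}).
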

\subsection{Steepest descent algorithm}\label{subsec:SDA}
The two classes of L-convex functions  in the previous subsection
can be minimized by the same principle,  
analogous to the steepest descent algorithm for L$^\natural$-convex functions.
Let $\Gamma$ be a tree-product $G^n$ or 
a twisted tree-product $(G \boxtimes H)^n$.
We here consider L-convex functions $g$ 
on $\Gamma$ such that a minimizer of $g$ exists.
\begin{description}
	\item[{\bf Steepest Descent Algorithm (SDA)}]
	\item[Input:] An L-convex function $g: \Gamma  \to \overline\RR$ and an initial point $x \in \dom g$.
	\item[Output:] A minimizer $x$ of $g$.
	\item[Step 1:] Find a minimizer $y$ of $g$ 
	over $I_{x} \cup F_{x}$.
	\item[Step 2:] If $g(x) = g(y)$, then output $x$ and stop; $x$ is a minimizer.
	\item[Step 3:] Let $x := y$, and go to step 1.
\end{description}
The correctness of this algorithm follows immediately from
Lemmas~\ref{lem:L-optimality1} and \ref{lem:L-optimality2}.
A minimizer $y$ in Step 1 is particularly called a {\em steepest direction} at $x$. Notice that a steepest direction 
is obtained by minimizing $g$ over $I_x$ and over $F_x$, 
which are $\pmb k$- or $(\pmb k,\pmb l)$-submodular function minimization by Lemmas~\ref{lem:local_1} and \ref{lem:local_2}.

Besides its conceptual simplicity,  
there remain two issues in applying SDA to specific combinatorial optimization problems:
\begin{itemize}
	\item How to minimize $g$ over $I_x$ and over $F_x$. 
	\item How to estimate the number of iterations.
\end{itemize}
We first discuss the second issue.
In fact, there is a surprisingly simple and sharp iteration bound, 
analogous to the case of 
L$^\natural$-convex functions~\cite{MurotaShioura14}.  
If $\Gamma$ is a tree-grid $G^n$, 
then define $\Gamma^{\Delta}$ as 
the graph obtained from $\Gamma$ by adding an edge
to each pair of (distinct) vertices $x,y$ 
with $d(x_i,y_i) \leq 1$ for $i=1,2,\ldots,n$.
If $\Gamma$ is a twisted tree-grid $(G \boxtimes H)^n$, 
then define $\Gamma^{\Delta}$ as 
the graph obtained from $\Gamma$ by adding an edge to each pair of
(distinct) vertices $x,y$ such that 
$x_i$ and $y_i$ 
belong to a common $4$-cycle in $G \boxtimes H$ for each $i=1,2,\ldots,n$. 
Let $d_{\Delta} := d_{\Gamma^{\Delta}}$.
Observe that $x$ and $y \in I_{x} \cup F_x$ 
are adjacent in $\Gamma^{\Delta}$.
Hence the number of iterations of SDA is at least 
the minimum distance 
$d_{\Delta}(x, \opt(g)) := \min \{ d_{\Delta}(x,y) \mid y \in \opt(g)\}$
from the initial point $x$ 
and the minimizer set ${\rm opt}(g)$ of $g$.
This lower bound is almost tight. 
\begin{Thm}[\cite{HH14extendable,HH15node_multi}]\label{thm:l_inf_bound}
	The number of the iterations of SDA 
	applied to L-convex function $g$ and initial point $x \in \dom g$ 
	is 
	at most $d_{\Delta}(x, {\rm opt}(g)) + 2$.
\end{Thm}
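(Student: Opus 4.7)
The plan is to exhibit, alongside the SDA iterates $x^{(0)}=x, x^{(1)}, x^{(2)}, \ldots$, a \emph{pursuing} sequence of minimizers $y^{(0)}, y^{(1)}, \ldots \in \opt(g)$ with the invariant that $d_\Delta(x^{(k)}, y^{(k)})$ decreases by at least one per iteration until it reaches zero. Initializing $y^{(0)}$ so that $d_\Delta(x^{(0)}, y^{(0)}) = D := d_\Delta(x, \opt(g))$, this yields $x^{(D)} \in \opt(g)$; at most one further pass through Steps 1--3 then detects optimality (i.e.\ returns $y = x^{(D)}$ in Step 1 and stops in Step 2). The additional $+1$ of slack in the $D+2$ bound absorbs a single iteration in which the tracked distance can fail to decrease, which I expect to occur at the very first step when the pursuer $y^{(0)}$ need not be aligned with the steepest direction actually chosen from $x^{(0)}$.

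For the inductive step, given $y^{(k)}$, I would apply the midpoint inequality~(\ref{eqn:midpoint}) or~(\ref{eqn:midpoint2}) to the pair $(x^{(k+1)}, y^{(k)})$, yielding
\[
g(x^{(k+1)} \bullet y^{(k)}) + g(x^{(k+1)} \circ y^{(k)}) \le g(x^{(k+1)}) + g(y^{(k)}),
\]
and attempt to set $y^{(k+1)}$ equal to whichever of these two midpoints is a minimizer of $g$. The distance bookkeeping is the easy part: by the coordinatewise definition of $\bullet$ and $\circ$, both midpoints lie on geodesics between $x^{(k+1)}$ and $y^{(k)}$, and in every coordinate where SDA made a unit move from $x^{(k)}$ to $x^{(k+1)}$ the coordinatewise distance to $x^{(k+1)}$ drops by one, giving $d_\Delta(x^{(k+1)}, y^{(k+1)}) \le d_\Delta(x^{(k)}, y^{(k)}) - 1$ once $y^{(k+1)}$ is chosen.

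The main obstacle is verifying that at least one of $x^{(k+1)} \bullet y^{(k)}$ and $x^{(k+1)} \circ y^{(k)}$ actually lies in $\opt(g)$, rather than merely satisfying $g(\cdot) \le g(x^{(k+1)})$ as the raw midpoint inequality provides. To upgrade the conclusion I would exploit the steepest-direction property -- that $x^{(k+1)}$ minimizes $g$ over $I_{x^{(k)}} \cup F_{x^{(k)}}$ -- together with the $\pmb k$- (or $(\pmb k,\pmb l)$-) submodularity of the localizations supplied by Lemmas~\ref{lem:local_1} and~\ref{lem:local_2}. Heuristically, if neither midpoint were optimal, a local submodular-exchange argument inside the appropriate principal ideal or filter at $x^{(k)}$ ought to expose a point of strictly smaller $g$-value in $I_{x^{(k)}} \cup F_{x^{(k)}}$, contradicting the choice of $x^{(k+1)}$. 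Executing this exchange cleanly, and handling the two symmetric cases $x^{(k+1)} \in I_{x^{(k)}}$ versus $x^{(k+1)} \in F_{x^{(k)}}$, is where the bulk of the technical work lies; it is also the step that forces the twisted tree-grid setting of Section~\ref{subsec:skew} to be treated in parallel with the tree-grid case rather than being deducible from it.
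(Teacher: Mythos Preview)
The paper itself does not prove Theorem~\ref{thm:l_inf_bound}; it only cites \cite{HH14extendable,HH15node_multi}. So there is no in-paper argument to compare against. That said, your plan has a structural flaw worth naming.

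The operation $(p,q)\mapsto(p\bullet q,\,p\circ q)$ sends each coordinate to the \emph{middle} of the $p_i$--$q_i$ path: if $d(p_i,q_i)=d_i$ then the $i$-th coordinates of $p\bullet q$ and $p\circ q$ sit at distances $\lfloor d_i/2\rfloor$ and $\lceil d_i/2\rceil$ from $p_i$, not at distance $d_i-1$. Hence your sentence ``the coordinatewise distance to $x^{(k+1)}$ drops by one'' is incorrect; setting $y^{(k+1)}$ to a midpoint of $(x^{(k+1)},y^{(k)})$ lands roughly halfway, not one step from $y^{(k)}$. If such a midpoint were always a minimizer you would actually get an $O(\log D)$ iteration bound, which is too strong and should already be a warning sign. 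And indeed it need not be a minimizer: with $q=y^{(k)}\in\opt(g)$ and $p=x^{(k+1)}\notin\opt(g)$, the midpoint inequality only gives
\[
g(p\bullet q)+g(p\circ q)\ \le\ g(x^{(k+1)})+\min g,
\]
so at best one midpoint has $g$-value at most $g(x^{(k+1)})>\min g$. Your ``main obstacle'' is therefore not a detail to be filled in; it is a sign that the midpoint inequality has been applied to the wrong pair of points, and no amount of local-submodularity exchange inside $I_{x^{(k)}}\cup F_{x^{(k)}}$ will force a point at $d_\Delta$-distance $\approx D/2$ to be optimal.

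The approach in the cited references, following Murota--Shioura, keeps the pursuer moving one step at a time: one first takes a point $z$ adjacent to $y^{(k)}$ (in $\Gamma^\Delta$) lying on a $d_\Delta$-geodesic toward $x^{(k)}$, and then uses the steepest-direction property of $x^{(k+1)}$ together with the local submodularity of Lemmas~\ref{lem:local_1}/\ref{lem:local_2} to compare $x^{(k+1)}$ with this one-step neighbour. Because both points now live in a single principal ideal or filter, the submodular exchange genuinely produces a minimizer at distance $D-1$ from $x^{(k+1)}$. Your instinct to invoke those lemmas is right; the pair to which you apply them needs to change.
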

For specific problems (considered in Section~\ref{sec:applications}), 
the upper bound of $d_{\Delta}(x, {\rm opt}(g))$ 
is relatively easier to be estimated.

Thus we concentrate only on the first issue.
Minimizing $g$ over $I_x$ 
and $F_x$ is a $\pmb k$-submodular function minimization if $\Gamma$ is a tree-grid 
and is a $(\pmb k, \pmb l)$-submodular function minimization 
if $\Gamma$ is a twisted tree-grid.  
Currently no polynomial time algorithm 
is known for $\pmb k$-submodular function minimization 
under the oracle model.
%
However, under several important special cases 
(including VCSP model), 
the above submodular functions 
can be minimizable in polynomial time, 
and SDA is implementable; see Section~\ref{sec:oriented_modular}.
Moreover, for further special classes of $\pmb k$- or 
$({\pmb k}, {\pmb l})$-submodular functions
arising from our target problems in Section~\ref{sec:applications}, 
a fast and efficient minimization 
via network or submodular flows is possible, 
and hence the SDA framework
brings efficient combinatorial polynomial time algorithms.

\subsection{L-convex function on Euclidean building}\label{subsec:building}
The arguments in the previous sections 
are naturally generalized to 
the structures known as 
{\em spherical} and {\em Euclidean buildings of type C}; see~\cite{BuildingBook,TitsBuilding} for the theory of buildings. 
We here explain a building-theoretic approach 
to L-convexity and submodularity. 
We start with a local theory; we introduce a {\em polar space} and  
{\em submodular functions} on it. 
Polar spaces are equivalent to spherical buildings of type C~\cite{TitsBuilding}, 
and generalize domains
 $S_{\pmb k}$ and $S_{\pmb k, \pmb l}$ 
for $\pmb  k$-submodular and $(\pmb k, \pmb l)$-submodular functions.

A polar space $L$ of rank $n$ is defined as 
a poset endowed with a system of subposets, called {\em polar frames}, 
satisfying the following axioms:  
\begin{itemize}
	\item[P0:]\ Each polar frame is isomorphic to ${S_2}^n$.
	\item[P1:] 
	\ For two chains $C,D$ in $L$, 
	there is a polar frame $F$ containing them.
	\item[P2:]\ If polar frames $F,F'$ 
	both contain two chains $C,D$, then there is an isomorphism $F \to F'$ being identity on $C$ and $D$.
\end{itemize}
Namely a polar space is viewed as an amalgamation of several ${S_2}^n$.
Observe that $S_{k}$ and $S_{k, l}$ (with $k,l \geq 2$) are polar spaces, 
and so are their products.

Operators $\sqcap$ and $\sqcup$ on polar space $L$ are defined as follows.
For $x,y \in L$, consider a polar frame $F \simeq {S_2}^n$ containing $x,y$ (via P1), and
$x \sqcap y$ and $x \sqcup y$ can be defined in $F$. 
One can derive from the axioms that $x \sqcap y$ and $x \sqcup y$ are determined independently of the choice of a polar frame $F$. 
Thus operators $\sqcap$ and $\sqcup$ are well-defined.

A {\em submodular function} on a polar space $L$ is a function $f:L \to \RR$ satisfying
\begin{equation}\label{eqn:submo_polar1}
	f(x) + f(y) \geq f(x \sqcap y) + f(x \sqcup y)  \quad  (x,y \in L).
\end{equation}
Equivalently, a submodular function on a polar space
is a function being bisubmodular on each polar frame.

Next we introduce a {\em Euclidean building (of type C)} and {\em L-convex functions} on it.
A Euclidean building is simply defined from the above axioms by replacing 
${S_2}^n$ by $\ZZ^n$, 
where $\ZZ$ is zigzagly ordered as
\[
\cdots \succ -2 \prec -1 \succ 0 \prec 1 \succ 2 \prec \cdots.
\]
Namely a Euclidean building of rank $n$ is a poset $\Gamma$ endowed 
with a system of subposets, called {\em apartments}, satisfying:
\begin{itemize}
	\item[B0:] \ \ Each apartment is isomorphic to $\ZZ^n$.
	\item[B1:] \ \ For any two chains $A, B$ in $\Gamma$,
	there is an apartment $\Sigma$ containing them.
	\item[B2:] \ \ If $\Sigma$ and $\Sigma'$ are apartments containing two chains $A, B$,
	then there is an isomorphism $\Sigma \to \Sigma'$ being identity on $A$ and $B$.
\end{itemize}
A tree-product $G^n$ and twisted tree-product $(G \boxtimes H)^n$ are Euclidean buildings (of rank $n$ and $2n$, respectively).
In the latter case,  
apartments are given by 
$(P_1 \boxtimes Q_1) \times (P_2 \boxtimes Q_2) \times \cdots \times (P_n \boxtimes Q_n) \simeq \ZZ^{2n}$ 
for infinite paths $P_i$ in $G$ and $Q_i$ in $H$ 
for $i=1,2,\ldots,n$.

The discrete midpoint operators are defined as follows.
For two vertices $x,y \in \Gamma$, choose an apartment $\Sigma$ containing $x,y$.
Via isomorphism $\Sigma \simeq \ZZ^n$, 
discrete midpoints $x \bullet y$  and $x \circ y$ 
are defined as in the previous section.
Again, $x \bullet y$ and $x \circ y$ and 
independent of the choice of apartments. 

An {\em L-convex function} on $\Gamma$ is a function $g:\Gamma \to \overline{\RR}$ 
satisfying
\begin{equation}\label{eqn:midpoint_building}
	g(x) + g(y) \geq g(x \bullet y) + g( x \circ y) \quad (x,y \in \Gamma).
\end{equation}
Observe that each principal ideal 
and filter of $\Gamma$ are polar spaces. 
Then the previous Lemmas~\ref{lem:L-optimality1}, 
\ref{lem:local_1}, \ref{lem:L-optimality2}, \ref{lem:local_2}, and Theorem~\ref{thm:l_inf_bound} 
are generalized as follows:
\begin{itemize}
	\item An L-convex function $g:\Gamma \to \overline{\RR}$ is submodular on polar spaces $I_x$ and $F_x$ 
	for every $x \in \Gamma$.
	\item If $x$ is not a minimizer of $g$, then there is $y \in I_x \cup F_x$ with $g(y) < g(x)$. In particular, the steepest descent algorithm (SDA) is well-defined, and correctly obtains a minimizer of $g$ (if it exists).
	\item The number of iterations of SDA for $g$ and initial point $x \in \dom g$ 
	is bounded by $d_{\Delta}(x,\opt (g))+2$, where 
	$d_{\Delta}(y,z)$ is the $l_{\infty}$-distance between $y$ and $z$ 
	in the apartment $\Sigma \simeq \ZZ^n$ containing $y,z$.
\end{itemize}

Next we discuss a convexity aspect of L-convex functions.
Analogously to 
$\ZZ^n \hookrightarrow \RR^n$, there is a continuous metric space $K(\Gamma)$ 
into which $\Gamma$ is embedded.
Accordingly, any function $g$ on $\Gamma$ is extended 
to a function $\bar g$ on $K(\Gamma)$, which is 
an analogue of the Lov\'asz extension.
It turns out that the L-convexity of $g$ is equivalent to 
the convexity of $\bar g$ with respect to the metric on $K(\Gamma)$.

We explain this fact more precisely.
Let $K(\Gamma)$ be the set of 
formal sums
\[
\sum_{p \in \Gamma} \lambda(p) p
\]
of vertices in $\Gamma$ 
for $\lambda: \Gamma \to \RR_+$ satisfying 
that $\{ p \in \Gamma \mid \lambda(p) \neq 0 \}$ is a chain 
and $\sum_{p} \lambda(p) = 1$.
For a chain $C$, the subset of form $\sum_{p \in C} \lambda(p) p$ 
is called a {\em simplex}.
For a function $g: \Gamma \to \overline{\RR}$, the {\em Lov\'asz extension} $g: K(\Gamma) \to \overline{\RR}$ is defined by
\begin{equation}
	\bar g(x) := \sum_{p \in \Gamma} \lambda(p) g(p) \quad 
	\left( x = \sum_{p \in \Gamma} \lambda(p) p \in K(\Gamma) \right).
\end{equation}
The space $K(\Gamma)$ is endowed with a natural metric.
For an apartment $\Sigma$, 
the subcomplex $K(\Sigma)$ is isomorphic to 
a simplicial subdivision of $\RR^n$ into simplices 
of vertices 
\[
z, \quad z + s_{i_1}, \quad z + s_{i_1} + s_{i_2},\quad  \ldots,\quad z + s_{i_1} + s_{i_2} + \cdots + s_{i_n}
\]
for all even integer vectors $z$, permutations $(i_1,i_2,\ldots,i_n)$ of $\{1,2,\ldots,n\}$ and $s_{i} \in \{e_i, - e_i \}$ for $i=1,2,\ldots,n$, where 
$e_i$ denotes the $i$-th unit vector.
Therefore one can metrize $K(\Gamma)$ 
so that, for each apartment $\Sigma$, 
the subcomplex $K(\Sigma)$ is an isometric subspace of $K(\Gamma)$ and is isometric to the Euclidean space $(\RR^n, l_2)$.
Figure~\ref{fig:KGXH} illustrates the space $K(\Gamma)$ for twisted tree-grid $\Gamma = G \boxtimes H$.
\begin{figure}[t]
	\begin{center}
		\includegraphics[scale=0.8]{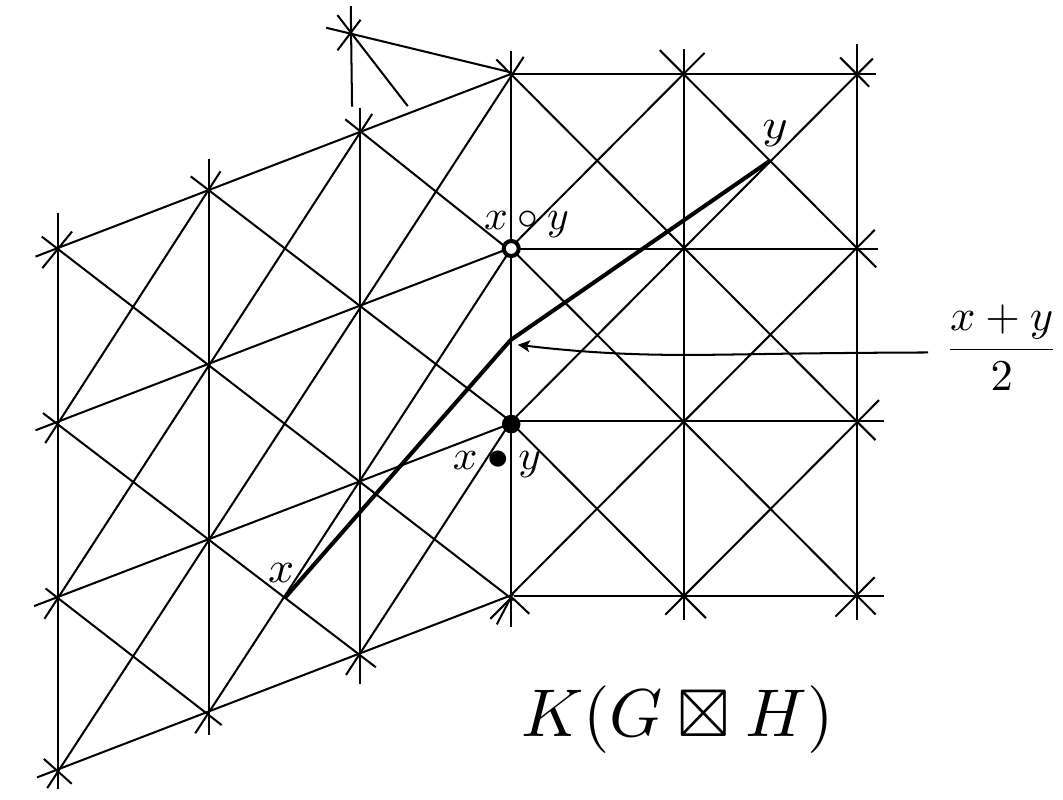}
		\caption{The space $K(G \boxtimes H)$, which is constructed 
			from $G \boxtimes H$
			by filling Euclidean right triangle to each chain of length two.
			The Lov{\'a}sz extension is 
			a piecewise interpolation with respect to this triangulation.
			Each apartment is isometric to the Euclidean plane, 
			in which geodesics are line segments.
			The midpoint $(x+y)/2$ of vertices $x,y$ lies on 
			an edge ($1$-dimensional simplex).
			Discrete midpoints $x \circ y$ and $x \bullet y$
			are obtained by rounding $(x+y)/2$ to the ends of the edge
			with respect to the partial order.
		}
		\label{fig:KGXH}
	\end{center}
\end{figure}\noindent
This metric space $K(\Gamma)$ is known as 
the {\em standard geometric realization} of $\Gamma$; see \cite[Chapter 11]{BuildingBook}.
It is known that $K(\Gamma)$ is a {\em CAT(0) space} (see \cite{BrHa}), 
and hence uniquely geodesic, i.e., every pair of points can be joined 
by a unique geodesic (shortest path).
The unique geodesic for two points $x,y$ is given as follows.
Consider an apartment $\Sigma$ with $x,y \in K(\Sigma)$, and
identify $K(\Sigma)$ with $\RR^n$ and $x,y$ with points in $\RR^n$.
Then the line segment $\{ \lambda x + (1 - \lambda) y \mid 0 \leq \lambda \leq 1\}$
in $\RR^n \simeq K(\Sigma) \subseteq K(\Gamma)$ 
is the geodesic between $x$ and $y$.  
In particular, a convex function on $K(\Gamma)$ 
is defined as a function $g:K(\Gamma) \to \overline{\RR}$ satisfying 
\begin{equation}
	\lambda g(x) + (1 - \lambda) g(y) \geq g(\lambda x + (1- \lambda)y) 
	\quad (x,y \in K(\Gamma), 0 \leq \lambda \leq 1),
\end{equation} 
where point $\lambda x + (1- \lambda)y$ is considered in the apartment $\RR^n \simeq K(\Sigma)$.
Notice that the above $( x \bullet y, x \circ y)$ 
is equal to the unique pair $(z,z')$ with the property that 
$z \preceq z'$ and $(x + y)/2 = (z+ z')/2$ 
(in $\RR^n \simeq K(\Sigma)$).
In this setting, the L-convexity is characterized as follows.
\begin{Thm}[\cite{HH16L-convex}]\label{thm:convex}
	Let $\Gamma$ be a Euclidean building of type C.
	For $g: \Gamma \to \overline\RR$, the following conditions are equivalent:
	\begin{itemize}
		\item[{\rm (1)}] \ $g$ is L-convex.
		\item[{\rm (2)}] \ The Lov\'asz extension $\bar g$ of $g$ is convex.
		\item[{\rm (3)}] 
		\ $g$ is submodular on $I_x$ and on $F_x$ for every $x \in \dom g$, and $\dom g$ is chain-connected.
	\end{itemize}
\end{Thm}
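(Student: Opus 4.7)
The plan is to close a cyclic loop of implications (2)$\Rightarrow$(1)$\Rightarrow$(3)$\Rightarrow$(2), with the bulk of the work in the last step. The first two implications are essentially bookkeeping using structure already set up in the excerpt; the third is where the CAT(0) geometry of $K(\Gamma)$ really does the heavy lifting.

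For (2)$\Rightarrow$(1), I would fix $x,y \in \Gamma$ and use axiom B1 to pick an apartment $\Sigma$ with $K(\Sigma)\simeq \RR^n$ isometrically embedded. The Euclidean midpoint $(x+y)/2$ lies on a $1$-simplex of the standard subdivision whose two endpoints are precisely $x\bullet y$ and $x\circ y$ (this is exactly the picture in Figure \ref{fig:KGXH}), so by definition of the Lov\'asz extension $\bar g((x+y)/2) = \tfrac12 g(x\bullet y)+\tfrac12 g(x\circ y)$. Applying convexity of $\bar g$ to the Euclidean segment $[x,y]$, which is the unique CAT(0) geodesic, yields (\ref{eqn:midpoint_building}). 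For (1)$\Rightarrow$(3), submodularity on each $I_x$ and $F_x$ is immediate from (\ref{eqn:midpoint_building}) because $\{\bullet,\circ\}$ agrees with $\{\sqcap,\sqcup\}$ on principal ideals and filters, as already noted before Lemma~\ref{lem:local_2}. For chain-connectedness, an iterated midpoint argument works: given $x,y \in \dom g$, both discrete midpoints lie in $\dom g$ and form a comparable pair, and inside a fixed apartment the quantity $\max_i d(x_i,y_i)$ strictly decreases; recursing on each half produces a sequence of comparable steps in $\dom g$ joining $x$ to $y$.

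For (3)$\Rightarrow$(2), I would first reduce global convexity on $K(\Gamma)$ to convexity on a single apartment. In a CAT(0) space, convexity of $\bar g$ is equivalent to convexity along every geodesic; by axiom B1 applied to the carrying simplices of any two points of $K(\Gamma)$, each geodesic lies entirely inside some $K(\Sigma)$, whose metric is the Euclidean $l_2$. Hence it suffices to prove that $\bar g|_{K(\Sigma)}$ is a convex piecewise-linear function on $\RR^n$ for every apartment $\Sigma$. Chain-connectedness of $\dom g$ is then used to ensure $\dom \bar g \cap K(\Sigma)$ is a connected union of simplices (so convexity is not vacuous on a disconnected domain), and that the convexity estimate on each apartment can be patched consistently across apartments sharing a common chain.

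The main obstacle, and the technical core of the theorem, is showing this PL-convexity on each apartment purely from the hypothesis that $g$ is submodular on every $I_z$ and $F_z$. Since $\bar g$ is affine on each $n$-simplex of the subdivision described before the theorem, PL-convexity amounts to a single second-difference inequality across every codimension-$1$ face. My plan is to classify these faces: crossing such a face corresponds to swapping two consecutive steps $s_{i_j},s_{i_{j+1}}$ in the chain labeling of the simplex, which in turn compares the values of $g$ at four vertices $a,b,a\sqcap b,a\sqcup b$ all lying in a common polar space (namely the filter $F_z$ or ideal $I_z$ of their common extremum in the apartment, which is itself a polar space inside $\Gamma$ by the excerpt's remarks). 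The second-difference inequality then reduces exactly to the bisubmodular inequality (\ref{eqn:submo_polar1}), supplied by (3). Once PL-convexity is established on each apartment, the CAT(0) gluing argument above completes the proof; chain-connectedness of $\dom g$ is invoked a final time to ensure that the domain of $\bar g$ is itself geodesically convex in $K(\Gamma)$, so that the pointwise convexity inequality makes sense globally.
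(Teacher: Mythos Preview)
The paper does not actually prove this theorem; it is stated with a citation to \cite{HH16L-convex}, so there is no in-paper proof to compare against. Your overall architecture (the cycle (2)$\Rightarrow$(1)$\Rightarrow$(3)$\Rightarrow$(2), reduction to a single apartment via B1, PL-convexity checked across codimension-$1$ faces) is the natural one and your arguments for (2)$\Rightarrow$(1) and (1)$\Rightarrow$(3) are essentially correct.

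There is, however, a genuine gap in your (3)$\Rightarrow$(2). You reduce to showing that $\bar g|_{K(\Sigma)}$ is a convex PL function on $\RR^n$, and then verify the second-difference inequality across interior facets. But PL-convexity on $\RR^n$ also requires that $\dom \bar g \cap K(\Sigma)$ be a \emph{convex} set, not merely connected. You invoke chain-connectedness twice---once for connectedness, once to ``ensure that the domain of $\bar g$ is itself geodesically convex''---but never explain the mechanism. Chain-connectedness by itself gives only path-connectedness; convexity of $\dom \bar g$ has to come from the \emph{local} submodularity hypothesis, which forces $\dom g \cap I_x$ and $\dom g \cap F_x$ to be closed under $\sqcap$ and $\sqcup$ (hence their realizations are locally convex cones), and chain-connectedness is then what lets you propagate this local convexity to a global statement. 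That propagation is the nontrivial step, and you have not supplied it.

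A smaller imprecision: your description of facet crossings as ``swapping two consecutive steps $s_{i_j},s_{i_{j+1}}$'' only covers the $n-1$ interior walls of an alcove in $\tilde C_n$. There are two further wall types, obtained by dropping the bottom vertex $z$ or the top vertex $z+s_{i_1}+\cdots+s_{i_n}$ of the chain; these replace $z$ by $z+2s_{i_1}$ (respectively the top by its reflection), and the resulting second-difference inequality is $g(z)+g(z+2s_{i_1})\geq 2g(z+s_{i_1})$, i.e.\ the genuinely bisubmodular inequality in the polar space $I_{z+s_{i_1}}$ where $z \sqcap (z+2s_{i_1}) = z \sqcup (z+2s_{i_1}) = z+s_{i_1}$. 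Your use of $\sqcap,\sqcup$ notation suggests you anticipated this, but the text as written does not cover these walls.
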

Here a subset $X$ of vertices in $\Gamma$ is {\em chain-connected} if 
every $p,q \in X$ there is a sequence $p = p_0,p_1,\ldots,p_k = q$ 
such that $p_i \preceq p_{i+1}$ or $p_i \succeq p_{i+1}$.
\begin{Rem}{\rm 
	A Euclidean building $\Gamma$ consisting of 
	a single apartment is identified with $\ZZ^n$, and 
	$K(\Gamma)$ is a simplicial subdivision of $\RR^n$.
	Then L-convex functions on $\Gamma$ as defined here coincide with 
	 {\em UJ-convex functions} by Fujishige~\cite{Fujishige14}, 
	where he defined them via  
	the convexity property (2) in Theorem~\ref{thm:convex}.}
\end{Rem}

\section{Application}\label{sec:applications}
In this section, we demonstrate SDA-based algorithm design for two important multiflow problems 
from which L-convex functions above actually arise.
The first one is the minimum-cost multiflow problem, and the second is the node-capacitated multiflow problem.
Both problems also arise as (the dual of) LP-relaxations of 
other important network optimization problems, and are connected to good approximation algorithms.
The SDA framework brings fast combinatorial polynomial time algorithms 
to both problems for which such algorithms had not been known before. 

We will use standard notation for directed and undirected networks. 
For a graph $(V,E)$ and a node subset $X \subseteq V$,  
let $\delta(X)$ denote the set of edges $ij$ with $|\{i,j\} \cap X| = 1$.
In the case where $G$ is directed,
let $\delta^+(X)$ and $\delta^-(X)$ denote 
the sets of edges leaving $X$ and entering $X$, respectively.
Here $\delta(\{i\})$, $\delta^+(\{i\})$ and $\delta^-(\{i\})$ are 
simply denoted by $\delta(i)$, $\delta^+(i)$ and $\delta^-(i)$, respectively.
For a function $h$ on a set $V$ and a subset $X \subseteq V$, 
let $h(X)$ denote $\sum_{i \in X} h(i)$. 

\subsection{Minimum-cost multiflow and terminal backup}\label{sub:backup}

\subsubsection{Problem formulation}
An {\em undirected network} $N = (V, E,c, S)$ consists of an undirected graph $(V, E)$, 
an edge-capacity $c:E \to \ZZ_+$, and a specified set $S \subseteq V$ of nodes, called {\em terminals}.
Let $n := |V|$, $m := |E|$, and $k := |S|$.
An {\em $S$-path} is a path connecting distinct terminals in $S$.
A {\em multiflow} is a pair $({\cal P},f)$ of  a set ${\cal P}$ of $S$-paths
and a flow-value function $f:{\cal P} \to \RR_+$ 
satisfying the capacity constraint:
\begin{equation}
f(e) := \sum \{  f(P) \mid P \in {\cal P}: \mbox{$P$ contains $e$} \} \leq c(e) \quad (e \in E).
\end{equation}
A multiflow $({\cal P}, f)$ is simply written as $f$.
The {\em support} of a multiflow $f$ is the edge-weight 
defined by $e \mapsto f(e)$. 
Suppose further that  the network $N$ is given 
 a nonnegative edge-cost $a:E \to \ZZ_+$ and a node-demand $r:S \to \ZZ_+$ on the terminal set $S$.
The {\em cost} $a(f)$ of a multiflow $f$ is defined as 
$\sum_{e \in E} a(e)f(e)$.
A multiflow $f$ is said to be {\em $r$-feasible} 
if it satisfies 
\begin{equation}
\sum \{ f(P) \mid \mbox{$P \in {\cal P}$: $P$ connects $s$}\} \geq r(s) \quad (s \in S).
\end{equation}
Namely each terminal $s$ 
is connected to other terminals in at least $r(s)$ flows.
The {\em minimum-cost node-demand multiflow problem 
(MCMF)} asks to find 
an $r$-feasible multiflow of minimum cost.

This problem was 
recently introduced by Fukunaga~\cite{Fukunaga14} 
as an LP-relaxation of the following network design problem. 
An edge-weight $u: E \to \RR_+$ is said to be {\em $r$-feasible} 
if $0 \leq u \leq c$ and 
the network $(V,E,u,S)$
has an $(s, S \setminus\{s\})$-flow 
of value at least $r(s)$ for each $s \in S$.
The latter condition is represented as the following cut-covering constraint:
\begin{equation}\label{eqn:cutcover}
u(\delta(X)) \geq r(s) \quad (s \in S, X \subseteq V: X \cap S = \{s\}).
\end{equation}
The (capacitated) {\em terminal backup problem} (TB) 
asks to find 
an integer-valued $r$-feasible edge-weight $u:E \to \ZZ_+$ of minimum-cost $\sum_{e \in E} a(e) u(e)$.
This problem, introduced by 
Anshelevich and Karagiozova~\cite{AK11}, 
was shown to be polynomially solvable~\cite{AK11,BKM15} 
if there is no capacity bound.
The complexity of TB for the general capacitated case is not known.
The natural fractional relaxation,  called 
the {\em fractional terminal backup problem (FTB)}, 
is obtained by relaxing $u: E \to \ZZ_+$ to $u : E \to \RR_+$.
In fact, MCMF and FTB are equivalent in the following sense:
\begin{Lem} [{\cite{Fukunaga14}}] \label{lem:NvsL}
	\begin{itemize}
		\item[{\rm (1)}] \ For an optimal solution $f$ of MCMF, 
		the support of $f$ is an optimal solution of FTB.
		\item[{\rm (2)}] \ For an optimal solution $u$ of FTB, 
		an $r$-feasible multiflow $f$ in $(V,E,u,S)$ exists, 
		and is optimal to MCMF.
	\end{itemize}
\end{Lem}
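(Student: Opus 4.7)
I would prove part (2) first, since the support operation in part (1) is elementary and the optimality assertions of (1) then follow by combining cost-preservation with (2). The crux is constructing, from a feasible $u$ for FTB, an $r$-feasible multiflow in $(V,E,u,S)$. I would reduce this to the Lov\'asz--Cherkasky theorem, which asserts that the maximum fractional $T$-multiflow value in an undirected network with terminal set $T$ equals $(1/2)\sum_{t\in T}\lambda(t,T\setminus\{t\})$, via an augmentation that converts the per-terminal cut condition (\ref{eqn:cutcover}) into a single total-value assertion.

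\textbf{Part (2) via augmentation.} Starting from an FTB-feasible $u$, I would build an augmented network $N^{*}=(V^{*},E^{*},u^{*},S^{*})$ by attaching to each $s\in S$ a fresh pendant vertex $s^{*}$ joined to $s$ by a new edge of capacity $r(s)$, and setting $S^{*}:=\{s^{*}:s\in S\}$. The first task is to verify $\lambda_{u^{*}}(s^{*},S^{*}\setminus\{s^{*}\})=r(s)$ for every $s$: the pendant edge alone is a cut of value $r(s)$, while any cut in $N^{*}$ separating $s^{*}$ from the remaining $t^{*}$ pushes a single-commodity $s$-to-$(S\setminus\{s\})$ flow of value at most $r(s)$ through the pendant edge, and (\ref{eqn:cutcover}) guarantees this upper bound is attained. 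Invoking Lov\'asz--Cherkasky on $N^{*}$ then produces a fractional $S^{*}$-multiflow of total value exactly $(1/2)\sum_s r(s)$. Since the flow through each $s^{*}$ is bounded above by $r(s)$ (the pendant capacity) and the sum over $s^{*}$ of these flows equals twice the total multiflow value, equality is forced at every terminal. Stripping the pendant prefix and suffix from each $S^{*}$-path yields an $S$-path in $(V,E)$; the resulting multiflow $f$ satisfies $f(e)\le u(e)$ and has flow value $r(s)$ at each $s$, establishing the existence assertion.

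\textbf{Part (1) and the optimality links.} If $f$ is any $r$-feasible multiflow and $u(e):=f(e)$ is its support, then $u\le c$ is immediate, and for $s\in S$ and $X$ with $X\cap S=\{s\}$ every $f$-path ending at $s$ crosses $\delta(X)$ at least once, so $u(\delta(X))\ge\sum_{P\text{ ends at }s}f(P)\ge r(s)$. Hence $u$ is FTB-feasible with identical cost $\sum_e a(e)u(e)=a(f)$, giving $\opt(\text{FTB})\le\opt(\text{MCMF})$. Conversely, applying (2) to any FTB-feasible $u$ produces a multiflow of cost at most $\sum_e a(e)u(e)$ that is also $r$-feasible in $(V,E,c,S)$, giving $\opt(\text{MCMF})\le\opt(\text{FTB})$. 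Equality of optima then propagates optimality in both directions and yields both claims.

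\textbf{Main obstacle.} The difficulty sits entirely in part (2): condition (\ref{eqn:cutcover}) is only a necessary condition per terminal, and combining the individual $s$-to-$(S\setminus\{s\})$ single-commodity flows into one simultaneous $r$-feasible multiflow is not provided by any max-flow/min-cut argument. The augmentation reduces this multicommodity existence question to the free multiflow setting of Lov\'asz--Cherkasky, where the pigeonhole arithmetic (pendant capacities $r(s)$ matched against total multiflow value $(1/2)\sum_s r(s)$) automatically forces each individual demand to be met exactly.
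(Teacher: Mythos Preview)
The paper does not prove this lemma; it simply cites Fukunaga. Your overall architecture (show $\opt(\text{FTB})=\opt(\text{MCMF})$ by producing a feasible object of equal cost in each direction) is sound, and Part~(1) together with the optimality links are fine. The gap is in Part~(2), specifically in your augmentation.

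Your claim that $\lambda_{u^{*}}(s^{*},S^{*}\setminus\{s^{*}\})=r(s)$ is false. Take $S=\{s,t\}$ with $r(s)=10$, $r(t)=1$, and a single edge $st$ with $u(st)=10$; then $u$ satisfies~(\ref{eqn:cutcover}). In $N^{*}$ the pendant at $t$ has capacity $1$, so the cut $\{t^{*}\}$ alone gives $\lambda_{u^{*}}(s^{*},\{t^{*}\})=1\neq 10$. The Lov\'asz--Cherkasky value in $N^{*}$ is then $1$, not $(1/2)\sum_{s}r(s)=5.5$, and the multiflow you extract sends only one unit through $s$, violating $r$-feasibility at $s$. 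The step that fails is the lower bound on $\lambda_{u^{*}}(s^{*},S^{*}\setminus\{s^{*}\})$: a flow of value $r(s)$ from $s$ to $S\setminus\{s\}$ in the original network need not extend to the starred terminals, because the pendant edges $tt^{*}$ (for $t\neq s$) may have capacity $r(t)<r(s)$. Condition~(\ref{eqn:cutcover}) says nothing about cuts $X$ with $|X\cap S|\geq 2$, and those are exactly the cuts that become relevant once you force the flow out through the other pendants.

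The fix is to drop the augmentation entirely and apply Lov\'asz--Cherkasky directly to $(V,E,u,S)$. A maximum fractional free multiflow $f^{*}$ there satisfies $\sum_{s}\mathrm{val}_{s}(f^{*})=\sum_{s}\lambda_{u}(s,S\setminus\{s\})$, and since $\mathrm{val}_{s}(f^{*})\leq\lambda_{u}(s,S\setminus\{s\})$ for every $s$, equality holds termwise. By~(\ref{eqn:cutcover}) each $\lambda_{u}(s,S\setminus\{s\})\geq r(s)$, so $f^{*}$ is already $r$-feasible in $(V,E,u,S)$, with cost $\sum_{e}a(e)f^{*}(e)\leq\sum_{e}a(e)u(e)$. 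Your pigeonhole arithmetic then goes through verbatim with $\lambda_{u}(s,S\setminus\{s\})$ in place of $r(s)$.
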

Moreover, half-integrality property holds:
\begin{Thm}[{\cite{Fukunaga14}}]\label{thm:half-integ}
	There exist half-integral optimal solutions in FTB, MCMF, and their LP-dual.
\end{Thm}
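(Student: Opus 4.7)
The plan is to establish the three half-integrality claims in sequence: FTB (primal), its LP-dual, and then MCMF, deducing the last from the first via Lemma~\ref{lem:NvsL}. Throughout, the central technique will be uncrossing on cut constraints to expose a laminar structure on which the relevant constraint matrices become network matrices.

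First I would write down the LP relaxation of FTB,
\begin{equation*}
\min \sum_{e \in E} a(e) u(e) \ \text{subject to}\ u(\delta(X)) \geq r(s)\ (s \in S,\, X \cap S = \{s\}),\ 0 \leq u \leq c,
\end{equation*}
together with its LP-dual, which has a nonnegative variable $y_{s,X}$ for each cut constraint and $z_e$ for each capacity upper bound. For the primal, take a vertex-optimal solution $u^*$ and consider the family of tight cut constraints at $u^*$. If two tight cuts $X_1, X_2$ (isolating terminals $s_1, s_2$) cross, I would replace them by uncrossed alternatives: $X_1 \cap X_2$ and $X_1 \cup X_2$ when $s_1 = s_2$, or $X_1 \setminus X_2$ and $X_2 \setminus X_1$ (isolating $s_1$ and $s_2$ respectively) when $s_1 \neq s_2$, using submodularity/posimodularity of $X \mapsto u^*(\delta(X))$ to certify that the replacements remain tight. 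Iterating reduces the tight family to a laminar one. Since the incidence matrix of the cut constraints restricted to a laminar family is, up to signs, a network matrix, the polyhedron cut out by the tight constraints together with the integer bounds $0 \leq u \leq c$ and integer right-hand side $r$ has only half-integral vertices; hence $u^*$ is half-integral.

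For the LP-dual, I would apply the same uncrossing argument to a dual-optimal pair $(y, z)$: any positive dual weight on crossing cuts can be redistributed to uncrossed cuts while preserving dual feasibility and objective (by the same submodular/posimodular identities), so we may assume the support of $y$ is laminar. The resulting laminar dual LP again has a network-type constraint matrix with integer data $r, c, a$, so it admits a half-integral optimum. For MCMF, take the half-integral FTB optimum $u^*$ and apply Lemma~\ref{lem:NvsL}(2) to obtain an $r$-feasible multiflow in $(V, E, u^*, S)$ that is optimal for MCMF. Since $2u^*$ is integer-valued and satisfies the cut constraints for doubled demand $2r$, the classical integer $S$-path packing theorem of Mader/Lov\'asz in $(V, E, 2u^*, S)$ with demands $2r$ produces an integral $2r$-feasible multiflow $f'$; then $f := f'/2$ is half-integral, $r$-feasible, and of cost $\sum_e a(e) u^*(e)$, hence optimal for MCMF.

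The main obstacle will be the uncrossing step when the two crossing cuts carry different terminal labels: standard submodular uncrossing applies cleanly when $s_1 = s_2$, but the case $s_1 \neq s_2$ requires verifying that $X_1 \setminus X_2$ and $X_2 \setminus X_1$ each remain valid $s_i$-isolating cuts and that the combined inequality $u^*(\delta(X_1 \setminus X_2)) + u^*(\delta(X_2 \setminus X_1)) \leq u^*(\delta(X_1)) + u^*(\delta(X_2))$ holds, so that both replacement constraints must also be tight. Once the laminar reduction is secured, half-integrality of primal and dual vertex solutions follows from the standard integrality/parity properties of laminar and network constraint matrices, and half-integrality for MCMF is a direct consequence of the integer $S$-path packing theorem applied to the doubled capacities.
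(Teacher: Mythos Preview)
This theorem is quoted from \cite{Fukunaga14} and is not proved in the paper; the only related material is the laminar uncrossing in the proof sketch of Proposition~\ref{prop:duality1}, which then \emph{invokes} Theorem~\ref{thm:half-integ} rather than deriving it. So your proposal has to be judged on its own. Your dual uncrossing step is fine and matches the paper's sketch; the other two steps have real gaps.

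For the FTB primal, the claim that the laminar cut-incidence matrix is ``up to signs a network matrix'' is false for undirected graphs. Take $K_3$ with all three vertices terminals, $r\equiv 1$, $c\equiv 1$: the vertex $u^*\equiv 1/2$ is determined by the laminar family $\{\{1\},\{2\},\{3\}\}$, whose $3\times 3$ incidence matrix has determinant $-2$. Since row/column sign flips preserve total unimodularity, no signing makes this a network matrix---that would force integrality, contradicting the example. Half-integrality of laminar undirected cut systems does hold, but it comes from bidirected/T-cut theory (Edmonds--Johnson type arguments), not from network matrices, and you have not supplied that argument. For MCMF, the appeal to ``Mader/Lov\'asz'' does not deliver what you need. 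Mader's theorem concerns openly-disjoint $T$-paths; the relevant statement is Lov\'asz--Cherkassky on maximum free multiflows, which for integer capacities guarantees only a \emph{half}-integral optimum in general, integral only under the inner-Eulerian condition. Applying it to $(V,E,2u^*,S)$ therefore yields a half-integral $2r$-feasible multiflow, hence merely a quarter-integral $r$-feasible one after halving. To get integrality in the doubled network you would have to establish the parity property $u^*(\delta(i))\in\ZZ$ for every non-terminal $i$, which is itself a nontrivial structural fact about half-integral FTB vertices that you have not argued.
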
	
By utilizing this half-integrality,
Fukunaga~\cite{Fukunaga14} developed a $4/3$-approximation algorithm.
His algorithm, however, uses the ellipsoid method to obtain 
a half-integral (extreme) optimal solution in FTB.

Based on the SDA framework of an L-convex function on a tree-grid,
the paper~\cite{HH14extendable} developed 
a combinatorial weakly polynomial time algorithm for MCMF together with  a combinatorial implementation 
of the $4/3$-approximation algorithm for TB. 
\begin{Thm}[\cite{HH14extendable}]\label{thm:algo1}
	A half-integral optimal solution in MCMF and 
	a $4/3$-approximate solution in TB
	can be obtained in
	$O(n \log (n AC)\, {\rm MF}(kn, km))$ time. 
\end{Thm}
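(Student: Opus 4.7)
The plan is to realise MCMF (equivalently, by Lemma~\ref{lem:NvsL}, FTB) as the minimisation of an L-convex function on a tree-grid, and then apply the SDA framework of Section~\ref{subsec:SDA} combined with cost/capacity scaling. The first step is to exhibit the LP-dual of FTB as a function $g : T^S \to \overline{\RR}$, where $T$ is a zigzagly oriented half-integer path (justified by the half-integrality guaranteed by Theorem~\ref{thm:half-integ}), so that $T^S$ is precisely a tree-grid indexed by terminals. The value $g(p)$ should be the Lagrangian relaxation of the cut-covering constraint~\eqref{eqn:cutcover} by a terminal ``potential'' $p:S\to T$; the key structural claim, proved by a case analysis on paths in $T$, is that this $g$ satisfies the midpoint inequality~\eqref{eqn:midpoint}, i.e.\ is L-convex on $T^S$.

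Next, the steepest descent step needs to be implemented efficiently. By Lemma~\ref{lem:local_1}, minimising $g$ over $I_x \cup F_x$ is a $\pmb{k}$-submodular minimisation. For the specific $g$ arising from MCMF, this local problem inherits network-flow structure and can be cast as a single maximum flow on an auxiliary graph built by attaching a gadget to each of the $k$ terminals of $N$, giving $O(kn)$ nodes and $O(km)$ edges, hence ${\rm MF}(kn,km)$ time per iteration.

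The iteration count is handled by Theorem~\ref{thm:l_inf_bound} combined with scaling. A direct application only yields $d_\Delta(x,\opt g) = O(AC)$, which is pseudo-polynomial, so the plan is to run SDA on a sequence of $O(\log(nAC))$ scaled instances in which the effective capacity/cost granularity is halved at each stage. At each scale, the optimum of the previous stage lies within $\ell_\infty$-distance $O(n)$ of the current optimum in the relevant apartment $\simeq \ZZ^{|S|}$, so Theorem~\ref{thm:l_inf_bound} bounds that stage by $O(n)$ iterations; summing gives the claimed $O(n\log(nAC))$ total. A half-integral optimal multiflow is then recovered from the optimal dual via Lemma~\ref{lem:NvsL}, and the $4/3$-approximation for TB follows by the combinatorial rounding of Fukunaga~\cite{Fukunaga14} applied to this dual, replacing his ellipsoid-based subroutine.

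The main obstacle I anticipate is the problem-specific structural work, rather than the generic SDA machinery: verifying the L-convexity of $g$ on the tree-grid $T^S$, and reducing each local $\pmb{k}$-submodular minimisation to a \emph{single} max-flow computation on the $kn$-node auxiliary network. The latter is crucial because, as noted in Section~\ref{subsec:SDA}, no polynomial oracle-model algorithm is known for general $\pmb{k}$-submodular minimisation, so one cannot simply invoke a black-box minimiser. Given these two ingredients, Theorem~\ref{thm:l_inf_bound} and the scaling argument assemble into the stated running time.
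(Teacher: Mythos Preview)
Your high-level plan (cast the LP-dual as an L-convex function on a tree-grid, run SDA with a max-flow subroutine for the local step, and add a scaling layer to get the $\log(nAC)$ factor) matches the paper, but two of the concrete choices you make would not work as stated.

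\textbf{Wrong domain for the dual.} The LP-dual of FTB is \emph{not} a function on $T^{S}$ with $T$ a path and one variable per terminal. After uncrossing, the dual variables $\pi_X$ (indexed by sets $X$ with $|X\cap S|=1$) are represented by a \emph{subdivided star} $G$ with $k$ rays and centre $O$, and every node $i\in V$ (not just terminals) receives a position $p_i\in G$; see Proposition~\ref{prop:duality1}. The resulting objective
\[
\omega(p)= -\sum_{s\in S} r(s)\,d(p_s,O)+\sum_{ij\in E} c(ij)\max\{d(p_i,p_j)-a(ij),0\}
\]
lives on $G^{n}$, not on a path-power indexed by $S$. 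The star structure is what forces the node-splitting into $k$ copies in the auxiliary network $D_p$ and is the true source of the $kn$ nodes and $km$ edges in the bound. With only $|S|$ variables on a path you cannot express the dual, and the midpoint inequality you propose to check would be for the wrong function.

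\textbf{Wrong scaling.} The $\log(nAC)$ factor is \emph{not} obtained by cost/capacity scaling of the instance. The paper uses \emph{domain scaling} on the star $G$: for $\ell=\lceil\log nA\rceil,\ldots,0,-1$ one restricts to the subtree $G_\ell$ of vertices at distance in $2^{\ell}\ZZ$ from $O$, defines a modified L-convex $\omega_\ell$ on $G_\ell^{\,n}$, and invokes a proximity theorem for L-convex functions to guarantee $d_\Delta(x^*_\ell,\opt(\omega_{\ell-1}))=O(n)$. This proximity step is the nontrivial ingredient; halving capacities or costs does not by itself give an $O(n)$ $l_\infty$-proximity between successive optima on the tree-grid.

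Once the domain is corrected to the star-grid $G^n$ and the scaling is replaced by domain scaling with the proximity bound, the rest of your outline (optimality $\Leftrightarrow$ circulation feasibility in $D_p$, steepest direction from a minimal maximum violating cut, recovery of a half-integral multiflow, and Fukunaga's rounding for TB) is exactly what the paper does.
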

Here ${\rm MF}(n', m')$ denote the time complexity of solving the maximum flow problem on a network of $n'$ nodes and $m'$ edges, and $A := \max \{ a(e) \mid e \in E\}$ 
and $C:= \max \{c(e) \mid e \in E\}$.

It should be noted that MCMF generalizes
the {\em minimum-cost maximum free multiflow problem} considered by Karzanov~\cite{Kar79, Kar94}.
To this problem,  Goldberg and Karzanov~\cite{GK97}
developed a combinatorial weakly polynomial time algorithm.
However the analysis of their algorithm is not easy, 
and the explicit polynomial running time is not given. 
The algorithm in Theorem~\ref{thm:algo1} is the first combinatorial weakly polynomial time algorithm having an explicit running time.

\subsubsection{Algorithm}
Here we outline the algorithm in Theorem~\ref{thm:algo1}.
Let $N = (V, E, c, S)$ be a network, 
$a$ an edge cost, and $r$ a demand. 
For technical simplicity, 
the cost $a$ is assumed to be positive integer-valued $\geq 1$.
First we show that 
the dual of MCMF can be formulated as an optimization over 
the product of {\em subdivided stars}.
For $s \in S$,
let $G_s$ be a path with infinite length and an end vertex $v_s$ of degree one.
Consider the disjoint union  $\bigcup_{s \in S} G_s$ 
and identify all $v_s$ to one vertex $O$.
The resulting graph, called a {\em subdivided star}, 
is denoted by $G$, and the edge-length is defined as $1/2$ uniformly.
Let $d = d_G$ denote the shortest path metric of $G$ 
with respect to this edge-length.

Suppose that $V = \{1,2,\ldots,n\}$.
A {\em potential} is a vertex $p = (p_1,p_2,\ldots,p_n)$ in $G^n$
such that $p_s \in G_s$ for each $s \in S$. 
\begin{Prop}[\cite{HH14extendable}]\label{prop:duality1}
	The minimum cost of an $r$-feasible multiflow is equal to the maximum of
	\begin{equation}\label{eqn:formula1}
	\sum_{s \in S} r(s) d(p_s, O) - \sum_{ij \in E} c(ij) \max \{ d(p_i,p_j) - a(ij), 0 \} 
	\end{equation}
	over all potentials $p = (p_1,p_2,\ldots,p_n) \in G^n$.
\end{Prop}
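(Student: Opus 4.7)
The plan is to derive the proposition from linear programming duality for MCMF and to realize a (half-integral) dual optimum by a potential $p \in G^n$. Writing MCMF as the LP that minimizes $\sum_P a(P) f(P)$ subject to capacity $\sum_{P \ni e} f(P) \leq c(e)$, demand $\sum_{P:\, s \in \partial P} f(P) \geq r(s)$, and $f \geq 0$, its LP dual has variables $\pi_s \geq 0$ ($s \in S$) and $\mu_e \geq 0$ ($e \in E$), the constraint $\pi_s + \pi_t \leq \sum_{e \in P}(a(e) + \mu(e))$ for every $S$-path $P$ with endpoints $s \neq t$, and objective $\sum_s r(s)\pi_s - \sum_e c(e)\mu_e$. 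Writing $\rho := d_{a+\mu}$ for the shortest-path metric on $V$ with weights $a+\mu$, the dual constraint reads $\rho(s,t) \geq \pi_s + \pi_t$ for all distinct $s, t \in S$.

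The easy direction turns a potential into a dual-feasible pair: given $p$, set $\pi_s := d(p_s, O)$ and $\mu_{ij} := \max\{d(p_i, p_j) - a(ij), 0\}$. Along any $S$-path $P$ from $s$ to $t$, telescoping the triangle inequality $d(p_i, p_j) \leq a(ij) + \mu(ij)$ gives $d(p_s, p_t) \leq \sum_{e \in P}(a(e) + \mu(e))$; since $p_s \in G_s$ and $p_t \in G_t$ lie on different arms of the subdivided star, $d(p_s, p_t) = d(p_s, O) + d(p_t, O) = \pi_s + \pi_t$, establishing feasibility. The dual objective evaluated here equals (\ref{eqn:formula1}), so the potential maximum is at most the LP dual optimum.

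The main step is the converse. By Theorem~\ref{thm:half-integ}, fix a half-integral optimal dual $(\pi, \mu)$. For $i \in V$ set $h_s(i) := \pi_s - \rho(i, s)$ and $h(i) := \max_{s \in S} h_s(i)$; if $h(i) > 0$ choose $s^*(i) \in \arg\max_{s \in S} h_s(i)$ and place $p_i$ on the arm $G_{s^*(i)}$ at distance $h(i)$ from $O$, otherwise let $p_i := O$. Since $a$ is integer-valued and $\mu$ is half-integral, $\rho$ and $h$ are half-integers, so each $p_i$ is a vertex of $G$. For $s \in S$, $h_t(s) \leq \pi_t - (\pi_s + \pi_t) = -\pi_s \leq 0$ for $t \neq s$, forcing $s^*(s) = s$ and $d(p_s, O) = \pi_s$. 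The crux is the non-expansive property $d(p_i, p_j) \leq \rho(i, j)$, which I would verify by a short case analysis: (i) $p_i = p_j = O$ is trivial; (ii) if only $h(i) > 0$, then $h_{s^*(i)}(j) \leq h(j) \leq 0$ yields $\pi_{s^*(i)} \leq \rho(j, s^*(i)) \leq \rho(i,j) + \rho(i, s^*(i))$, so $h(i) \leq \rho(i,j)$; (iii) same-arm reduces to the triangle inequality for $\rho$; (iv) different arms use dual feasibility via $\pi_{s^*(i)} + \pi_{s^*(j)} \leq \rho(s^*(i), s^*(j)) \leq \rho(i, s^*(i)) + \rho(i,j) + \rho(j, s^*(j))$. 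Given non-expansiveness, $\max\{d(p_i,p_j) - a(ij), 0\} \leq \max\{\rho(i,j) - a(ij), 0\} \leq \mu(ij)$ for every edge, so (\ref{eqn:formula1}) at $p$ is at least the dual objective. Combining both inequalities with strong LP duality completes the proof; the only substantive obstacle is the construction and non-expansiveness of the embedding into the subdivided star, and half-integrality is precisely what allows this continuous embedding to land at vertices of $G$.
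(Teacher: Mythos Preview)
Your proof is correct and takes a genuinely different route from the paper's. The paper works with the LP-dual of FTB, whose variables $\pi_X$ are indexed by node-subsets $X$ with $|X\cap S|=1$; it then invokes a standard uncrossing argument to obtain a laminar optimal family, uses the tree-representation of that laminar family (which, because each set contains a single terminal, is a subdivided star), and reads off the potential from the tree, appealing to half-integrality only at the end to match the $1/2$ edge-length. You instead dualize the path-formulation of MCMF directly to obtain node/edge variables $(\pi_s,\mu_e)$, and construct the potential by an explicit shortest-path embedding $i\mapsto p_i$ based on $h(i)=\max_s(\pi_s-\rho(i,s))$, verifying non-expansiveness by a four-case analysis.

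What each approach buys: the paper's uncrossing/laminar argument is the classical structural route and makes the ``tree shape'' of the dual transparent (and reusable elsewhere), but it relies on the machinery of laminar tree-representations. Your argument is more elementary and self-contained: it avoids uncrossing entirely and exhibits the potential by a concrete formula, with the non-expansive property doing all the work. One small point to make airtight in your write-up is that the half-integrality you invoke from Theorem~\ref{thm:half-integ} is for the path-LP dual $(\pi,\mu)$ rather than the set-variable dual $\pi_X$ the paper uses; this is indeed what Fukunaga's result covers, but a sentence making the link explicit would remove any ambiguity.
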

\begin{proof}[Sketch]
The LP-dual of FTB is given by:
\begin{eqnarray*}
\mbox{Max.} &&  \sum_{s \in S} r(s)\sum_{X: X \cap S = \{s\}} \pi_X 
- \sum_{e \in E} c(e) \max \{ 0, \sum_{X:   e \in \delta (X)} \pi_X  - a(e) \}\\
\mbox{s.t.} && \pi_X \geq 0\ (X \subseteq V: |X \cap S|=1). \nonumber
\end{eqnarray*}
By the standard uncrossing argument, one can show that there always exists an optimal solution $\pi_X$ 
such that  $\{ X  \mid \pi_X > 0 \}$ is {\em laminar}, 
i.e., if $\pi_X,\pi_Y > 0$ 
it holds that $X \subseteq Y$, $Y \subseteq X$, or $X \cap Y = \emptyset$.
Consider the tree-representation of the laminar family $\{ X  \mid \pi_X > 0 \}$. 
Since each $X$ contains exactly one terminal, 
the corresponding tree is necessarily a subdivided star $\tilde G$ with center $O$ and non-uniform edge-length.
In this representation,
each $X$ with $\pi_X > 0$ corresponds to an edge $e_X$ of $\tilde G$, and each node $i$ is associated with a vertex $p_i$ in $\tilde G$.
The length of each edge $e_X$ is defined as $\pi_X$,  and the resulting shortest path metric is denoted by $D$.
Then it holds that 
$\sum_{X: X \cap S = \{s\}} \pi_X = D(p_s, O)$ 
and $\sum_{X:   ij \in \delta (X)} \pi_X  = D(p_i, p_j)$.
By the half-integrality (Theorem~\ref{thm:half-integ}), we can assume that each $\pi_X$ is a half-integer.
Thus we can subdivide $\tilde G$ to $G$ so that 
each edge-length of $G$ is $1/2$, and obtain the formulation in (\ref{eqn:formula1}). 
\end{proof}
Motivated by this fact,  
define $\omega: G^n \to \overline\RR$ by
\begin{equation*}
p \mapsto  - \sum_{s \in S} r(s) d(p_s, O)
+ \sum_{ij \in E} c(ij) 
\max \{ d(p_i,p_j) - a(ij), 0 \} + I(p),
\end{equation*}
where $I$ is the indicator function of the set of all potentials, 
i.e., $I(p) := 0$ if $p$ is a potential and $\infty$ otherwise.
The color classes of $G$ are denoted by $B$ and $W$ with $O \in B$, and $G$ is oriented zigzagly.
In this setting, the objective function $\omega$ of the dual of MCMF 
is an L-convex function on tree-grid $G^n$: 
\begin{Prop}[\cite{HH14extendable}]
	The function $\omega$ is L-convex on $G^n$.
\end{Prop}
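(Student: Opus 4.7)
The plan is to decompose $\omega = I + \omega_{\mathrm{term}} + \omega_{\mathrm{edge}}$, where $I$ is the indicator of the set of potentials, $\omega_{\mathrm{term}}(p) := -\sum_{s \in S} r(s) d(p_s, O)$, and $\omega_{\mathrm{edge}}(p) := \sum_{ij \in E} c(ij) \max\{d(p_i, p_j) - a(ij), 0\}$. Since the midpoint inequality \eqref{eqn:midpoint} is preserved by addition and by scaling with the nonnegative constants $r(s), c(ij)$, it suffices to establish L-convexity of each summand. Since each summand depends on at most two coordinates of $p$ and $\bullet, \circ$ act coordinate-wise, this further reduces to checking \eqref{eqn:midpoint} on the relevant coordinate subspace: $G_s$ for the terminal terms and $G \times G$ for each edge term.

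The first two pieces are essentially routine. The constraint $p_s \in G_s$ is preserved by the coordinate-wise tree midpoints because $G_s$ is a sub-path of $G$ containing the tree geodesic between any two of its vertices, on which the midpoints $p_s \bullet q_s, p_s \circ q_s$ lie. For each terminal summand, parametrizing $G_s$ by half-integer positions $0, 1/2, 1, \ldots$ from $O$ and computing directly gives $d(p_s, O) + d(q_s, O) = d(p_s \bullet q_s, O) + d(p_s \circ q_s, O)$, so $-d(\cdot, O)$ is modular on $G_s$ and \eqref{eqn:midpoint} holds with equality.

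The main obstacle is L-convexity of each edge summand $f(x, y) := \max\{d(x, y) - a, 0\}$ on $G \times G$, where $a = a(ij) \geq 1$ is a positive integer. I would prove it by direct verification of \eqref{eqn:midpoint} through a case analysis exploiting the subdivided-star structure of $G$. Given four vertices $x, x', y, y' \in G$, the analysis would be partitioned according to which rays (emanating from the center $O$) each of the six relevant vertices $x, x', y, y'$ and their tree midpoints lies in; in each case, the tree distance admits an explicit formula in terms of ray indices (either $d(u, v) = |i_u - i_v|/2$ when $u, v$ lie on a common ray, or $d(u, v) = d(u, O) + d(O, v)$ across rays), and the midpoints $\bullet, \circ$ project into a specific pair of rays according to the zigzag partial order. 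The midpoint inequality \eqref{eqn:midpoint} for $f$ then reduces to a finite collection of algebraic inequalities in half-integer indices, verified one by one. The critical point is that the threshold $a$ is a positive integer while the tree distances are half-integers, and that $\bullet, \circ$ pair the midpoints according to parity in the zigzag order; these two features together rule out the failure mode exhibited by the analogous function $\max\{|u|+|v|-1,0\}$ on $\ZZ^2$ with ordinary $\lfloor\cdot\rfloor, \lceil\cdot\rceil$ midpoints, where the midpoint inequality does fail for $(x,y) = ((1,0),(0,1))$. The bookkeeping of this case analysis --- particularly when some midpoints collapse to $O$ or straddle the branching at $O$ --- is the technical core of the argument.
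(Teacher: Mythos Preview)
The paper states this proposition with a citation to \cite{HH14extendable} and gives no proof here, so there is nothing to compare against directly; I assess your proposal on its own terms.

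Your decomposition $\omega = I + \omega_{\mathrm{term}} + \omega_{\mathrm{edge}}$ and the treatment of the indicator $I$ and of each terminal summand $-r(s)\,d(p_s,O)$ are correct and routine. The edge term $f(x,y)=\max\{d(x,y)-a,0\}$ on $G\times G$ is indeed the crux, and there your proposal is only a plan: you describe a case analysis by ray configurations but do not carry any of it out. That is a genuine gap---the hard part is simply deferred. Your intuition that the positive integrality of $a$ is essential is correct: with $a=1/2$, taking $(x,y)=(u_1,O)$ and $(x',y')=(O,u_2)$ where $u_1,u_2$ are the first vertices on distinct rays $G_1,G_2$, one gets $f(x,y)=f(x',y')=0$ while the $\circ$-midpoint $(u_1,u_2)$ has $d=1$ and $f=1/2$, violating \eqref{eqn:midpoint}. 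With $a=1$ the same configuration gives equality, so the promised case analysis must genuinely exploit $a\in\ZZ_{\geq 1}$ together with the parity pairing of $\bullet,\circ$.

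One correction to your heuristic: the failure of $\max\{|u|+|v|-1,0\}$ on $\ZZ^2$ at $((1,0),(0,1))$ is \emph{not} repaired by switching to zigzag midpoints---they coincide with the ordinary ones at that pair. What rescues the tree version is a different phenomenon: the midpoint on the star $G$ of two vertices in distinct rays is taken along the tree geodesic through $O$ (e.g., $u_1\bullet u_2 = u_1\circ u_2 = O$), not coordinate-wise in an embedding into $\ZZ^2$. Your case analysis must track this collapsing at the branch point. A possibly cleaner route, suggested by the equivalence (1)$\Leftrightarrow$(3) of Theorem~\ref{thm:convex}, is to verify that each edge summand is ${\pmb k}$-submodular on every principal ideal and filter of $G\times G$ and that $\dom\omega$ is chain-connected; this localizes the computation to products $S_{k_1}\times S_{k_2}$ and avoids the global split over ray configurations.
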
	

In the following, we show that the steepest descent algorithm for $\omega$ is efficiently implementable with a maximum flow algorithm.
An outline is as follows:
\begin{itemize}
	\item The optimality of a potential $p$ is equivalent to the feasibility of a circulation problem on directed network $D_{p}$ associated with $p$ (Lemma~\ref{lem:optimality1}), 
	where a half-integral optimal multiflow is recovered from an integral circulation (Lemma~\ref{lem:MF}). 
	\item If $p$ is not optimal, then a certificate ($=$ violating cut) of the infeasibility yields a steepest direction $p'$ at $p$ (Lemma~\ref{lem:steepest1}).   
\end{itemize}
This algorithm may be viewed as 
a multiflow version of the dual algorithm~\cite{Hassin83} on 
minimum-cost flow problem; 
see also \cite{Shioura17L-convex_survey} 
for a DCA interpretation of the dual algorithm.

Let $p \in G^n$ be a potential
and $f:{\cal P} \to \RR_+$ an $r$-feasible multiflow, 
where we can assume that $f$ is positive-valued.
Considering $\sum_{e \in E} a(e) f(e) - (- \omega(p)) (\geq 0)$, 
we obtain the complementary slackness condition:  
Both $p$ and $f$ are optimal if and only if
\begin{eqnarray}
f(e) = 0 \hspace{1.1cm}  && (e = ij \in E: d(p_i, p_j) < a(ij)), \label{eqn:slackness1}\\
f(e) = c(e)\hspace{0.69cm}  && (e = ij \in E: d(p_i, p_j) > a(ij)), \label{eqn:slackness2} \\
\sum_{P \in {\cal P}} \{f(P) \mid \mbox{$P$ connects $s$} \} = r(s)
\hspace{0.7cm} 
&& (s \in S: p_s \neq O), \label{eqn:slackness3}\\
 \sum_{k=1,\ldots, \ell} d(p_{i_{k-1}}, p_{i_{k}}) = d(p_s,p_t) &&(P = (s = i_0,i_1,\ldots, i_{\ell} = t) \in {\cal P}).\label{eqn:slackness4}
\end{eqnarray}
The first three conditions are essentially the same as
the kilter condition in the (standard) minimum cost flow problem.
The fourth one is characteristic of multiflow problems, 
which says that an optimal multiflow $f$ induces 
a collection of geodesics in $G$ via embedding 
$i \mapsto p_i$ for an optimal potential $p$.

Observe that these conditions, except the fourth one, are imposed 
on the support of $f$ rather than multiflow $f$ itself.
The fourth condition can also be represented by a support condition  
on an extended (bidirected) network which we construct below; see Figure~\ref{fig:Dp}. 
An optimal multiflow will be recovered from a fractional bidirected flow on this network.
\begin{figure}[t]
	\begin{center}
		\includegraphics[scale=0.8]{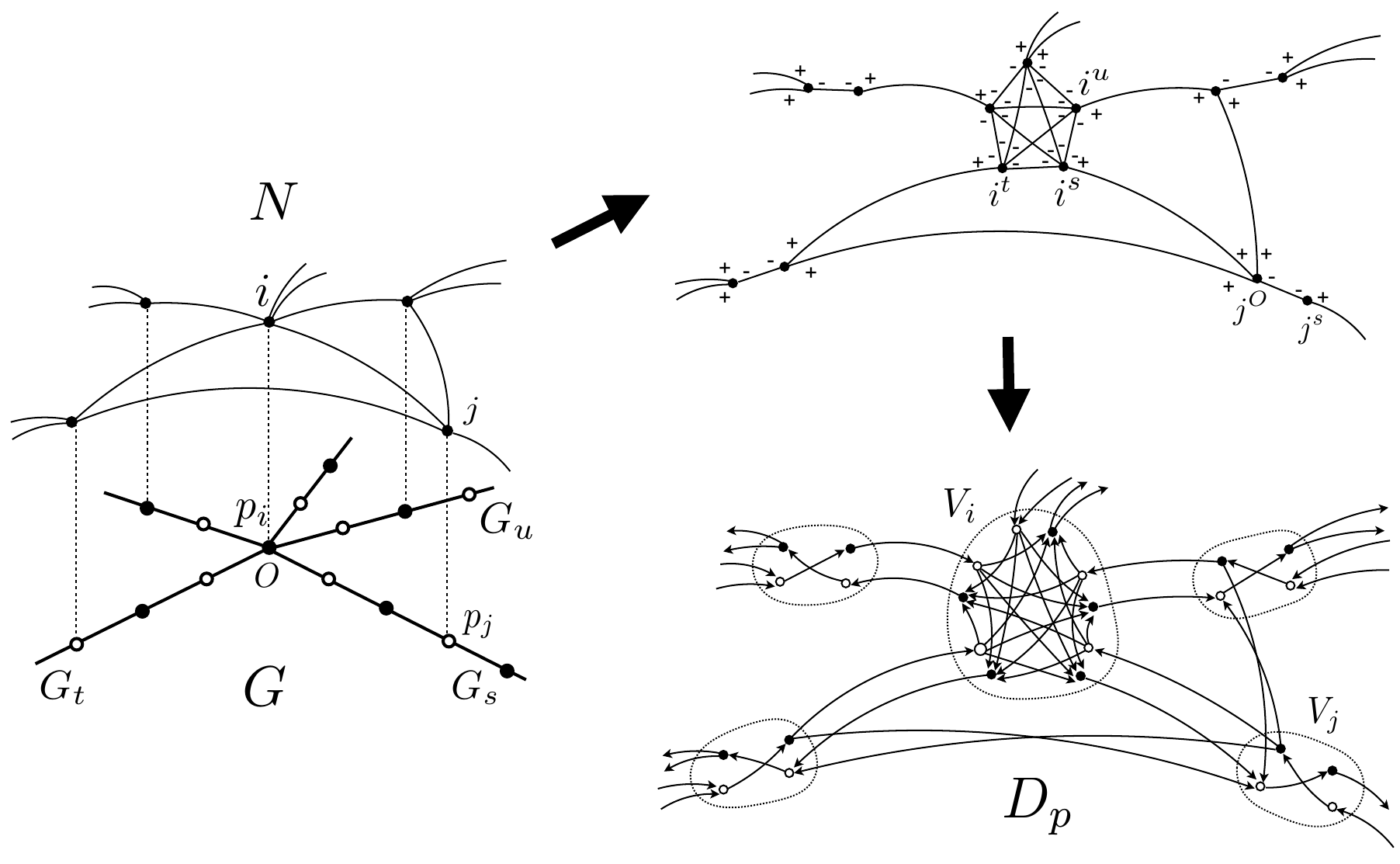}
		\caption{Construction of extended networks. 
			Each node $i$ is split to a clique of size $|S|$ 
			$(p_i=O)$ or size two $(p_i \neq O)$, where
			the edges in these cliques are bidirected edges of sign $(-,-)$. 
			In the resulting bidirected network, 
			drawn in the upper right, 
			the path-decomposition of a bidirected flow 
			gives rise to a multiflow flowing in $G$ geodesically.
			The network $D_p$, drawn in the lower right, 
			is a directed network equivalent to the bidirected network.
			}
		\label{fig:Dp}
	\end{center}
\end{figure}\noindent

Let $E_{=}$ and $E_{>}$ denote the sets of edges $ij$ with $d(p_i,p_j) = a(ij)$ and with $d(p_i,p_j) > a(ij)$, respectively.
Remove all other edges (by (\ref{eqn:slackness1})).
For each nonterminal node $i$ with $p_i = O$, 
replace $i$ by $|S|$ nodes 
$i^s$ $(s \in S)$ and add new edges $i^si^{t}$ 
for distinct $s,t \in S$.
Other node $i$ is  a terminal $s$ or a nonterminal node 
with $p_i \in G_s \setminus \{O\}$. 
Replace each such node $i$ by two nodes $i^s$ and $i^O$, 
and add new edge $i^s i^O$. 
The node $i^s$ for terminal $i=s$ is also denoted by $s$.
The set of added edges is denoted by $E_{-}$.
For each edge $e = ij \in E_{=} \cup E_{>}$, 
replace $ij$ by $i^O j^s$
if $p_i,p_j \in G_s$ and $d(p_i,O) > d(p_j,O)$, and 
replace $ij$ by $i^Oj^O$ if $p_i \in G_s \setminus \{O\}$ and 
$p_j \in G_t \setminus \{O\}$ for distinct $s,t \in S$. 

An edge-weight $\psi: E_{=} \cup E_{>} \cup E_{-} \to \RR$ is called  a {\em $p$-feasible support} if  
\begin{eqnarray}
 0 \leq \psi(e) \leq c(e) && (e \in E_{=}),\label{eqn:p-feasible_cap1} \\
 \psi(e) = c(e)  && (e \in E_{>}), \label{eqn:p-feasible_cap2} \\
\psi (e) \leq 0 \hspace{0.4cm} && (e \in E_{-}), \label{eqn:p-feasible_auxial} \\
 - \psi(\delta(s)) \geq r(s) && (s \in S, p_s = O), \label{eqn:p-feasible_r1}\\
 - \psi(\delta(s)) = r(s) && (s \in S, p_s \neq O), \label{eqn:p-feasible_r2}\\
\psi(\delta({u})) = 0 \hspace{0.4cm} && 
(\mbox{each nonterminal node $u$}).\label{eqn:p-feasible_conser}
\end{eqnarray}
One can see from an alternating-path argument 
that any $p$-feasible support $\psi$
is represented as a weighted sum $\sum_{P \in {\cal P}} f(P) \chi^P$ 
for a set ${\cal P}$ of $S$-paths with nonnegative coefficients $f:{\cal P} \to \RR_+$, 
where each $P$ is a path 
alternately using edges in $E_{-}$ and edges in $E_{=} \cup E_{>}$,
and $\chi^P$ is defined by 
$\chi^P(e) := - 1$ for edge $e$ in $P$ with $e \in E_-$,  
$\chi^P(e) := 1$ for other edge $e$ in $P$, and zero 
for edges not in $P$.
Contracting all edges in $E_{-}$ 
for all paths $P \in {\cal P}$, 
we obtain an $r$-feasible multiflow 
$f^{\psi}$ in $N$, where 
$f^{\psi}(e) \leq c(e)$ and (\ref{eqn:slackness2}) are guaranteed by (\ref{eqn:p-feasible_cap1}) and (\ref{eqn:p-feasible_cap2}),  
and the $r$-feasibility and (\ref{eqn:slackness3})
are guaranteed by (\ref{eqn:p-feasible_r1}) and (\ref{eqn:p-feasible_r2}).
Also, by construction, each path in ${\cal P}$ 
induces a local geodesic in $G$ by $i \mapsto p_i$, 
which must be a global geodesic since $G$ is a tree. This implies (\ref{eqn:slackness4}), 

\begin{Lem}[\cite{HH14extendable}]\label{lem:optimality1}
	\begin{itemize}
		\item[{\rm (1)}] \ A potential $p$ is optimal if and only 
		if a $p$-feasible support $\psi$ exists.
		\item[{\rm (2)}] \ For any $p$-feasible support 
		$\psi$, 
		the multiflow $f^{\psi}$ 
		is optimal to MCMF.
	\end{itemize}
\end{Lem}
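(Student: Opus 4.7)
The plan is to deduce both statements from the LP duality in Proposition~\ref{prop:duality1} via the complementary slackness conditions (\ref{eqn:slackness1})--(\ref{eqn:slackness4}), exploiting the decomposition of $p$-feasible supports already carried out in the paragraph preceding the lemma.

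For part (2) and the ``if'' direction of (1), the work is nearly done: the preceding paragraph shows that from any $p$-feasible support $\psi$ one extracts, via an alternating-path decomposition, a set of $S$-paths ${\cal P}$ in the split network whose contractions yield an $r$-feasible multiflow $f^{\psi}$ satisfying all four slackness conditions. I would close the argument by weak LP duality: a primal-feasible $f^{\psi}$ and a dual-feasible $p$ related by complementary slackness must both be optimal. Equivalently, one computes $a(f^{\psi}) = -\omega(p)$ by telescoping $\sum_{e \in E} a(e) f^{\psi}(e)$ via (\ref{eqn:slackness2})--(\ref{eqn:slackness4}) and matches the result against the dual value in Proposition~\ref{prop:duality1}.

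For the ``only if'' direction of (1), suppose $p$ is optimal. Proposition~\ref{prop:duality1} then produces an $r$-feasible multiflow $f$ with $a(f) = -\omega(p)$, and LP complementary slackness forces $f$ to satisfy (\ref{eqn:slackness1})--(\ref{eqn:slackness4}). I would construct $\psi$ by reversing the contraction used to define $f^{\psi}$: path-decompose $f$ into $S$-paths, and for each path $P = (s = i_0, i_1, \ldots, i_\ell = t)$ lift $P$ to a path $\hat P$ in the split network by using, at each internal $i_k$, the appropriate split copies of $i_k$ together with the $E_-$ edge joining them, so that every consecutive pair $i_{k-1}^? i_k^?$ in $\hat P$ is an edge in $E_= \cup E_>$. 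The geodesic condition (\ref{eqn:slackness4}) together with the subdivided-star structure of $G$ forces the image of $P$ under $i \mapsto p_i$ to travel up one leg $G_s$, through $O$, and down another leg $G_t$, which dictates the correct split copies uniquely. Setting $\psi := \sum_P f(P)\chi^{\hat P}$ then yields the desired $p$-feasible support, with the capacity and demand constraints (\ref{eqn:p-feasible_cap1})--(\ref{eqn:p-feasible_r2}) inherited directly from (\ref{eqn:slackness1})--(\ref{eqn:slackness3}) and the $r$-feasibility of $f$.

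The main difficulty lies in this lifting step, specifically in verifying the conservation constraint (\ref{eqn:p-feasible_conser}) at split copies of an internal node. When several flow paths of $f$ pass through node $i$, their lifts must choose the split copies $i^s$ (and, when $p_i = O$, also $i^t$ for other $t$) globally consistently, so that at each split copy the net inflow along $E_= \cup E_>$ balances the outflow through incident $E_-$ edges. The tree structure of $G$ is crucial here: when $p_i \in G_s \setminus \{O\}$, every flow arrow through $i$ is forced by (\ref{eqn:slackness4}) to have its other endpoint with potential lying on $G_s$ above $p_i$, so the single internal $E_-$ edge $i^s i^O$ suffices; when $p_i = O$, each arrow through $i$ unambiguously corresponds to a single leg $G_s$, and the internal clique on $\{i^s\}_{s \in S}$ provides enough $E_-$ edges to route each incoming arrow on $i^s$ to an outgoing arrow on some $i^t$, matching the conservation of $f$ at $i$.
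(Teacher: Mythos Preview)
Your approach is correct and is the one the paper sets up: the paragraph immediately preceding the lemma already proves (2) and the ``if'' half of (1) exactly as you say, and the ``only if'' half (not argued in this exposition, only cited to \cite{HH14extendable}) is naturally completed by lifting an optimal multiflow path-by-path into the split network.

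One correction to your case analysis: when $p_i \in G_s \setminus \{O\}$, it is \emph{not} true that both neighbors of $i$ along a flow path have potential above $p_i$ on $G_s$. The geodesic condition (\ref{eqn:slackness4}) forces one neighbor $j$ to have $p_j$ further from $O$ on $G_s$ (so $ji$ becomes $j^O i^s$) and the other neighbor $k$ to have $p_k$ closer to $O$, at $O$, or on another leg (so $ik$ becomes $i^O k^?$); the lifted path therefore enters at $i^s$, takes the $E_-$ edge $i^s i^O$, and exits at $i^O$. Your conclusion that the single $E_-$ edge suffices is right, but for this reason, not the one you stated. Note also that the conservation constraint (\ref{eqn:p-feasible_conser}) requires no global coordination among paths: each lifted path $\hat P$ alternates $E_= \cup E_>$ and $E_-$ edges, so $\chi^{\hat P}$ already contributes net zero to $\psi(\delta(u))$ at every nonterminal split node $u$, and summing over paths preserves this.
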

Thus, by solving inequalities 
(\ref{eqn:p-feasible_cap1})-(\ref{eqn:p-feasible_conser}), 
we obtain an optimal multiflow 
or know the nonoptimality of $p$.
Observe that this problem is a fractional bidirected flow problem, 
and reduces to the following circulation problem.
Replace each node $u$ by two nodes $u^+$ and $u^-$.
Replace each edge $e = uv \in E_{-}$ 
by two directed edges $e^+ = u^+v^-$ 
and $e^- = v^+u^-$ with lower capacity $\underline{c}(e^+) = \underline{c}(e^-) := 0$
and upper capacity  $\overline{c}(e^+) = \overline{c}(e^-) := \infty$.
Replace each edge $e = uv \in E_{=} \cup E_{>}$ 
by two directed edges $e^+ = u^-v^+$ and $e^- = v^-u^+$, 
where $\overline{c}(e^+) = \overline{c}(e^-) := c(e)$, 
and $\underline{c}(e^+) = \underline{c}(e^-) := 0$ if 
$e \in E_{=}$ and $\underline{c}(e^+) = \underline{c}(e^-) := c(e)$ if 
$e \in E_{>}$.
For each terminal $s \in S$, add edge 
${s^-}s^{+}$, where 
$\underline{c}(s^-s^+) := r(s)$ 
and $\overline{c}(s^-s^+) := \infty$ if $p_s = O$ and 
$\underline{c}(s^-s^+) = \overline{c}(s^-s^+) := r(s)$ if $p_s \neq O$.
Let $D_p$ denote the resulting network, 
which is a variant of 
the {\em double covering network} in the minimum cost multiflow problem~\cite{Kar79,Kar94}.

A {\em circulation} is an edge-weight $\varphi$ on this network $D_p$ satisfying
$\underline{c}(e) \leq \varphi(e) \leq \overline{c}(e)$ for each edge $e$, 
and $\varphi(\delta^+(u)) = \varphi(\delta^-(u))$ for each node $u$. 
From a circulation $\varphi$ in $D_p$,
a $p$-feasible support $\psi$ is obtained by  
\begin{equation}\label{eqn:psi}
\psi(e) := (\varphi(e^+) + \varphi(e^-))/2 \quad (e \in E_{=} \cup E_{>} \cup E_{-}).
\end{equation}
It is well-known that a circulation, if it exists, 
is obtained by solving one maximum flow problem. Thus we have:
\begin{Lem}[\cite{HH14extendable}] \label{lem:MF}
	From an optimal potential, a half-integral optimal multiflow is obtained in $O({\rm MF}(kn,m + k^2 n)$ time.
\end{Lem}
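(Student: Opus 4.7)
The plan is to verify that the construction of $D_p$ described just before the lemma reduces the problem of finding an optimal multiflow to a single maximum flow computation on a network of the claimed size, and that the resulting multiflow is automatically half-integral.

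First I would count the size of $D_p$. The nodes $u^+,u^-$ arise from splitting the ``split'' vertices: each original node $i$ becomes either $|S|$ copies (if $i$ is a nonterminal with $p_i = O$) or $2$ copies (otherwise), and each copy is further doubled, giving $O(kn)$ nodes. The edges come from three sources: (i) the duplicated edges of $E_= \cup E_>$, contributing $O(m)$ edges; (ii) the $E_-$ edges, which include the clique on $|S|$ copies for each $i$ with $p_i = O$ (contributing $O(k^2 n)$) together with the single matching edge for the other split nodes ($O(n)$); and (iii) the $k$ terminal return edges $s^- s^+$. Hence $D_p$ has $O(kn)$ nodes and $O(m + k^2 n)$ edges.

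Next I would argue that finding a circulation in $D_p$ amounts to a single maximum flow computation. The network has both lower and upper capacities, but this is the standard circulation-feasibility problem, which reduces (via the classical construction adding a source/sink and saturating the lower bounds) to one maximum flow call on a network of the same asymptotic size, so the cost is $O({\rm MF}(kn, m + k^2 n))$. Since every capacity $\underline c(e),\overline c(e)$ is an integer, whenever a feasible circulation exists, an integer-valued one $\varphi$ is produced by the max-flow algorithm.

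I would then define $\psi$ on $E_= \cup E_> \cup E_-$ by the averaging formula $\psi(e) := (\varphi(e^+)+\varphi(e^-))/2$ and verify directly, edge by edge and node by node, that $\psi$ satisfies (\ref{eqn:p-feasible_cap1})--(\ref{eqn:p-feasible_conser}); the capacity bounds follow from the averaging of the capacities on $e^+,e^-$, and the node constraints follow from the conservation law at $u^+$ and $u^-$ in $D_p$. Because $\varphi$ is integral, $\psi$ is half-integral. Applying Lemma~\ref{lem:optimality1}(2) yields that $f^\psi$ is an optimal multiflow for MCMF, and its half-integrality is inherited from $\psi$ (the coefficients $f(P)$ in the decomposition of $\psi$ into alternating $S$-paths can be chosen half-integral by a standard alternating-path/flow-decomposition argument).

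The main (minor) obstacle is the half-integrality statement: one must check that the alternating-path decomposition of the half-integral $\psi$ into $S$-paths can be carried out while keeping each path flow $f(P)$ half-integral, and that this decomposition (together with the contraction of the $E_-$ edges to recover $f^\psi$ on $N$) runs within the max-flow budget. This follows from a standard flow-decomposition argument applied to $2\psi$, which is integral, so no additional asymptotic cost is incurred and the overall running time remains $O({\rm MF}(kn, m + k^2 n))$.
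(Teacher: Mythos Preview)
Your proposal is correct and follows exactly the approach the paper intends: the paper itself gives no explicit proof of this lemma, stating it as an immediate consequence of the construction of $D_p$ and the remark that a circulation is obtained by one maximum-flow call. Your filling-in of the node/edge counts, the integrality of $\varphi$, the half-integrality of $\psi$ via the averaging formula, and the appeal to Lemma~\ref{lem:optimality1}(2) are precisely the details the paper leaves implicit.
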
	 

Next we analyze the case where a circulation does not exist.
By Hoffman's circulation theorem, 
a circulation exists in $D_p$ if and only if
\[
\kappa(X) := \underline{c}(\delta^- (X)) - \overline{c}(\delta^+ (X)) 
\]
is nonpositive for every node subset $X$.
A node subset $X$ is said to be a {\em violating cut} 
if $\kappa(X)$ is positive, and is said to be {\em maximum} 
if $\kappa(X)$ is maximum among all node-subsets.

From a maximum violating cut, a steepest direction of $\omega$ at $p$ is obtained as follows.
For (original) node $i \in V$, 
let $V_i$ denote the set of nodes in $D_p$ 
replacing $i$ in this reduction process; see Figure~\ref{fig:Dp}.  
Let $V_i^+$ and $V_i^-$ denote the sets 
of nodes in $V_i$ having $+$ label and $-$ label, respectively. 
A node subset $X$ is said to be {\em movable} 
if $X \cap V_i = \emptyset$ or 
$\{ u^{+}\} \cup V_i^- \setminus \{u^-\}$ for some $u^+ \in V_i^+$.
For a movable cut $X$, 
the potential $p^X$ is defined by
\begin{equation}
(p^X)_i = \left\{
\begin{array}{ll}
\mbox{the neighbor of $p_i$ closer to $O$}
& {\rm if}\ X \cap V_i^+ = \{i^{O+}\}, \\
\mbox{the neighbor of $p_i$ in $G_s$ away from $O$} & {\rm if}\ X \cap V_i^+ = \{i^{s+}\}, \\
p_i & {\rm if}\ X \cap V_i = \emptyset.
\end{array}\right.
\end{equation}
\begin{figure}[t]
	\begin{center}
		\includegraphics[scale=1.0]{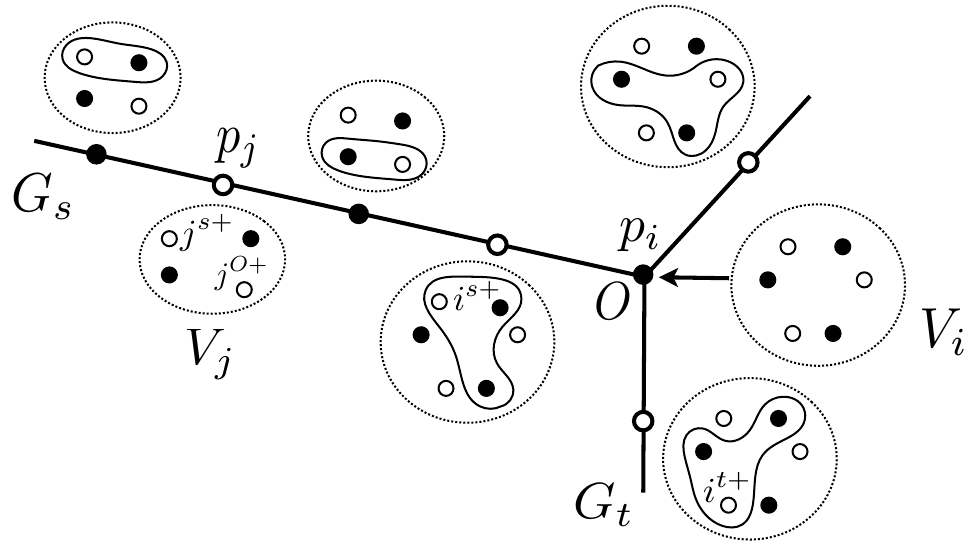}
		\caption{The correspondence 
			between movable cuts in $D_p$ and neighbors of $p$.
			There is a one-to-one correspondence 
			between $I_{p_i} \cup F_{p_i}$ 
			and $\{ X \cap V_i  \mid \mbox{$X$: movable cut}\}$, 
			where $X \cap V_i$ is surrounded by a closed curve, and
			nodes with $+$ label  and $-$ label are represented by white and black points, respectively. 
			By a movable cut $X$, a potential $p$ can be 
			moved to another potential $p^X$ 
			for which $(p^X)_i \in $
			$I_{p_i} \cup F_{p_i}$ $(i \in V)$. 
		}
		\label{fig:cut1}
	\end{center}
\end{figure}\noindent
See Figure~\ref{fig:cut1} for an intuition of $p^X$.
Let $V_I := \bigcup_{i: p_i \in W} V_i$ and $V_F := \bigcup_{i: p_i \in B} V_i$.
Since $a$ is integer-valued, edges between $V_I$ and $V_F$ have the same upper and lower capacity. This implies
$\kappa(X) = \kappa(X \cap V_I) + \kappa(X \cap V_F)$.
Thus, if $X$ is violating, then $X \cap V_I$ or $X \cap V_F$ is violating, 
and actually gives a steepest direction as follows.
\begin{Lem}[\cite{HH14extendable}]\label{lem:steepest1} Let $p$ be a nonoptimal potential.
For a minimal maximum violating cut $X$, both $X \cap V_I$ and $X \cap V_F$ are movable. 
Moreover, $p^{X \cap V_I}$ is a minimizer of $\omega$ over $I_p$ and 
		 $p^{X \cap V_F}$ is a minimizer of $\omega$ over $F_p$. 
\end{Lem}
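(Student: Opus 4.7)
The strategy is to establish an LP-duality-style identity $\omega(p^X) - \omega(p) = -\kappa(X)$ for every movable cut $X$, and then to maximize $\kappa$ separately on $V_I$ and on $V_F$. The identity is verified edge-by-edge against the definition $\kappa(X) = \underline{c}(\delta^-(X)) - \overline{c}(\delta^+(X))$: the $s^-s^+$-edges of $D_p$ track the $r(s) d(p_s, O)$ terms of $\omega$, edges derived from $E_=$ (with capacity window $[0, c(e)]$) track the slack $c(e) \max\{d(p_i, p_j) - a(ij), 0\}$ switching between $0$ and $1$ as $p$ moves to $p^X$, edges from $E_>$ (with tight capacity $[c(e), c(e)]$) track the corresponding decrease from a positive value, and the $E_-$-edges remain internal to each $V_i$ by movability and so contribute nothing.

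At each original node $i \in V$, a case analysis on whether $i \in S$ and whether $p_i = O$ identifies the clique/path structure of $V_i$ inside $D_p$ and shows that a subset $X \cap V_i$ with finite $\kappa$-contribution either is empty or contains the movable form $\{u^+\} \cup (V_i^- \setminus \{u^-\})$ for some $u^+ \in V_i^+$, possibly together with extra nodes. Finiteness of $\kappa$ forces the implication that if $u^+ \in X$ then every $v^-$ reachable from $u^+$ by an $E_-$-edge lies in $X$, because such edges have $\overline{c} = \infty$. Under $X \mapsto p^X$, the movable configurations at $V_i$ correspond bijectively to elements of $I_{p_i}$ when $p_i \in W$, and to $F_{p_i}$ when $p_i \in B$; taking the product over all $i$ gives a bijection between movable subsets of $V_I$ (resp.\ $V_F$) and $I_p$ (resp.\ $F_p$).

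Now let $X$ be a minimum maximum violating cut. If some $X \cap V_i$ were not movable, by the previous paragraph it would contain extras beyond a movable configuration; a direct check of the internal edges in $V_i$ shows that removing those extras yields a strictly smaller subset $X'$ with $\kappa(X') \geq \kappa(X)$, contradicting the minimality of $X$. Hence $X \cap V_I$ and $X \cap V_F$ are both movable. The additivity $\kappa(X) = \kappa(X \cap V_I) + \kappa(X \cap V_F)$ together with the maximality of $X$ implies that $X \cap V_I$ attains the maximum of $\kappa$ over all subsets of $V_I$, and likewise for $V_F$. Combining this with the bijection and the duality identity gives
\[
\omega(p^{X \cap V_I}) = \omega(p) - \kappa(X \cap V_I) = \min_{q \in I_p} \omega(q),
\]
and analogously for $F_p$, which is the desired conclusion.

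The main technical obstacle is the duality identity: it requires matching contributions for each edge type (particularly handling the piecewise-linearity of $\max\{\cdot, 0\}$ at $E_=$-edges, where the capacity window $[0, c(e)]$ must absorb jumps in either direction) and for each node type (terminal vs.\ nonterminal, $p_i = O$ vs.\ $p_i \neq O$). The minimality argument above is simpler in spirit but still needs a careful inspection of how extra nodes in $X \cap V_i$ affect $\kappa$ through the internal $E_-$-edges and, at terminals, through the $s^-s^+$-edges.
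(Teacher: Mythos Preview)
The paper itself does not prove this lemma (it only cites \cite{HH14extendable}), so let me evaluate your plan on its own merits. The architecture is right and is the intended one: the identity linking $\omega(p^{Y})-\omega(p)$ to a negative multiple of $\kappa(Y)$ for movable $Y$, the bijection between movable cuts contained in $V_I$ (resp.\ $V_F$) and the potentials in $I_p$ (resp.\ $F_p$), and the additivity $\kappa(X)=\kappa(X\cap V_I)+\kappa(X\cap V_F)$ together give the conclusion, once one knows that the minimal maximum violating cut is movable.

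There is, however, a real gap in your movability step. Your claim that a finite-$\kappa$ piece $X\cap V_i$ is ``empty or contains a movable form'' holds only when $X\cap V_i^+\neq\emptyset$; any nonempty subset of $V_i^-$ alone has finite $\kappa$ yet contains no nontrivial movable form (your removal argument does happen to dispose of this case, but the case split as written misses it). More seriously, when $|X\cap V_i^+|\geq 2$ (which, by the $\infty$-capacity of $E_-$-arcs, forces $V_i^-\subseteq X$), reducing to a movable form requires deleting some $i^{t+}$. But $i^{t+}$ has incoming $E_{=}/E_{>}$-arcs; if one of them is an $E_{>}$-arc (with $\underline c=c(e)>0$) from a node outside $X$, or originates from a node inside $X$, then deleting $i^{t+}$ strictly \emph{decreases} $\kappa$. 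So ``remove extras'' can fail here, and the argument as planned does not go through.

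The missing ingredient is the skew-symmetry of the double cover $D_p$. The involution $\sigma$ that swaps $u^+\leftrightarrow u^-$ sends each arc to the reverse of its partner with identical capacities (the terminal arcs $s^-s^+$ are fixed), so $\kappa\bigl(\,\overline{\sigma(X)}\,\bigr)=\kappa(X)$ for every $X$. Since $\kappa$ is supermodular (using $\overline c\geq\underline c$ on every arc), its maximizers form a sublattice; intersecting the minimal maximizer $X$ with $\overline{\sigma(X)}$ therefore forces $X\subseteq\overline{\sigma(X)}$, i.e., $X$ never contains both $u^+$ and $u^-$. Combined with the $\infty$-capacity constraint this immediately gives $|X\cap V_i^+|\leq 1$ and, when equal to $1$, exactly the movable pattern. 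Your removal argument then legitimately finishes the remaining case $X\cap V_i\subseteq V_i^-$.
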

Now SDA is specialized to MCMF as follows.
\begin{description}
	\item[{\bf Steepest Descent Algorithm for MCMF}]
	\item[Step 0:] Let $p := (O,O,\ldots,O)$.
	\item[Step 1:] Construct network $D_p$.
	\item[Step 2:] If a circulation $\varphi$ exists in $D_p$, 
	then obtain a $p$-feasible support $\psi$ by (\ref{eqn:psi}), 
	and an optimal multiflow $f^{\psi}$ via the path decomposition; stop.
	\item[Step 3:] For a minimal maximum violating cut $X$, 
	choose $p' \in \{p^{X \cap V_I}, p^{X \cap V_F}\}$ with $\omega(p') 
	= \min \{ \omega(p^{X \cap V_I}), \omega(p^{X \cap V_F})\}$, 
	let $p := p'$, and go to step 1.
\end{description}

\begin{Prop}[\cite{HH14extendable}]\label{prop:runin}
The above algorithm runs in $O(n A \cdot {\rm MF}(kn, m+ k^2 n))$ time.	
\end{Prop}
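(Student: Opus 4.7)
The plan is to multiply an iteration count with the per-iteration cost. The iteration count will follow from Theorem~\ref{thm:l_inf_bound} once we bound $d_\Delta((O,O,\ldots,O),\mathrm{opt}(\omega)) = O(nA)$. The per-iteration cost will be dominated by one maximum flow computation on $D_p$.

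For the iteration count, I would exhibit an optimal potential $p^*$ with $d(O,p_i^*) = O(nA)$ (in the edge-length-$1/2$ metric on $G$) for every $i$. A convenient way to do this is to start from any optimum and retract it: one can assume without loss of generality that $d(p_s^*,O) \le A$ for each terminal $s \in S$ (otherwise truncating the $G_s$-component of $p^*$ at distance $A$ from $O$ trades a decrease in the terminal term $r(s)d(p_s^*,O)$ against an equal or larger decrease in the penalty terms $c(ij)\max\{d(p_i^*,p_j^*)-a(ij),0\}$ for edges $ij$ incident to~$s$; this balancing is precisely the LP complementary slackness behind Proposition~\ref{prop:duality1}), and one can likewise assume $d(p_i^*,p_j^*) \le a(ij) \le A$ on every edge. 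Because the underlying graph is connected on $n$ nodes, each $i$ is joined to some terminal $s$ by a path of at most $n-1$ edges, and the tree-metric triangle inequality in $G$ yields $d(p_i^*, O) \le d(p_i^*, p_s^*) + d(p_s^*, O) \le (n-1)A + A = O(nA)$. Converting length to edge count (edges of $G$ have length $1/2$) gives $d_\Delta((O,\ldots,O),p^*) = O(nA)$, so by Theorem~\ref{thm:l_inf_bound} the algorithm terminates in $O(nA)$ iterations.

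For the per-iteration cost, constructing $D_p$ from the current potential takes $O(m+k^2n)$ time, since $D_p$ has $O(kn)$ nodes and $O(m+k^2n)$ arcs. By Lemmas~\ref{lem:optimality1} and \ref{lem:MF}, one maximum flow computation on $D_p$, running in $O(\mathrm{MF}(kn,m+k^2n))$ time, either produces a feasible circulation $\varphi$ from which $\psi$ and the half-integral optimal multiflow are recovered in Step~2, or, by max-flow/min-cut duality, produces a minimal maximum violating cut $X$ together with its partition into $X \cap V_I$ and $X \cap V_F$. Lemma~\ref{lem:steepest1} then identifies the two candidate steepest directions $p^{X\cap V_I}$ and $p^{X\cap V_F}$; constructing them and evaluating $\omega$ on both takes only $O(n+m)$ time. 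The dominant cost per iteration is therefore the single max-flow call, i.e., $O(\mathrm{MF}(kn,m+k^2n))$. Multiplying by the $O(nA)$ iteration bound gives the claimed $O(nA\cdot \mathrm{MF}(kn,m+k^2n))$ running time.

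The main obstacle is the iteration-bound step: Theorem~\ref{thm:l_inf_bound} itself is plug-and-play, but its $d_\Delta$-factor must be converted into the concrete estimate $O(nA)$ by exhibiting an optimal dual potential whose every coordinate stays within $O(nA)$ edges of $O$. This retraction argument — bounding terminal coordinates by $A$ via the LP optimality conditions implicit in Proposition~\ref{prop:duality1}, then propagating the bound to non-terminals by the tree-metric triangle inequality along a short path — is the one genuinely problem-specific ingredient; the remaining per-iteration bookkeeping and the final multiplication are routine.
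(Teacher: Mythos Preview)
Your overall plan is right, and the per-iteration analysis is fine. The gap is in how you bound $d_\Delta((O,\ldots,O),\opt(\omega))$: both retraction claims you rely on are false. The bound $d(p_s^*,O)\le A$ for terminals already fails on a path $s=v_0\!-\!v_1\!-\!\cdots\!-\!v_\ell=t$ with all $a(e)=c(e)=r(s)=r(t)=1$: every optimal potential has $d(p_s^*,O)+d(p_t^*,O)\ge\ell$, so one terminal sits at distance $\ge\ell/2\gg A=1$ from $O$. (Your truncation trade-off does not work here: for the natural optimum the incident penalty $\max\{d(p_s^*,p_{v_1}^*)-1,0\}$ is already $0$, so moving $p_s^*$ toward $O$ simply loses $r(s)$ in the reward.) The bound $d(p_i^*,p_j^*)\le a(ij)$ on every edge also fails: with $s,t$ joined by two internally-disjoint length-$2$ paths through $i$ and $j$, $a(si)=a(it)=1$, $a(sj)=a(jt)=10$, unit capacities, and $r(s)=r(t)=2$, the optimum is $22$, yet any potential with $d(p_s,p_i)\le 1$ and $d(p_i,p_t)\le 1$ forces $d(p_s,O)+d(p_t,O)\le 2$ and hence dual value $\le 4$. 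So no optimal potential satisfies $d\le a$ on all edges.

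The paper avoids both claims via a pigeonhole argument. If some coordinate $p_{i^*}$ of an optimal $p$ lies in $G_s$ at distance $>nA$ from $O$, then since at most $n$ of the $p_i$ lie on the ray $G_s$, some subpath $P\subseteq G_s$ of $2A$ edges contains none of them. Let $U$ be the set of nodes whose $p_i$ lies beyond $P$, and shift each such $p_i$ one step toward $O$: distances within one side of $P$ are unchanged; any edge $ij$ straddling $P$ has $d(p_i,p_j)>A\ge a(ij)$, so its penalty strictly drops; and the only terminal possibly in $U$ is $s$ itself, whose loss in the reward term is covered by $c(\delta(U))\ge r(s)$ (feasibility, since $U\cap S=\{s\}$). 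Hence $\omega$ does not increase, and iterating yields an optimal $p^*$ with $\max_i d(O,p_i^*)=O(nA)$. That is the argument you need in place of the two retraction claims.
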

\begin{proof}[Sketch]
	By Theorem~\ref{thm:l_inf_bound}, 
	it suffices to show $\max_i d(O, p_i) = O(n A)$ for some optimal potential $p$.
	Let $p$ be an arbitrary optimal potential.
Suppose that there is a node $i^*$ such that $p_{i^*} \in G_s$ and $d(p_{i^*},O) > n A$. 
Then there is a subpath $P$ of $G_s$ 
that has $2 A$ edges and no node $i$ with $p_i \in P$.	
Let $U (\ni i^*)$ be the set of nodes $i$ 
such that $p_i$ is beyond $P$ (on the opposite side to $O$).
For each $i \in U$,
replace $p_i$ by its neighbor closer to $O$. 
Then one can see that $\omega$ does not increase. 
By repeating this process,
we obtain an optimal potential as required.
\end{proof}
This algorithm can be improved to a polynomial time algorithm 
by a {\em domain scaling} technique.
For a scaling parameter $\ell = -1,0,1,2,\ldots, \lceil \log n A \rceil$,
let $G_{\ell}$ denote the graph on the subset of vertices $x$ of $G$ 
with $d(x,O) \in 2^{\ell} \ZZ$, where an edge exists between $x,y \in G_\ell$ 
if and only if $d(x,y) = 2^\ell$.
By modifying 
the restriction of $\omega$ to ${G_{\ell}}^n$, 
we can define $\omega_\ell: {G_\ell}^n \to \overline{\RR}$ with the following properties:
\begin{itemize}
	\item $\omega_\ell$ is L-convex on ${G_\ell}^n$.
	\item $\omega_{-1} = \omega$.
	\item If $x^*_\ell$ is a minimizer of $\omega_\ell$ over $G_\ell \subseteq G_{\ell-1}$, then $d_{\Delta}(x^{*}_\ell, \opt(\omega_{\ell-1})) = O(n)$, where $d_{\Delta}$ is defined for ${G_{\ell-1}}^n$ (with unit edge-length).
\end{itemize}
The key is the third property, 
which comes from a {\em proximity theorem} of L-convex functions~\cite{HH14extendable}.  
By these properties, a minimizer of $\omega$ can be found
by calling SDA $\lceil \log n A \rceil$ times, 
in which $x^*_\ell$ is obtained in $O(n)$ iterations in each scaling phase.
To solve a local $k$-submodular minimization problem for $\omega_{\ell}$, 
we use a network construction in \cite{IWY14}, 
different from $D_{p}$.
Then we obtain the algorithm in Theorem~\ref{thm:algo1}.

\subsection{Node-capacitated multiflow and node-multiway cut}\label{sub:multiway}
\subsubsection{Problem formulation}
Suppose that the network $N=(V,E,b,S)$ has 
a node-capacity $b: V \setminus S \to \RR_+$ instead of edge-capacity $c$, 
where a multiflow $f:{\cal P} \to \RR_+$ should satisfy 
the node-capacity constraint:
\begin{equation}\label{eqn:node_cap}
\sum \{ f(P) \mid  P \in {\cal P}: \mbox{$P$ contains node $i$} \} \leq b(i)
\quad (i \in V \setminus S).
\end{equation}
Let $n := |V|$, $m:= |E|$, and $k := |S|$ as before.
The {\em node-capacitated maximum multiflow problem (NMF)} asks to a find a multiflow $f:{\cal P} \to \RR_+$ of 
the maximum total flow-value 
$\sum_{P \in {\cal P}} f(P)$.
This problem first appeared in 
the work of Garg, Vazirani, and Yannakakis~\cite{GVY04} 
on the node-multiway cut problem.
A {\em node-multiway cut} 
is a node subset $X \subseteq V \setminus S$ such that every $S$-path meets $X$. 
The {\em minimum node-multiway cut problem (NMC)} is 
the problem of finding a node-multiway cut $X$ of minimum capacity $\sum_{i \in X} b (i)$.
The two problems NMF and NMC are closely related.
Indeed, consider the LP-dual of NMF, which is given by
\begin{eqnarray*}
{\rm Min.}  && \sum_{i \in V \setminus S} b(i) w(i) \\
{\rm s.t.} && \sum \{ w(i) \mid \mbox{$i \in V \setminus S$: $P$ contains node $i$} \} \geq 1 \quad  (P: \mbox{$S$-path}), \\
 && \hspace{4.76cm} w(i) \geq 0 \quad  (i \in V \setminus S).
\end{eqnarray*}
Restricting $w$ to be 0-1 valued,  
we obtain an IP formulation of NMC.  
Garg, Vazirani, and Yannakakis~\cite{GVY04} proved the half-integrality of this LP, 
and showed a 2-approximation algorithm for NMC 
by rounding a half-integral solution; see also \cite{Vazirani}.
The half-integrality of the primal problem NMF was shown by Pap~\cite{Pap07STOC,Pap08EGRES}; 
this result is used to solve the integer version of NMF in strongly polynomial time.
These half-integral optimal solutions are obtained by 
the ellipsoid method in strongly polynomial time.

It is a natural challenge to develop  an ellipsoid-free algorithm.
Babenko~\cite{Babenko10} developed a combinatorial 
polynomial time algorithm for NMF in the case of unit capacity. 
For the general case of capacity, 
Babenko and Karzanov~\cite{BK08ESA} developed 
a combinatorial weakly polynomial time algorithm for NMF. 
As an application of L-convex functions 
on a twisted tree-grid,  
the paper \cite{HH15node_multi} developed the first strongly polynomial time combinatorial algorithm:
\begin{Thm}[\cite{HH15node_multi}]\label{thm:algo2}
	A half-integral optimal multiflow for NMF, 
	a half-integral optimal solution for its LP-dual, 
	and a $2$-approximate solution for NMC can be obtained in 
	$O(m (\log k) {\rm MSF}(n,m,1))$ time.
\end{Thm}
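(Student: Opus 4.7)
The plan is to parallel the SDA-based blueprint established for MCMF in Section~\ref{sub:backup}, but with the tree-grid replaced by a twisted tree-grid so as to accommodate the node-capacity structure of NMF. First, I would derive an LP-dual formulation of NMF whose optimum is expressed as the minimum of a function $\omega$ on the vertex set of $(G \boxtimes H)^n$ for suitable infinite trees $G$ and $H$ without degree-one vertices. The construction of $G, H$ from the network should mirror the subdivided-star construction of Section~\ref{sub:backup}: starting from the Garg--Vazirani--Yannakakis node-weight dual, one uncrosses a fractional dual solution to extract a laminar-type structure together with the twisting information that records how cuts interact around terminals, and encodes this as a potential $p = (p_1,\dots,p_n)$ in the twisted tree-grid, where $p_i$ records the combinatorial location of node $i$ in the encoded cut structure. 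The half-integrality results of GVY and Pap justify that rational step-size $1/2$ on each tree suffices.

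Next, I would verify the L-convexity of $\omega$ on $(G \boxtimes H)^n$ by a componentwise check of the discrete midpoint inequality \eqref{eqn:midpoint2}; each term of $\omega$ is either a path-metric term in $G \boxtimes H$, which obeys midpoint convexity by an elementary geodesic argument, or an indicator of the admissible potential set. With L-convexity in hand, Lemmas~\ref{lem:L-optimality2} and~\ref{lem:local_2}, together with Theorem~\ref{thm:l_inf_bound}, reduce global minimization to iterated $(\pmb k,\pmb l)$-submodular minimizations on principal ideals and filters. I would then construct an auxiliary directed network $D_p$ analogous to the one in Section~\ref{sub:backup}, on which complementary slackness between a primal multiflow and $p$ is encoded as feasibility of a (node-splitting) submodular flow, with the node-capacities $b$ appearing as the capacities of the newly added node-copy arcs. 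A violating cut of this submodular-flow instance witnesses nonoptimality and decomposes into cuts in $V_I$ and $V_F$ that yield a steepest direction at $p$ via the neighbor correspondence in $I_p \cup F_p$; a feasible submodular flow, when it exists, yields by a path decomposition in the bidirected extension a half-integral optimal multiflow for NMF.

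Finally, for the running time $O(m (\log k)\,\mathrm{MSF}(n,m,1))$, the key quantitative input is a bound of $O(m \log k)$ on the total number of SDA iterations. This would follow from Theorem~\ref{thm:l_inf_bound} combined with a domain-scaling scheme in the twisted tree-grid: scaling the underlying trees by powers of two produces L-convex relaxations $\omega_\ell$, and a proximity theorem bounds $d_\Delta(x^*_\ell, \opt(\omega_{\ell-1}))$ by $O(m)$, so that each of the $O(\log k)$ scaling phases completes in $O(m)$ iterations, each invoking one submodular-flow subroutine. A half-integral optimal LP-dual solution is read directly off the optimal potential, and a $2$-approximation for NMC is obtained by the standard GVY threshold rounding of this half-integral dual. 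I expect the main obstacle to be the correct identification of the twisted tree-grid associated with the node-capacitated dual --- specifically, the combinatorial encoding that makes $(\pmb k,\pmb l)$-submodularity (rather than mere $\pmb k$-submodularity) the right localization --- and the accompanying verification that the steepest-direction subproblem on principal ideals and filters is solvable by a single submodular-flow call in $\mathrm{MSF}(n,m,1)$ time.
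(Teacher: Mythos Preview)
Your overall architecture is right---SDA on a twisted tree-grid, with the local step reduced to a submodular-flow feasibility problem---but two of your key design choices diverge from the paper, and one of them is a genuine gap.

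First, the iteration bound. You propose to obtain the $O(m\log k)$ bound via domain scaling, with $O(\log k)$ phases of $O(m)$ iterations each. But you give no reason why the number of scaling phases should be $O(\log k)$; in the MCMF analogue the number of phases was $\lceil\log nA\rceil$, governed by the magnitude of the optimal potential, and there is no parameter here that would naturally produce $\log k$ levels. The paper does \emph{not} scale at all. Instead it perturbs NMF to an auxiliary problem PNMF whose objective weights $d(v_{s_P},v_{t_P})$ come from a carefully built tree $G$: take a degree-$3$ tree $\Sigma$ with the $k$ terminals as leaves and diameter $\lceil\log k\rceil$, attach an infinite ray at each leaf, and place terminal $s$ at distance $(2m+1)\lceil\log k\rceil$ along its ray. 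The $\log k$ factor is thus baked into the \emph{geometry} of $G$, and one shows directly (as in the proof of Proposition~\ref{prop:runin}) that some optimal potential satisfies $r_i^*,\,d(v,p_i^*)=O(m\log k)$, so a single run of SDA from the natural initial point terminates in $O(m\log k)$ iterations by Theorem~\ref{thm:l_inf_bound}. Your scaling scheme, as stated, does not reach this bound.

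Second, the shape of the twisted tree-grid. You leave $H$ unspecified (``suitable infinite trees $G$ and $H$''); in the paper the second factor is simply $\ZZ$, encoding a nonnegative ``radius'' $r_i$ for each node, so the domain is $(G\boxtimes\ZZ)^n$ and the dual objective is $\varpi(p,r)=\sum_i 2b(i)r_i$ plus the indicator of the potential constraints $d(p_i,p_j)-r_i-r_j\le 2$. The dual is not obtained by uncrossing the GVY LP; it is the combinatorial duality for tree-shaped node-capacitated multiflows from~\cite{HH13tree_shaped}, applied to PNMF. You are right that the local problem is a submodular flow rather than a max flow: the obstruction is the constraint $-\psi(i^1i^2)-\psi(i^2i^3)-\psi(i^1i^3)\le b(i)$ at special nodes (those with $p_i$ of degree~$3$ and $r_i=0$), which is not bidirected but can be encoded by a small bisubmodular/submodular constraint on the six split copies of $i$ in the doubled network $D_{p,r}$.
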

The algorithm uses a submodular flow algorithm as a subroutine.  
Let ${\rm MSF}(n,m,\gamma)$ denote the time complexity of 
solving the {\em maximum submodular flow problem} 
on a network of $n$ nodes and $m$ edges, where $\gamma$ is the time complexity of computing the {\em exchange capacity} of the defining submodular set function.
We briefly summarize the submodular flow problem; see \cite{FrankBook, FujiBook} for detail. 
A {\em submodular set function} on a set $V$
is a function $h: 2^V \to \overline\RR$ satisfying
\begin{equation*}
h(X) + h(Y) \geq h(X \cap Y) + h(X \cup Y) \quad (X,Y \subseteq V).
\end{equation*}
Let $N = (V, A, \underline{c}, \overline{c})$ be a directed network 
with lower and upper capacities $\underline{c}, \overline{c}: A \to \RR$, and
let $h:2^V \to \RR$ be a submodular set function on $V$ with $h(\emptyset) = h(V) = 0$.
A {\em feasible flow} with respect to $h$ is a function 
$\varphi: A \to \RR$ satisfying
\begin{eqnarray*}\underline{c}(e) \leq \varphi(e) 
\leq \overline{c}(e) \hspace{0.1cm} &\quad& (e \in A), \\
\varphi (\delta^-(X)) - \varphi (\delta^+(X)) \leq h(X) &\quad& (X \subseteq V). 
\end{eqnarray*}
For a feasible flow $\varphi$ and a pair of nodes $i,j$, 
the {\em exchange capacity} is defined as the minimum of
\[
h(X) - \varphi (\delta^-(X)) + \varphi (\delta^+(X)) \quad (\geq 0)
\]
over all $X \subseteq V$ with $i \in X \not \ni j$.

The maximum submodular flow problem (MSF) asks 
to find a feasible flow $\varphi$ having maximum $\varphi(e)$ for a fixed edge $e$.
This problem obviously generalizes the maximum flow problem. 
Generalizing existing maximum flow algorithms, 
several combinatorial algorithms for MSF 
have been proposed; see \cite{FI00survey} for survey.
These algorithms assume an oracle of 
computing the exchange capacity (to construct the residual network).
The current fastest algorithm for MSF is 
the pre-flow push algorithm by Fujishige-Zhang~\cite{FZ92}, 
where the time complexity is $O(n^3\gamma)$.
Thus, by using their algorithm, 
the algorithm in Theorem~\ref{thm:algo2} 
runs in $O(m n^3 \log k)$ time.

\subsubsection{Algorithm}

Let $N = (V,E,b,S)$ be a network.
For several technical reasons, instead of NMF, 
we deal with a {\em perturbed} problem.
Consider a small uniform edge-cost on $E$. 
It is clear that the objective function of NMF 
may be replaced by
$\sum_{P \in {\cal P}} M f(P) - \sum_{e \in E} 2 f(e)$ 
for large $M > 0$. 
We further purturbe $M$ 
according to terminals which $P$ connects.
Let $\Sigma$ be a tree such that each non-leaf vertex has degree 3,  
leaves are $u_s$ $(s \in S)$, and
the diameter is at most $\lceil \log k \rceil$.
For each $s \in S$, consider an infinite path $P_s$ with one end vertex $u_s'$, and
glue $\Sigma$ and $P_s$ by identifying $u_s$ and $u'_s$.
The resulting tree is denoted by $G$, where the edge-length is defined as $1$ uniformly, 
and the path-metric on $G$ is denoted by $d$.
Let $v_s$ denote the vertex in $P_s$ with $d(u_s',v_s) = (2 |E| + 1) \lceil \log k \rceil$.
The perturbed problem PNMM is to maximize
\begin{equation*}
\sum_{P \in {\cal P}} d(v_{s_P}, v_{t_P}) f(P) - \sum_{e \in E} 2 f(e)
\end{equation*}
over all multiflows $f:{\cal P} \to \RR_+$, 
where $s_P$ and $t_P$ denote the ends of an $S$-path $P$.

\begin{Lem}[{\cite{HH15node_multi}}]
Any optimal multiflow for PNMF
is optimal to NMF.
\end{Lem}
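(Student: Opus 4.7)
The plan is a standard big-$M$ perturbation argument. The crucial preliminary is a geometric decomposition of the PNMF weight using the tree structure of $G$: the unique $v_{s_P}$--$v_{t_P}$ geodesic in $G$ ascends the pendant path $P_{s_P}$ (length $(2|E|+1)\lceil \log k \rceil$), crosses $\Sigma$ from $u_{s_P}$ to $u_{t_P}$ (length $d_\Sigma(u_{s_P},u_{t_P}) \in [1, \lceil \log k \rceil]$), and then descends $P_{t_P}$, so
\begin{equation*}
d(v_{s_P}, v_{t_P}) = M + d_\Sigma(u_{s_P}, u_{t_P}), \qquad M := 2(2|E|+1)\lceil \log k \rceil.
\end{equation*}
Writing $F(f) := \sum_P f(P)$ for the NMF value and $R(f) := \sum_P d_\Sigma(u_{s_P}, u_{t_P}) f(P) - 2\sum_e f(e)$ for the perturbation, the PNMF objective factors as $M\cdot F(f) + R(f)$, with $M\cdot F$ intended to strictly dominate the bounded term $R$.

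The next step is to bound $R$ and invoke half-integrality. Since any cycle flow strictly decreases $-2\sum_e f(e)$ without changing $F$, a PNMF-optimum may be taken to be supported on simple $S$-paths, whence $1 \leq |P|_E \leq n-1$ and $(3-2n)F(f) \leq R(f) \leq (\lceil \log k \rceil -2)F(f)$. By Pap's half-integrality theorem \cite{Pap07STOC,Pap08EGRES} applied to the common feasible polytope of NMF and PNMF, extreme optima $f^*$ of NMF and $f^{**}$ of PNMF can be chosen half-integral; hence $F^*:=F(f^*)$ and $F(f^{**})$ are half-integers, and a strict inequality $F(f^{**}) < F^*$ would force $F^* - F(f^{**}) \geq 1/2$. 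Under this hypothesis,
\begin{equation*}
\mathrm{PNMF}(f^*) - \mathrm{PNMF}(f^{**}) = M \bigl(F^* - F(f^{**})\bigr) + R(f^*) - R(f^{**}) \geq \tfrac{M}{2} - \bigl|R(f^*)\bigr| - \bigl|R(f^{**})\bigr|,
\end{equation*}
and the pendant-path length $(2|E|+1)\lceil\log k\rceil$ was chosen precisely so that $M/2$ strictly dominates the worst-case perturbation differential on half-integral simple-path supported flows, producing $\mathrm{PNMF}(f^*) > \mathrm{PNMF}(f^{**})$ and contradicting the PNMF-optimality of $f^{**}$. Therefore $F(f^{**}) = F^*$, i.e., $f^{**}$ is optimal for NMF.

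The substantive obstacle is the rigorous verification of the dominance $M/2 > |R(f^*)| + |R(f^{**})|$. A crude estimate $|R(f)| = O(nF(f))$ combined with $F^* \leq \sum_i b(i)$ is not by itself enough, because the total node capacity can be arbitrarily large compared to $M$. Sharpening the bound requires exploiting the node-capacity identity $\sum_e f(e) \leq F(f) + \sum_i b(i)$ in tandem with an augmenting-bi-path exchange between $f^*$ and $f^{**}$ that isolates the $1/2$-unit gap in $F$ and localises the perturbation difference to a range genuinely controlled by $(2|E|+1)\lceil \log k \rceil$. All other steps (the tree-metric decomposition, the reduction to simple paths, and the appeal to half-integrality) are routine bookkeeping.
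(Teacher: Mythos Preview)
Your decomposition $\mathrm{PNMF}(f) = M\cdot F(f) + R(f)$ with $M = 2(2|E|+1)\lceil\log k\rceil$ is correct and is the natural first step. But the proof is not complete, and you effectively say so in the last paragraph. The dominance inequality $M/2 > |R(f^*)| + |R(f^{**})|$ that you need is simply false in general: for simple-path flows $|R(f)|$ is of order $|E|\cdot F(f)$, and $F(f)$ can be as large as $\sum_i b(i)$, which is not bounded in terms of $|E|$ and $k$. Half-integrality gives $F^* - F(f^{**}) \geq 1/2$ and hence a gain of $M/2$ in the leading term, but that is still swamped by a perturbation difference of order $|E|\cdot F^*$. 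The identity $\sum_e f(e) \leq F(f) + \sum_i b(i)$ does not help for the same reason, and the ``augmenting-bi-path exchange'' is named rather than executed.

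Two smaller points. Applying Pap's theorem to PNMF requires that all \emph{vertices} of the node-capacitated multiflow polytope be half-integral (so that any linear objective, not just $F$, attains a half-integral optimum); this is stronger than ``NMF has a half-integral optimum'' and should be stated. Also, the lemma concerns \emph{every} PNMF-optimum; once the claim holds for all extreme optima it extends to the whole optimal face by linearity of $F$, but that step should be mentioned.

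The paper gives no proof here, deferring to \cite{HH15node_multi}. What is actually required is a local rather than a global comparison: from any feasible $f$ with $F(f) < F^*$ one must exhibit a feasible direction along which $F$ increases and the change in $R$ per unit increase of $F$ is bounded independently of $b$ --- by something like $2|E| + \lceil\log k\rceil < M$ --- so that PNMF strictly increases, contradicting optimality. Establishing such a bounded-cost augmenting direction is the genuine content of the lemma, and it is precisely the step your sketch leaves undone.
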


Next we explain a combinatorial duality of PNMM, 
which was earlier obtained by \cite{HH13tree_shaped}
for more general setting.
Consider the {\em edge-subdivision} $G^*$ of $G$, 
which is obtained from $G$
by replacing each edge $pq$ 
by a series of two edges $pv_{pq}$ and $v_{pq}q$ with a new vertex $v_{pq}$. 
The edge-length of $G^*$ is defined as $1/2$ uniformly, where
$G$ is naturally regarded as an isometric subspace of $G^*$ (as a metric space). 
Let $\ZZ^* := \{ z/2 \mid z \in \ZZ\}$ denote the set of half-integers.
Suppose that $V = \{1,2,\ldots,n\}$.
Consider the product $(G^* \times \ZZ^*)^n$.
An element $(p,r) = ((p_1,r_1),(p_2,r_2),\ldots,(p_n,r_n)) \in (G^* \times \ZZ^*)^n = (G^*)^n \times (\ZZ^*)^n$ is called a {\em potential} if
\begin{eqnarray*}
r_i \geq 0 \hspace{0.69cm} &\quad &  (i \in V), \\
d(p_i,p_j) - r_i - r_j \leq 2 \hspace{0.69cm} &\quad& (ij \in E), \\ 
(p_s,r_s) = (v_s,0) &\quad & (s \in S).
\end{eqnarray*}
and 
each $(p_i,r_i)$ belongs to $G \times \ZZ$ or 
$(G^* \setminus G) \times (\ZZ^* \setminus \ZZ)$.
Corresponding to Proposition~\ref{prop:duality1}, the following holds:
\begin{Prop}[{\cite{HH13tree_shaped}}]
	The optimal value of PNMF is equal to the minimum of  
$\displaystyle \sum_{i \in V \setminus S} 2 b(i) r_i$
over all potentials $(p,r)$.
\end{Prop}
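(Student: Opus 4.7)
The plan is to establish this min-max equality by combining LP-duality with an explicit construction of a potential from an optimal dual solution. Writing $c_P := d(v_{s_P}, v_{t_P}) - 2|E(P)|$ and using $\sum_{e \in E} f(e) = \sum_P |E(P)| f(P)$, PNMF becomes the linear program of maximizing $\sum_P c_P f(P)$ over $f : \mathcal{P} \to \RR_+$ subject to the node-capacity constraints $\sum_{P \ni i} f(P) \leq b(i)$ for $i \in V \setminus S$. Its LP-dual reads
\[
\min \Bigl\{\, \sum_{i \in V \setminus S} b(i) w(i) \,\Bigm|\, w \geq 0,\ \sum_{i \in V(P) \setminus S} w(i) \geq c_P \text{ for every $S$-path } P \,\Bigr\}.
\]

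For the easy direction ($\leq$), given any potential $(p,r)$ I would set $w(i) := 2 r_i$ and verify dual feasibility directly. For an $S$-path $P = (s = i_0, i_1, \ldots, i_\ell = t)$, summing the defining inequality $d(p_{i_{k-1}}, p_{i_k}) - r_{i_{k-1}} - r_{i_k} \leq 2$ over $k = 1, \ldots, \ell$, invoking $r_s = r_t = 0$ (from $(p_s,r_s)=(v_s,0)$) and the triangle inequality $\sum_k d(p_{i_{k-1}}, p_{i_k}) \geq d(p_s, p_t) = d(v_{s_P}, v_{t_P})$, yields $2\sum_{i \in V(P) \setminus S} r_i \geq d(v_{s_P},v_{t_P}) - 2\ell = c_P$. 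The dual objective $\sum_i b(i) w(i)$ then equals the potential objective $2\sum_i b(i) r_i$, and LP weak duality concludes.

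For the reverse direction ($\geq$), I would take an LP-dual optimum $w^*$ and build a potential $(p,r)$ of matching objective. Set $r_i := w^*(i)/2$ (with $r_s := 0$ on terminals). By the known half-integrality of this class of LPs (cf.~\cite{GVY04} and the perturbed version in \cite{HH13tree_shaped}), $w^*$ may be assumed half-integral, so each $r_i \in \ZZ^*$. The vertex $p_i \in V(G^*)$ is then read off from suitable shortest distances in $G$ with node weights $r$: for each $i$ one identifies a terminal $s^* = s^*(i)$ together with an $i$-to-$s^*$ walk in $G$ realizing the minimum weighted length, and places $p_i$ on the infinite ray $P_{s^*}$ at the corresponding distance from $v_{s^*}$ (or inside the finite subtree $\Sigma$ when no unique nearest terminal exists). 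The constraint $\sum_i w^*(i) \geq c_P$ for $P$ ranging over $S$-paths translates, via a shortest-walk argument, into the compatibility conditions that make $p$ a well-defined placement.

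The main obstacle, handled by the tree-shaped-duality machinery developed in~\cite{HH13tree_shaped}, will be to verify simultaneously (i)~the edge inequality $d(p_i, p_j) \leq r_i + r_j + 2$ for every $ij \in E$, (ii)~the parity coupling $(p_i, r_i) \in (G \times \ZZ) \cup ((G^* \setminus G) \times (\ZZ^* \setminus \ZZ))$, and (iii)~a consistent choice of branch of $G^*$ for each~$p_i$. The half-integrality of $w^*$ is precisely what allows the parity condition to be satisfied, and the tree structure of $G$ (together with the uniform subdivision $G^*$) is what makes the branch-assignment step well-defined; the triangle inequality built into the $\sigma_s$-type distances then yields the edge inequality. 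Since the constructed $(p,r)$ has objective $2\sum_i b(i) r_i = \sum_i b(i) w^*(i)$ equal to the LP optimum, the $\geq$ direction follows, completing the proof.
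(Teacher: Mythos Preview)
The paper itself does not prove this proposition; it is stated with a citation to \cite{HH13tree_shaped} and no proof or proof sketch is supplied in the present article. So there is no in-paper argument to compare your proposal against directly.

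That said, your plan is sound in outline and matches the spirit of the analogous sketch given for Proposition~\ref{prop:duality1}. The LP-duality reduction and the weak-duality direction are correct as you state them: the chain
\[
d(v_{s_P},v_{t_P}) \;\le\; \sum_{k=1}^{\ell} d(p_{i_{k-1}},p_{i_k}) \;\le\; 2\sum_{i\in V(P)\setminus S} r_i + 2\ell
\]
gives exactly the dual constraint for $w=2r$, and the objectives agree. For the converse, fixing $r_i=w^*(i)/2$ from a half-integral dual optimum and then manufacturing the tree positions $p_i$ is the right shape of argument, and it is indeed what the cited reference does. However, your description of how $p_i$ is actually chosen is only a gesture: ``read off from suitable shortest distances'' does not by itself explain why the edge inequality $d(p_i,p_j)\le r_i+r_j+2$ will hold, nor how the parity coupling $(p_i,r_i)\in (G\times\ZZ)\cup((G^*\setminus G)\times(\ZZ^*\setminus\ZZ))$ is enforced simultaneously with a consistent branch assignment when several terminals are equidistant. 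Those are precisely the places where the nontrivial work lives in \cite{HH13tree_shaped}, and you correctly flag them---but as written your proposal defers the substantive content of the hard direction to that reference rather than supplying it.
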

A vertex $(v_{pq}, z + 1/2)$ in $(G^* \setminus G) \times (\ZZ^* \setminus \ZZ)$
corresponds to a 4-cycle $(p,z),(p,z+1),(q,z+1),(q,z)$ in $G \times \ZZ$.
Thus any potential 
is viewed as a vertex of 
a twisted tree-grid $(G \boxtimes \ZZ)^n$.
Define $\varpi:(G \boxtimes \ZZ)^n \to \overline{\RR}$ by
\begin{equation*}
\varpi(p,r) := \sum_{i \in V \setminus S} 2 b(i) r_i + I(p,r) \quad 
((p,r) \in (G \boxtimes \ZZ)^n),
\end{equation*}
where $I$ is the indicator function of the set of all potentials.
\begin{Thm}[{\cite{HH15node_multi}}]
$\varpi$ is L-convex on $(G \boxtimes \ZZ)^n$.	
\end{Thm}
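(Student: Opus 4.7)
The plan is to invoke Theorem~\ref{thm:convex}. Since $(G \boxtimes \ZZ)^n$ is a Euclidean building of type C (its apartments are products $(P_1 \boxtimes \ZZ) \times \cdots \times (P_n \boxtimes \ZZ) \simeq \ZZ^{2n}$), L-convexity of $\varpi$ is equivalent to convexity of the Lov\'asz extension $\bar\varpi$ on the CAT(0) realization $K((G \boxtimes \ZZ)^n)$, together with chain-connectedness of $\dom \varpi$.

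I split $\varpi = L + I_{\Pi}$, where $L(p,r) := \sum_{i \in V \setminus S} 2 b(i) r_i$ is linear and $I_{\Pi}$ is the $\{0,\infty\}$-indicator of the set $\Pi$ of potentials. The first move is to identify $K(G \boxtimes \ZZ)$ with the metric product $|G^*| \times \RR$: each apartment $P \boxtimes \ZZ$ realizes as the Euclidean plane $|P^*| \times \RR \simeq \RR^2$, and the $r$-coordinate becomes the projection onto the $\RR$-factor. Under the resulting identification $K((G \boxtimes \ZZ)^n) \cong (|G^*| \times \RR)^n$, the extension $\bar L$ is affine on every apartment, hence CAT(0)-convex. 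It therefore remains to prove that the realization of $\Pi$ is a convex subset.

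Under the same identification, $\Pi$ is cut out by three families of conditions: (i) the half-spaces $r_i \geq 0$; (ii) the singleton constraints $(p_s, r_s) = (v_s, 0)$ for $s \in S$; and (iii) the sublevel sets $\{d_{G^*}(p_i, p_j) - r_i - r_j \leq 2\}$ for each edge $ij \in E$. The first two are evidently convex. For the third I appeal to the standard CAT(0) fact that distance is a convex function on the product of a CAT(0) space with itself, which applies to the metric tree $|G^*|$; subtracting the affine functional $r_i + r_j$ preserves convexity, so each sublevel set is convex. As a finite intersection of convex subsets, the realization of $\Pi$ is convex, and chain-connectedness of $\Pi$ then follows by discretizing a CAT(0) geodesic between two potentials into a sequence of alternating $\preceq$-comparable vertices of $\Pi$.

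The main obstacle is the isometric identification $K(G \boxtimes \ZZ) \cong |G^*| \times \RR$: one has to verify that the triangulation of each apartment (Figure~\ref{fig:KGXH}) induces precisely the Euclidean product metric and that the $r$-coordinate is consistently defined across apartments sharing vertices. Once this identification is in place, the CAT(0) convexity of $d_{G^*}$ drives the rest of the argument. A more combinatorial alternative would be to verify (\ref{eqn:midpoint2}) directly: $L$ satisfies midpoint equality because $(r \bullet r')_i + (r \circ r')_i = r_i + r'_i$ coordinate-by-coordinate, and $\Pi$ is closed under $\bullet$ and $\circ$ by reducing each edge constraint, on an apartment $P \boxtimes \ZZ$ parametrized in $45^\circ$-rotated coordinates $(a,b) = (p+r, p-r)$, to a linear inequality of the form $b_j - a_i \leq 2$, which is preserved under coordinatewise $\min$ and $\max$; tree-additivity of $d_G$ then handles branches of $G$ that separate $p_i$ from $p_j$.
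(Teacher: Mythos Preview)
The paper does not supply its own proof of this theorem; it is stated with a citation to~\cite{HH15node_multi}. So there is no in-paper argument to compare against, and your proposal has to be judged on its own merits. Both of your approaches are reasonable outlines, but each has a genuine gap that you would need to close.

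In your primary route via Theorem~\ref{thm:convex}, the step ``the realization of $\Pi$ is a convex subset'' is where the argument slips. What you actually need is that the subcomplex $\{\bar I_\Pi=0\}$, i.e.\ the union of all simplices whose \emph{vertices} lie in $\Pi$, is geodesically convex in $K((G\boxtimes\ZZ)^n)$. What you prove instead is that the \emph{continuous} set $\tilde\Pi:=\{(p,r):r_i\ge 0,\ d_{G^*}(p_i,p_j)-r_i-r_j\le 2,\ (p_s,r_s)=(v_s,0)\}$ is CAT(0)-convex. These two sets coincide only if each bounding constraint is simplicial, i.e.\ its boundary is a subcomplex of the building triangulation. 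That is not automatic: it depends on the fact that in apartment coordinates $(a,b)=(p+r,p-r)$ the relevant hyperplanes are of the form $a_i-b_i=0$ and $b_i-a_j=2$, which are walls of the type~C Coxeter complex (differences equal to an even integer). You do not verify this, and the case where $p_i$ and $p_j$ range over the \emph{same} edge of $G$ (so that $d_{G^*}(p_i,p_j)=|p_i-p_j|$ is not affine on the simplex) also needs a separate word, even though the constraint is vacuous there since $d\le 1\le r_i+r_j+2$. The identification $K((G\boxtimes\ZZ)^n)\cong(|G^*|\times\RR)^n$ you flag as the ``main obstacle'' is in fact the lesser issue; the realization-versus-sublevel-set mismatch is the real one.

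Your combinatorial alternative is closer to what a direct proof in~\cite{HH15node_multi} would look like and is correct for the linear part and for the case where $p_i,p'_i,p_j,p'_j$ all lie on a common infinite path in $G$. But ``tree-additivity of $d_G$ then handles branches'' is not a proof: when the four points span a genuine tripod in $G$, the paths $P_i\ni p_i,p'_i$ and $P_j\ni p_j,p'_j$ used to compute the two midpoints need not coincide, and you must show that $d_{G^*}(p''_i,p''_j)-r''_i-r''_j\le 2$ still holds. One clean way is to fix the median $m$ of (say) $p_i,p'_i$ and the gate of $\{p_j,p'_j\}$ onto $P_i$, split $d(p_i,p_j)=d(p_i,m)+d(m,p_j)$, and reduce to two single-path inequalities of the form $b-a\le c$; but this reduction has to be written out, and the parity bookkeeping you sketch (that $\bullet,\circ$ round each midpoint toward the even integer) is exactly what makes the single-path case go through.
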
	

As in the previous subsection, 
we are going to apply the SDA framework to $\varpi$, 
and show that a steepest direction at $(p,r) \in (G \boxtimes \ZZ)^n$ can be obtained 
by solving a submodular flow problem 
on network $D_{p,r}$ associated with~$(p,r)$.
The argument is 
parallel to the previous subsection but is technically more complicated.

Each vertex $u \in G$ has two or three neighbors in $G$, 
which are denoted by $u^\alpha$ for $\alpha =1,2$ or $1,2,3$.
Accordingly, the neighbors in $G^*$
are denoted by $u^{*\alpha}$,   
where $u^{*\alpha}$ is the vertex replacing edge $uu^{\alpha}$.
Let $G_3 \subseteq G$ 
denote the set of vertices having three neighbors. 

Let $(p,r)$ be a potential.
Let $E_{=}$ denote the set of edges $ij$ 
with $d(p_i,p_j) - r_i - r_j = 2$.
Remove other edges.
For each nonterminal node $i$, 
replace $i$ by two nodes $i^1,i^2$ if $p_i \not \in G_3$ 
and by three nodes $i^1,i^2,i^3$ 
if  $p_i \in G_3$.
Add new edges $i^\alpha i^\beta$ for distinct $\alpha,\beta$.
The set of added edges is denoted by $E_{-}$.
For $ij \in E_{=}$, 
replace each edge $ij \in E_{=}$ 
by $i^\alpha j^\beta$ for $\alpha,\beta \in \{1,2,3\}$ with
$d(p_i, p_j) = d(p_i,(p_i)^{*\alpha}) + d((p_i)^{*\alpha}, (p_j)^{*\beta}) + d((p_j)^{*\beta}, p_j)$.
Since $d(p_i,p_j) \geq 1$ and $G^*$ is a tree, 
such neighbors $(p_i)^{*\alpha}$ and $(p_j)^{*\beta}$ are uniquely determined.
If $i = s \in S$, the incidence of $s$ is unchanged, i.e., 
let $i^\alpha = s$ in the replacement.
An edge-weight $\psi: E_{=} \cup E_{-} \to \RR$ is called 
a {\em $(p,r)$-feasible support} 
if it satisfies
\begin{eqnarray}
\psi(e) \geq 0  \hspace{0.39cm} && (e \in E_{=}), \label{eqn:psi=>0}\\
\psi(e) \leq 0 \hspace{0.39cm} && (e \in E_{-}),  \label{eqn:psi<=0}\\
 \psi (\delta(i^{\alpha})) = 0 \hspace{0.39cm} && (\alpha \in \{1,2,3\}), \label{eqn:psi(delta)=0}\\
 - \psi(i^1 i^2) \leq b(i) && 
 (p_i \not \in G_3, r_i = 0), \label{eqn:node_cap2=}\\
   -\psi(i^1i^2) = b(i) && (p_i \not \in G_3, r_i > 0), \label{eqn:node_cap2>}\\  
 - \psi(i^1i^2) - \psi(i^2i^3) - \psi(i^1i^3) \leq b(i) &&
(p_i \in G_3, r_i = 0),\label{eqn:node_cap3=}\\
 - \psi(i^1i^2) - \psi(i^2i^3) - \psi(i^1i^3) = b(i) && 
 (p_i \in G_3, r_i > 0) \label{eqn:node_cap3>}
\end{eqnarray}
for each edge $e$ and nonterminal node $i$.
By precisely the same argument,  
a $(p,r)$-feasible support $\psi$ is decomposed 
as $\psi = \sum_{P \in {\cal P}} f^{\psi}(P) \chi_P$ 
for a multiflow $f^{\psi}:{\cal P} \to \RR_+$,
where the node-capacity constraint (\ref{eqn:node_cap}) follows from (\ref{eqn:node_cap2=})-(\ref{eqn:node_cap3>}).
Corresponding to Lemma~\ref{lem:optimality1}, 
we obtain the following, where the proof goes along the same argument. 
\begin{Lem}[\cite{HH15node_multi}]
	\begin{itemize}
	\item[{\rm (1)}]\ A potential $(p,r)$ is optimal 
	if and only if a $(p,r)$-feasible support exists.
	\item[{\rm (2)}]\ For any $(p,r)$-feasible support $\psi$, 
	the multiflow $f^{\psi}$ is optimal to PNMF.
	\end{itemize}
\end{Lem}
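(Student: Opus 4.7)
The plan is to follow the pattern of Lemma~\ref{lem:optimality1}, adapted to the twisted tree-grid and node capacities. I would combine LP duality with a decomposition of a $(p,r)$-feasible support into an $S$-path multiflow, so that existence of $\psi$ witnesses simultaneous optimality of $(p,r)$ and $f^\psi$.

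First I would prove weak duality between any feasible multiflow $f$ and any potential $(p,r)$. For every $S$-path $P=(s=i_0,i_1,\ldots,i_\ell=t)$ in $\mathcal{P}$, telescoping along $P$ and using $(p_s,r_s)=(v_s,0)$, $(p_t,r_t)=(v_t,0)$ together with the potential inequality $d(p_{i_{k-1}},p_{i_k})-r_{i_{k-1}}-r_{i_k}\le 2$ gives
\[
d(v_{s_P},v_{t_P})\;\le\; \sum_{k=1}^\ell d(p_{i_{k-1}},p_{i_k})\;\le\; 2|P|+2\sum_{i\in P\cap(V\setminus S)} r_i.
\]
Weighting by $f(P)\ge 0$, summing over $P\in\mathcal{P}$, and applying the node-capacity constraint \eqref{eqn:node_cap} yields the weak-duality inequality $\sum_P(d(v_{s_P},v_{t_P})-2|P|)f(P)\le 2\sum_i b(i)r_i$. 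Equality holds precisely when, along every used path $P$, each edge is tight (in $E_=$) and the concatenation traces a $G$-geodesic, and when $r_i>0$ forces the node capacity at $i$ to be saturated.

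Next I would prove part (2) together with the ``if'' direction of (1). Given a $(p,r)$-feasible support $\psi$, the sign conditions \eqref{eqn:psi=>0}--\eqref{eqn:psi<=0} and flow conservation \eqref{eqn:psi(delta)=0} at every clone $i^\alpha$ allow an alternating-path decomposition $\psi=\sum_{P\in\mathcal{P}} f^\psi(P)\chi^P$, where each $P$ alternately uses an $E_-$-edge (inside a clone-clique of some node) and an $E_=$-edge (between distinct original nodes), starting and ending at terminals. Contracting every $E_-$-edge recovers an $S$-path in $N$; the node-capacity constraint \eqref{eqn:node_cap} follows from \eqref{eqn:node_cap2=}--\eqref{eqn:node_cap3>}, since the total flow of $f^\psi$ through clones of $i$ equals $-\sum_{\alpha<\beta}\psi(i^\alpha i^\beta)\le b(i)$. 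Because $G^*$ is a tree and every used edge satisfies $d(p_i,p_j)=2+r_i+r_j$, the local geodesic produced at each hop concatenates to a global geodesic in $G$, so the chain of inequalities in weak duality is tight path-by-path; together with tightness of \eqref{eqn:node_cap2>}, \eqref{eqn:node_cap3>} at nodes with $r_i>0$, this yields $\sum_P (d(v_{s_P},v_{t_P})-2|P|)f^\psi(P)=2\sum_i b(i)r_i$. By weak duality, both $(p,r)$ and $f^\psi$ are optimal.

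For the ``only if'' direction of (1), I would invoke LP strong duality for PNMF together with Proposition~3.6 (identifying the dual optimum with $\min_{(p,r)} 2\sum_i b(i)r_i$): if $(p,r)$ is optimal, pick any optimal multiflow $f$; complementary slackness forces every support-edge of $f$ to lie in $E_=$ and every $f$-path to be a $G$-geodesic under $i\mapsto p_i$, and forces the node-capacity constraint at $i$ to be tight whenever $r_i>0$. Lifting $f$ edge-by-edge to an edge-weight on $E_=\cup E_-$ (splitting the flow at each split node $i$ across the clones $i^\alpha$ according to which neighbors it enters and leaves) produces a weight $\psi$ satisfying \eqref{eqn:psi=>0}--\eqref{eqn:node_cap3>}.

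The main obstacle will be the branch vertices $p_i\in G_3$, where node $i$ carries three clones and the node-capacity bound \eqref{eqn:node_cap3=}--\eqref{eqn:node_cap3>} involves the sum over three internal ``triangle'' edges rather than a single edge. For the decomposition direction, one must show that the three negative $\psi$-values on the triangle can always be paired with positive $\psi$-values on incident $E_=$-edges in a way that respects flow conservation \eqref{eqn:psi(delta)=0} at each clone simultaneously; for the converse, one must verify that the lift of an optimal multiflow to the three-clone structure is consistent even when a $G$-geodesic through $i$ uses two of the three branches. Both points reduce to an Eulerian-style parity argument on the auxiliary bidirected graph at each branch node, and this is the technical heart of the proof.
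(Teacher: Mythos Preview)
Your proposal is correct and follows essentially the same route the paper indicates: the paper gives no independent proof here but simply states that ``the proof goes along the same argument'' as Lemma~\ref{lem:optimality1}, and you have faithfully spelled out that argument---weak duality by telescoping the potential constraint $d(p_i,p_j)-r_i-r_j\le 2$ along each $S$-path, decomposition of a $(p,r)$-feasible support into an alternating-path multiflow satisfying all complementary slackness conditions, and the converse via lifting an optimal multiflow. The branch-vertex issue you flag is real but is handled by the same alternating-path (bidirected-flow) decomposition the paper invokes before Lemma~\ref{lem:optimality1}; it is a routine check rather than a genuine obstacle, since conservation \eqref{eqn:psi(delta)=0} at each clone $i^\alpha$ guarantees that positive $E_=$-incidences and negative $E_-$-incidences balance at every clone, and for the converse direction the geodesic property of an optimal path through $p_i\in G_3$ forces it to enter and leave through distinct neighbors $(p_i)^{*\beta}\neq(p_i)^{*\gamma}$, so the lift uses exactly one triangle edge $i^\beta i^\gamma$.
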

The system of inequalities (\ref{eqn:psi=>0})-(\ref{eqn:node_cap3>}) is similar to the previous bidirected flow problem (\ref{eqn:p-feasible_cap1})-(\ref{eqn:p-feasible_conser}). 
However (\ref{eqn:node_cap3=}) and (\ref{eqn:node_cap3>}) 
are not bidirected flow constraints.
In fact, 
the second constraint (\ref{eqn:node_cap3>}) reduces to 
a bidirected flow constraint as follows.
Add new vertex $i^0$, replace edges $i^1i^2, i^2i^3,i^1i^3$ 
by $i^0i^1, i^0i^2,i^0i^3$, and replace (\ref{eqn:node_cap3>}) by
\begin{equation}\label{eqn:node_cap3>'}
- \psi(\delta (i^0)) = 2 b(i),\quad  - \psi(i^0i^\alpha) \leq b(i) \ (\alpha =1,2,3). 
\end{equation} 
Then 
$(\psi(i^1 i^2), \psi(i^2 i^3),\psi(i^1 i^3))$ 
 satisfying (\ref{eqn:node_cap3>}) is represented
 as 
 \[
 \psi(i^{\alpha}i^{\beta}) = (\psi(i^0i^\alpha) + \psi(i^{0}i^\beta) - \psi(i^{0}i^\gamma))/2\quad 
 (\{ \alpha,\beta,\gamma\} = \{1,2,3\})
 \]
 for 
 $(\psi(i^0i^1),\psi(i^0i^2),\psi(i^0i^3))$ satisfying (\ref{eqn:node_cap3>'}).

We do not know whether (\ref{eqn:node_cap3=}) 
admits such a reduction. 
This makes the problem difficult.
Node $i \in V \setminus S$ with $p_i \in G_3, r_i= 0$ 
is said to be {\em special}.
For a special node $i$, 
remove edges $i^1i^2,i^2i^3,i^1i^3$.
Then the conditions  (\ref{eqn:psi(delta)=0})  and (\ref{eqn:node_cap3=}) 
for $u = i^1,i^2,i^3$ can be equivalently written 
as the following condition on degree vector 
$(\psi(\delta(i^1)),\psi(\delta(i^2)), \psi(\delta(i^3)))$:
\begin{equation}\label{eqn:bisub'}
\psi(\delta(X)) - \psi(\delta(Y)) \leq g_i (X,Y) \quad 
(X,Y \subseteq \{i^1,i^2,i^3\}: X \cap Y = \emptyset),
\end{equation}
where $g_i$ is a function 
the set of pairs $X,Y \subseteq \{i^1,i^2,i^3\}$ 
with $X \cap Y \neq \emptyset$.
Such $g_i$ can be chosen as a bisubmodular (set) function, 
and inequality system (\ref{eqn:bisub'}) 
says that the degree vector must 
belong to the {\em bisubmodular polyhedron} associated with $g_i$; 
see \cite{FujiBook} for bisubmodular polyhedra.
Thus our problem of solving  (\ref{eqn:psi=>0})-(\ref{eqn:node_cap2>}), (\ref{eqn:node_cap3>'}), and (\ref{eqn:bisub'}) is a fractional bidirected flow problem 
with degrees constrained by a bisubmodular function, 
which may be called a {\em bisubmodular flow} problem. 
However this natural class of the problems has not been well-studied so far.
We need a further reduction. 
As in the previous subsection, 
for the bidirected network associated with (\ref{eqn:psi=>0})-(\ref{eqn:node_cap2>}), and (\ref{eqn:node_cap3>'}),  
we construct the equivalent directed network $D_{p,r}$ 
with upper capacity $\overline{c}$ and lower capacity $\underline{c}$, where node $i^{\alpha}$ and edge $e$ are doubled as 
$i^{\alpha +}$, $i^{\alpha-}$ and  $e^{+}$, $e^{-}$, respectively. 
Let $V_i$ denote the set of nodes replacing original $i \in V$, 
as before.
We construct a submodular-flow constraint on 
$V_i = \{i^{1+},i^{2+},i^{3+},i^{1-},i^{2-},i^{3-}\}$ 
for each special node $i$  
so that a $(p,r)$-feasible support $\psi$ 
is recovered from any feasible flow $\varphi$ on $D_{p,r}$ 
by the following relation:
\begin{equation}\label{eqn:recover}
\psi(e) = (\varphi(e^+) + \varphi(e^-))/2.
\end{equation}
Such a submodular-flow constraint actually exists, and is represented by some submodular set function $h_i$ on $V_i$; see \cite{HH15node_multi} for the detailed construction of $h_i$.

Now our problem is to find a flow $\varphi$ on network $D_{p,r}$ 
having the following properties:
\begin{itemize}
		\item  For each edge $e$ in $D_{p,r}$, it holds that $\underline{c}(e) \leq \varphi(e) \leq \overline{c}(e)$. 
		\item  For each special node $i$, it holds that
			\begin{equation}\label{eqn:submo'}
			\varphi(\delta^-(U)) - \varphi(\delta^+(U) )\leq h_i (U) \quad (U \subseteq V_i).
			\end{equation}
		\item For other node $i \in V \setminus S$, it holds that
		\begin{equation*}
		\varphi (\delta^- (i^{\alpha \sigma})) - \varphi (\delta^+(i^{\alpha \sigma})) = 0 \quad (\alpha \in \{0,1,2,3\},\sigma \in \{+,-\}).
		\end{equation*}
\end{itemize} 
This is a submodular flow problem, 
where the defining submodular set function is given by 
$X \mapsto \sum_{i: {\rm special}} h_i (X \cap V_i)$, 
and its exchange capacity is computed in constant time.
Consequently we have the following, where 
the half-integrality follows from 
the integrality theorem of submodular flow.
\begin{Lem}[\cite{HH15node_multi}]
	From an optimal potential $(p,r)$, 
	a half-integral optimal multiflow is obtained in $O({\rm MSF}(n,m,1))$ time.
\end{Lem}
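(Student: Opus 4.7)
The plan is to follow the chain of reductions the excerpt has already built: optimal potential $\Longrightarrow$ feasible flow on $D_{p,r}$ $\Longrightarrow$ $(p,r)$-feasible support $\psi$ $\Longrightarrow$ multiflow $f^\psi$. What I need to check is that (a) the submodular flow problem on $D_{p,r}$ is feasible precisely when $(p,r)$ is optimal, (b) an integral feasible flow can be found in $O(\mathrm{MSF}(n,m,1))$ time, and (c) half-integrality of the resulting multiflow follows.

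First, I would apply a combinatorial submodular flow algorithm to $D_{p,r}$ with the global submodular set function $H(X) := \sum_{i:\,\mathrm{special}} h_i(X \cap V_i)$ (plus the usual conservation law $\varphi(\delta^-(u))=\varphi(\delta^+(u))$ at non-special nonterminal split nodes, which is also a submodular/supermodular constraint with $h \equiv 0$). Since $(p,r)$ is assumed optimal, the preceding lemma supplies a $(p,r)$-feasible support $\psi$; by reversing the correspondence (\ref{eqn:recover}), i.e.\ setting $\varphi(e^\pm) := \psi(e)$ on non-special parts and using the explicit representation of bisubmodular-polyhedron vectors via degree vectors on $V_i$, we obtain a feasible flow on $D_{p,r}$. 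Hence the submodular flow instance is feasible. Conversely, any feasible $\varphi$ produces $\psi$ by (\ref{eqn:recover}), which is then $(p,r)$-feasible by construction of $h_i$, so optimality of $(p,r)$ is exactly the existence of a feasible flow.

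Next, invoke the integrality theorem for submodular flow: because all capacities $\underline{c},\overline{c}$ and all set functions $h_i$ are integer-valued, there exists an integral feasible flow $\varphi$, and any standard combinatorial MSF algorithm outputs such a $\varphi$. Now each $V_i$ for a special node has constant size (six nodes), so the exchange capacity for $H$ reduces to evaluating $h_i$ on a constant-size ground set, which costs $O(1)$ per query — this is where the parameter $\gamma = 1$ in $\mathrm{MSF}(n,m,1)$ enters. Setting $\psi(e) := (\varphi(e^+)+\varphi(e^-))/2$ yields a half-integral $(p,r)$-feasible support; the alternating-path decomposition sketched earlier in the section converts $\psi$ into a multiflow $f^\psi$ with half-integral flow values, and by part (2) of the previous lemma $f^\psi$ is optimal to PNMF (hence to NMF).

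The main obstacle I anticipate is not the algorithmic step but verifying that the submodular functions $h_i$ on $V_i$ are correctly engineered so that (\ref{eqn:recover}) sets up a bijection between feasible flows on $D_{p,r}$ and $(p,r)$-feasible supports — in particular that the non-bidirected constraint (\ref{eqn:node_cap3=}) at special nodes is faithfully encoded by $h_i$. That verification is outsourced to \cite{HH15node_multi}, so given it, the remaining argument is just assembling the pieces above and reading off the running time from the MSF oracle.
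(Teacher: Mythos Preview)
Your proposal is correct and follows essentially the same route as the paper: the text immediately preceding the lemma already sets up the submodular flow problem on $D_{p,r}$ with defining function $X \mapsto \sum_{i:\text{special}} h_i(X \cap V_i)$ and notes that exchange capacities are computed in constant time, and the lemma is then stated with the remark that ``the half-integrality follows from the integrality theorem of submodular flow.'' Your write-up just spells out the steps (feasibility from optimality via the previous lemma, integral feasible flow via MSF with $\gamma=1$, recovery of half-integral $\psi$ through~(\ref{eqn:recover}), and optimality of $f^\psi$) that the paper leaves implicit.
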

By Frank's theorem on the feasibility of submodular flow (see \cite{FrankBook,FujiBook}),  
a feasible flow $\varphi$ exists if and only if
\begin{equation}
\kappa(X) := \underline{c}(\delta^-(X)) - \overline{c}(\delta^+(X)) - 
\sum_{i: {\rm special}} h_i (X \cap V_i).
\end{equation}
is nonpositive for every vertex subset $X$.
A {\em violating cut} is a vertex subset $X$ having positive $\kappa(X)$.
By a standard reduction technique, 
finding a feasible flow or violating cut is reduced 
to a maximum submodular flow problem, 
where a minimal violating cut is naturally obtained from 
the residual graph of a maximum feasible flow. 

Again,
we can obtain a steepest direction from a minimal violating cut,  
according to its intersection pattern with each $V_i$.
A vertex subset $X$ is called {\em movable} 
if for $i$ with $p_i \in G^* \setminus G$ 
it holds that $|X \cap V_i| \leq 1$ and 
for node $i$ with $p_i \in G$ it holds that
$X \cap V_i = \emptyset$, $V_i^+$, $V_i^-$, $\{i^{\alpha+}\}$, 
$V_i^{-} \setminus \{i^{\alpha-}\}$, or 
$\{i^{\alpha+}\} \cup V_i^{-} \setminus \{i^{\alpha-}\}$
for some $\alpha \in \{1,2,3\}$.
For a movable cut $X$, define $(p,r)^X$ by
\begin{equation}
(p,r)^X_i 
:= \left\{
\begin{array}{ll}
(p_i,r_i) & {\rm if}\ X \cap V_i = \emptyset,\\
(p_i^{*\alpha},r_i+1/2) & {\rm if}\ X \cap V_i = \{i^{\alpha+}\},\\
(p_i^{*\alpha},r_i-1/2) & {\rm if}\ X \cap V_i =V_i^{-} \setminus \{i^{\alpha-}\},\\
(p_i^{\alpha},r_i) & {\rm if}\ X \cap V_i = \{i^{\alpha+}\} \cup V_i^{-} \setminus \{i^{\alpha-}\},\\
(p_i,r_i+1) & {\rm if}\ X \cap V_i = V_i^+,\\
(p_i,r_i-1) & {\rm if}\ X \cap V_i = V_i^-.\\
\end{array}
\right.
\end{equation}
See Figure~\ref{fig:cut2} for an intuition of $(p,r)^X$.
\begin{figure}[t]
	\begin{center}
		\includegraphics[scale=1.0]{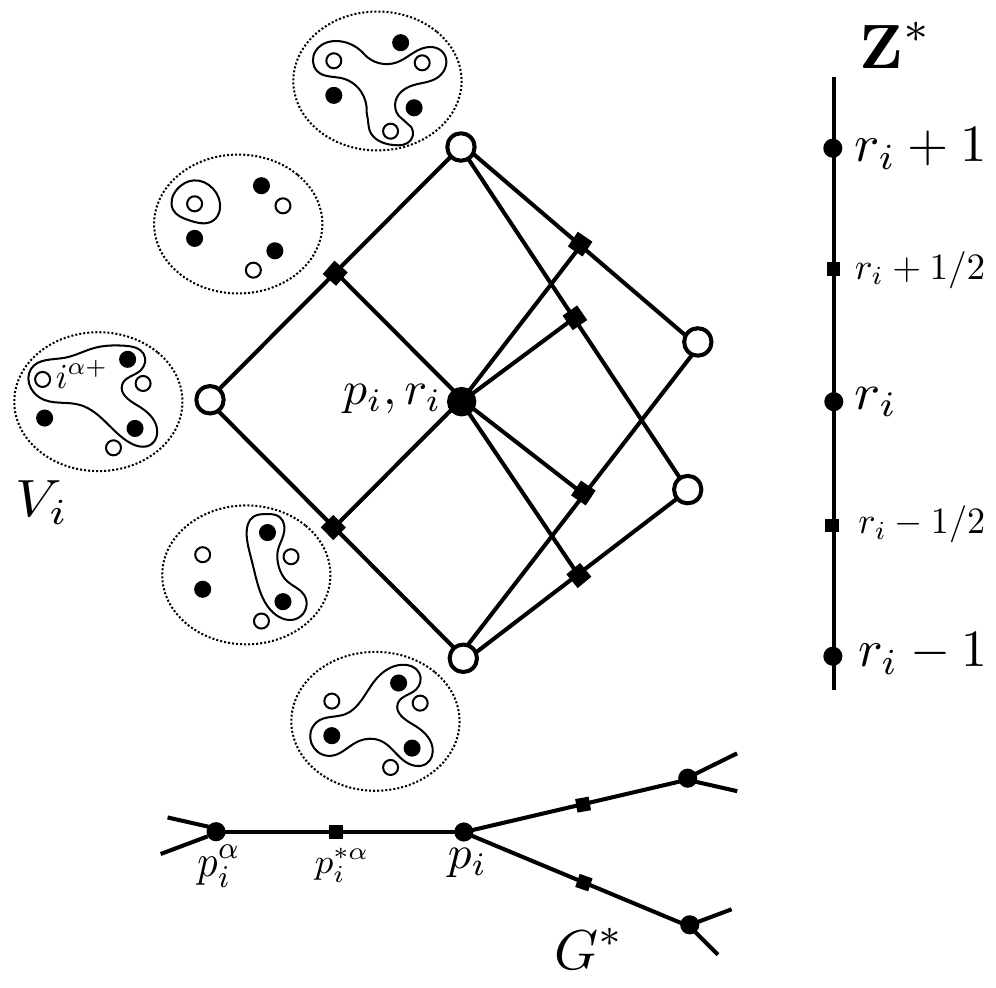}
		\caption{The correspondence between movable cuts in $D_{p,r}$ and neighbors of $(p,r)$. 
			For $(p_i,r_i) \in B$,
			there is a one-to-one correspondence between 
			$F_{p_i}$ and $\{ X \cap V_i \mid X: \mbox{movable cut}\}$,  
		where the meaning of this figure is the same as in Figure~\ref{fig:cut1}. 
		}
		\label{fig:cut2}
	\end{center}
\end{figure}\noindent
Let $V_F$ be the union of $V_i$ over 
$i \in V$ with $(p_i,r_i) \in B$ and 
$\{ i^{\alpha+}, i^{\alpha-} \}$ 
over $i \in V$ and $\alpha \in \{1,2\}$ with 
$p_i \in G^* \setminus G$ and $(p_i^{*\alpha}, r_i-1/2) \in B$.
Let $V_I$ be defined by replacing the role of $B$ and $W$ in $V_F$. Edges between $V_I$ and $V_F$ have the same (finite) lower and upper capacity. 
If $X$ is violating, then $X \cap V_I$ or $X \cap V_F$ is violating.
\begin{Lem}[\cite{HH15node_multi}]
Let $X$ be the unique minimal maximum violating cut, and let $\tilde X$ be obtained from $X$ by
adding $V_i^+$ for each node $i \in V \setminus S$ 
with $p_i \in G_3$ and $|X \cap V_i^+|=2$.
Then $\tilde X \cap V_I$ and $\tilde X \cap V_F$ are movable, one of them is violating, and
$(p,r)^{\tilde X \cap V_I}$ is a minimizer of $\varpi$ over $I_{p,r}$ 
and $(p,r)^{\tilde X \cap V_F}$ is a minimizer of $\varpi$ over $F_{p,r}$. 
\end{Lem}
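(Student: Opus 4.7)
The plan is to parallel the proof of Lemma~\ref{lem:steepest1}, replacing Hoffman's circulation theorem by Frank's feasibility theorem for submodular flows, and replacing the direct local correspondence between cuts and neighbors (Figure~\ref{fig:cut1}) by its twisted analog (Figure~\ref{fig:cut2}). The new combinatorial ingredient is the modification $X \mapsto \tilde X$, which is forced on us at special nodes, where the submodular-flow constraint is genuinely non-bidirected.

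First I would establish the additive decomposition $\kappa(\tilde X) = \kappa(\tilde X \cap V_I) + \kappa(\tilde X \cap V_F)$. Each $V_i$ lies entirely in $V_I$ or in $V_F$ (according to the color class of $(p_i,r_i)$; for $p_i \in G^* \setminus G$, the relevant single element on each side is already singled out in the definitions of $V_I$ and $V_F$), so each submodular summand $h_i$ sits on one side of the partition. The only thing to verify is the contribution of edges joining $V_I$ and $V_F$: these come from edges $ij \in E_{=}$ with $(p_i,r_i)$ and $(p_j,r_j)$ in opposite color classes, and their doubled copies $e^+, e^-$ carry identical finite lower and upper capacities, so the boundary contributions decouple across the $V_I/V_F$ partition exactly as in the integer-cost argument of Lemma~\ref{lem:steepest1}.

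Next I would verify (a) that $\tilde X$ is still a maximum violating cut and (b) that $\tilde X \cap V_I$ and $\tilde X \cap V_F$ are movable. For (a), at each special $i$ with $|X \cap V_i^+| = 2$ the explicit choice of $h_i$ in~\cite{HH15node_multi} is engineered so that enlarging $X \cap V_i^+$ to the full $V_i^+$ does not decrease $\kappa$; combined with the minimality of $X$ this forces equality. For (b), after the enlargement every $\tilde X \cap V_i$ falls into one of the six admissible movable patterns: at nodes with $p_i \in G^* \setminus G$, identification of the unique minimal maximum violating cut with the sink side of the residual graph of a maximum feasible flow forces $|\tilde X \cap V_i| \le 1$, and at nodes with $p_i \in G$ the pattern can be read off directly using submodularity of $h_i$. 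Since each $V_i$ lies entirely on one side of the $V_I/V_F$ partition, the restrictions to $V_I$ and to $V_F$ inherit admissible patterns. Combined with Step~1 and $\kappa(\tilde X) > 0$, at least one of $\kappa(\tilde X \cap V_I)$, $\kappa(\tilde X \cap V_F)$ is positive, proving the violation claim.

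For the optimality claim, I would invoke the bijective correspondence between movable cuts $Y \subseteq V_I$ (respectively, $Y \subseteq V_F$) and elements of $I_{p,r}$ (respectively, $F_{p,r}$) illustrated in Figure~\ref{fig:cut2}: every neighbor $(p,r)^Y$ of $(p,r)$ in $I_{p,r} \cup F_{p,r}$ arises from exactly one such $Y$, and a direct computation using the construction of $D_{p,r}$ together with~(\ref{eqn:recover}) yields the LP-duality identity $\varpi(p,r) - \varpi((p,r)^Y) = c\,\kappa(Y)$ for a positive constant $c$. Minimizing $\varpi$ over $I_{p,r}$ (resp.\ $F_{p,r}$) is therefore the same as maximizing $\kappa$ over movable cuts contained in $V_I$ (resp.\ $V_F$), and by Steps~1--2 this maximum is attained at $\tilde X \cap V_I$ (resp.\ $\tilde X \cap V_F$). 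The main obstacle is ensuring that the submodular function $h_i$ selected in~\cite{HH15node_multi} simultaneously satisfies (i) the feasible-flow recovery~(\ref{eqn:recover}) of a $(p,r)$-feasible support and (ii) a cut-pattern structure that precisely matches the six movable patterns modulo the $X \mapsto \tilde X$ correction; verifying this alignment case by case at special nodes is the technical heart of the argument.
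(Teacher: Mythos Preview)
The paper does not give a proof of this lemma; it is stated with a citation to \cite{HH15node_multi} and no argument is supplied here, just as the analogous Lemma~\ref{lem:steepest1} is stated without proof. So there is nothing in the present paper to compare your proposal against at the level of proof details.

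That said, your outline follows exactly the strategy the paper implicitly signals by its parallel presentation of the two algorithms: replace Hoffman's theorem by Frank's, replace the bidirected cut--neighbor correspondence of Figure~\ref{fig:cut1} by that of Figure~\ref{fig:cut2}, and handle the genuinely non-bidirected contribution at special nodes via the $X \mapsto \tilde X$ correction. Your decomposition $\kappa(\tilde X) = \kappa(\tilde X \cap V_I) + \kappa(\tilde X \cap V_F)$ and the observation that each $h_i$ lives on one side (since special nodes have $p_i \in G_3 \subseteq G$, so $V_i$ is not split) are the right structural points. One small inaccuracy: for $i$ with $p_i \in G^* \setminus G$, the set $V_i$ is genuinely split between $V_I$ and $V_F$ (see the definition of $V_F$ in the paper), so your phrasing ``each $V_i$ lies entirely in $V_I$ or in $V_F$'' is too strong in general; what matters is that no $h_i$ straddles the partition and that edges internal to such $V_i$ have equal upper and lower capacity.

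The step you flag as the main obstacle---the exact alignment between the choice of $h_i$, the recovery formula~(\ref{eqn:recover}), and the six movable patterns, together with the claimed identity $\varpi(p,r) - \varpi((p,r)^Y) = c\,\kappa(Y)$---is indeed where all the work lies, and the paper explicitly punts on it (``see \cite{HH15node_multi} for the detailed construction of $h_i$''). Your proposal is a correct high-level plan, but a complete proof requires the explicit $h_i$ from the cited paper and a case analysis that this survey deliberately omits.
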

Now the algorithm to find an optimal multiflow is given as follows.
\begin{description}
	\item[{\bf Steepest Descent Algorithm for PNMF}]
	\item[Step 0:] For each terminal $s \in S$, 
	let $(p_s,r_s) := (v_s,0)$. 
	Choose any vertex $v$ in $\Sigma$. 
	For each $i \in V \setminus S$, 
	let $p_i := v$ and $r_i := 2(m+1) \lceil \log k \rceil$. 
	\item[Step 1:] Construct $D_{p,r}$ 
	with submodular set function $X \mapsto \sum_{i:{\rm special}} h_i (X \cap V_i)$.
	\item[Step 2:] If a feasible flow $\varphi$ exists in $D_{p,r}$, 
	then obtain a $(p,r)$-feasible support $\psi$ from $\varphi$
	and an optimal multiflow $f^{\psi}$ via the path decomposition; stop.
	\item[Step 3:] For a minimal maximum violating cut $X$,   
	choose $(p',r') \in \{ (p,r)^{\tilde X \cap V_I}, (p,r)^{\tilde X \cap V_F} \}$ with 
	$\varpi(p',r') = \min \{ \varpi((p,r)^{\tilde X \cap V_I}), \varpi((p,r)^{\tilde X \cap V_F})\}$, 
	let $(p,r) := (p',r')$, and go to step~1.
\end{description}
One can see that the initial point is actually a potential.
By the argument similar to the proof of Proposition~\ref{prop:runin}, one can show that 
there is an optimal potential $(p^*,r^*)$ such that $r^*_i= O(m \log k)$ and 
$d(v,p_i^*) = O(m \log k)$.
Consequently, the number of iterations is bounded by $O(m \log k)$, 
and we obtain Theorem~\ref{thm:algo2}.

\begin{Rem}{\rm 
		As seen above, 
	${\pmb k}$- and $({\pmb k}, {\pmb l})$-submodular functions arising 
	from localizations of $\omega$ and $\varpi$ 
	can be minimized via maximum (submodular) flow.
	A common feature of both cases is that 
	the domain $S_{\pmb k}$ or $S_{{\pmb k}, {\pmb l}}$ 
	of a ${\pmb k}$- or $({\pmb k}, {\pmb l})$-submodular function
	is associated with 
	special intersection patterns between nodes and cuts on which 
	the function-value is equal to the cut-capacity (up to constant).
	A general framework 
	for such network representations  
	is discussed by Iwamasa~\cite{Iwamasa}.  
	}
\end{Rem}

\section{L-convex function on oriented modular graph}\label{sec:oriented_modular}

In this section, we explain L-convex functions 
on oriented modular graphs, introduced in~\cite{HH150ext,HH16L-convex}.
This class of discrete convex functions is 
a further generalization of L-convex functions 
in Section~\ref{sec:gridstructures}.
The original motivation of our theory comes from the complexity  classification of the minimum 0-extension problem.
We start by mentioning the motivation and highlight 
of our theory (Section~\ref{subsec:motivation}), 
and then go into the details (Sections~\ref{subsec:submo_modular} and \ref{subsec:L-convex}).

\subsection{Motivation: Minimum 0-extension problem}\label{subsec:motivation} 
Let us introduce
the {\em minimum 0-extension problem (0-EXT)}, 
where our formulation is different from but equivalent to the original formulation 
by Karzanov~\cite{Kar98a}.
An input $I$ consists of 
number $n$ of variables, undirected graph $G$, 
nonnegative weights $b_{iv}$ $(1 \leq i \leq n, v \in G)$ and $c_{ij}$ $(1 \leq i < j \leq n)$. 
The goal of 0-EXT is to find $x = (x_1,x_2,\ldots, x_n) \in G^n$ 
that minimizes 
\begin{equation}
\sum_{i=1}^n \sum_{v \in G} b_{iv} d(x_i, v) + \sum_{1 \leq i < j \leq n} c_{ij} d(x_i, x_j), 
\end{equation}
where $d = d_G$ is the shortest path metric on $G$.
This problem is interpreted as a facility location on graph $G$.
Namely
we are going to locate 
new facilities $1,2,\ldots,n$ on graph $G$ of cities,
where these facilities communicate each other and 
communicate with all cities, and communication costs 
are propositional to their distances.
The problem is to find 
a location of minimum communication cost.
In facility location theory~\cite{TFL83}, 0-EXT is known as 
the {\em multifacility location problem}. 
Also 0-EXT is an important special case of 
the {\em metric labeling problem}~\cite{KleinbergTardos02}, 
which is
a unified label assignment problem 
arising from computer vision and machine learning.
Notice that
fundamental combinatorial optimization problems can be formulated as 0-EXT for special underlying graphs.
The minimum cut problem is the case of $G = K_2$, and the multiway cut problem is the case of $G = K_m$ $(k \geq 3)$.

In \cite{Kar98a}, Karzanov addressed 
the computational complexity of 
0-EXT with fixed underlying graph $G$.
This restricted problem class is denoted by 0-EXT$[G]$.
He raised a question: 
{\em What are graphs $G$ for which {\rm 0-EXT}$[G]$ is polynomially solvable?} 
An easy observation is that
0-EXT$[K_m]$ is in P if $m \leq 2$ and NP-hard otherwise.
A classical result~\cite{Kolen} in facility location theory is  
that 0-EXT$[G]$ is in P for a tree $G$.
Consequently, 0-EXT$[G]$ is in P for a tree-product $G$.
It turned out that the tractability of 0-EXT is strongly linked 
to median and modularity concept of graphs.
A {\em median} of three vertices $x_1,x_2,x_3$ 
is a vertex $y$ satisfying 
\begin{equation*}
d(x_{i},x_{j}) = d(x_i,y) + d(y,x_j) \quad (1 \leq i < j \leq 3).
\end{equation*}
A median is a common point 
in shortest paths among the three points, 
may or may not exist, and is not necessarily unique even if it exists.
A {\em median graph} is a connected graph 
such that every triple of vertices has a {\em unique} median.
Observe that trees and their products are median graphs. 
Chepoi~\cite{Chepoi96} and Karzanov~\cite{Kar98a} independently showed that 0-EXT$[G]$ is in P for a median graph $G$.

A {\em modular graph} is a further generalization of a median graph, 
and is defined as a connected graph such that 
every triple of vertices admits (not necessarily unique) a median.
The following hardness result shows that  
graphs tractable for 0-EXT are necessarily modular. 
\begin{Thm}[{\cite{Kar98a}}]\label{thm:hardness}
	If $G$ is not orientable modular, 
	then {\rm 0-EXT}$[G]$ is NP-hard.
\end{Thm}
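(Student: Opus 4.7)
The plan is to prove NP-hardness by polynomial reduction from a known hard case of 0-EXT, most naturally 0-EXT$[K_3]$ (the three-terminal multiway cut problem), whose NP-hardness is classical. The task thus reduces to showing that any graph $G$ failing to be orientable modular can simulate 0-EXT$[K_3]$ (or another NP-hard restriction) inside 0-EXT$[G]$. The workhorse technique throughout is \emph{pinning}: assigning a very large weight $b_{iv}$ to a single vertex $v$ forces $x_i = v$ in any optimum, so that the ``terminals'' of the source instance can be hard-coded at chosen vertices of $G$; combined with carefully chosen pairwise weights $c_{ij}$, this lets the remaining free variables encode the combinatorial content of the source instance through the metric $d_G$.

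I would split into two cases. In Case~1, $G$ is not modular, so there exists a triple $u_1,u_2,u_3$ admitting no median. Pinning three groups of terminal variables at $u_1,u_2,u_3$ and letting each free variable $x_i$ interact with all three groups, I would exploit the fact that for every $x \in G$ the three distances $d(x,u_\ell)$ fail the median equality: there is a strict ``bonus'' for choosing $x$ close to one particular $u_\ell$ rather than compromising. Tuning the $c_{ij}$ between free variables then forces them to partition into three classes, which encodes three-terminal multiway cut. In Case~2, $G$ is modular but not orientably modular; here one needs a local certificate of non-orientability (roughly, an inconsistent labeling around some cycle of $4$-cycles of $G$). Anchored at the vertices of such a local obstruction, the same pinning-plus-linking template supports a reduction from 0-EXT on a small graph on which hardness is already known, isometrically realized at the obstruction.

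The main obstacle will be Case~2. Whereas non-modularity in Case~1 yields an immediate local witness (the median-less triple), failure of orientability is a priori a global property of a modular graph. The crux is therefore a combinatorial characterization of non-orientably-modular graphs by a finite list of ``minimal forbidden configurations,'' each of which individually supports an NP-hardness reduction via the pinning technique---analogous in spirit to how non-bipartiteness is witnessed by odd cycles. Producing this list, and verifying that at least one such configuration can be located in polynomial time in $|G|$ so that the overall reduction remains polynomial, is where the bulk of the technical work lies.
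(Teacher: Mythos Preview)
The paper does not prove this theorem: it is stated with attribution to Karzanov~[Kar98a] and no argument is given here. So there is no ``paper's own proof'' to compare against; you are effectively proposing to reconstruct Karzanov's original argument.

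Your overall plan---reduce from multiway cut, use heavy $b_{iv}$ weights to pin variables at chosen vertices of $G$, and split into the non-modular case versus the modular-but-non-orientable case---is the right architecture and is essentially how the result is proved in~[Kar98a]. You also correctly identify Case~2 as the hard part.

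That said, your Case~1 sketch has a real gap. The bare fact that $u_1,u_2,u_3$ have no median tells you only that no single vertex simultaneously realizes all three geodesic equalities; it does \emph{not} by itself imply that the function $x \mapsto \sum_\ell d(x,u_\ell)$ has exactly three well-separated minima near the $u_\ell$, nor that a free variable linked to three pinned groups is forced into one of three discrete ``colors.'' For a generic median-less triple the minimizers of this sum can be a large, connected set, and the ``strict bonus'' you invoke need not exist in any usable form. Karzanov's argument does not work directly from an arbitrary median-less triple; rather, one extracts from non-modularity (and, in Case~2, from non-orientability) a specific isometric subconfiguration of $G$---for instance an isometric cycle of forbidden length, or a $K_{2,3}$-type obstruction---whose metric, restricted to a few designated vertices, is affinely equivalent to the $K_3$ metric (or to another metric already known to be hard). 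Only with such a concrete configuration in hand does the pinning-and-linking template yield a valid reduction. Your proposal gestures at this in Case~2 (``finite list of minimal forbidden configurations'') but you will need the same kind of structural lemma already in Case~1; the two cases are not as asymmetric as your write-up suggests.
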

Here a (modular) graph is said to be {\em orientable}
if it has an edge-orientation, called an {\em admissible orientation}, such that
every 4-cycle $(x_1,x_2,x_3,x_4)$ is oriented
as: $x_1 \to x_2$ if and only if $x_4 \to x_3$.
Karzanov~\cite{Kar98a,Kar04a} showed that 0-EXT$[G]$ is polynomially solvable on
special classes of orientable modular graphs.
In \cite{HH150ext}, we proved the tractability 
for general orientable modular graphs. 
\begin{Thm}[\cite{HH150ext}]\label{thm:0-EXT}
	If $G$ is orientable modular,
	then {\rm 0-EXT}$[G]$ is solvable in polynomial time.
\end{Thm}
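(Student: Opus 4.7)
The plan is to cast 0-EXT$[G]$ as the minimization of an L-convex function on $G^n$, then run the steepest descent algorithm (SDA) of Section~\ref{subsec:SDA}, generalized to the oriented-modular setting of Section~\ref{sec:oriented_modular}. Since $G$ is orientable modular, fix an admissible orientation. This orientation induces a partial order $\preceq$ on $G$, and modularity ensures that for any two vertices $u,v$ the set of medians of $u,v$ with any fixed third vertex has enough structure to define discrete midpoint operators $u \bullet v, u \circ v$ (taking the $\preceq$-min and $\preceq$-max of a canonical near-midpoint pair on a $u$-$v$ geodesic, as in the tree-grid and twisted tree-grid constructions). Extend $\bullet, \circ$ component-wise to $G^n$. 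The first thing I would verify is that under this definition, the principal ideal $I_x$ and principal filter $F_x$ of every $x \in G^n$ carry polar-space structure, so that the local notion of submodularity from Section~\ref{subsec:building} applies.

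Second, I would show that the 0-EXT objective
\[
F(x) \;=\; \sum_{i=1}^n \sum_{v \in G} b_{iv}\, d(x_i,v) \;+\; \sum_{1 \le i < j \le n} c_{ij}\, d(x_i,x_j)
\]
is L-convex on $G^n$. Because the coefficients $b_{iv}, c_{ij}$ are nonnegative and the class of L-convex functions is closed under nonnegative combinations and coordinate lifts, this reduces to two atomic claims: (a) for every fixed $v \in G$ the function $u \mapsto d_G(u,v)$ is L-convex on $G$; (b) the two-variable function $(u,u') \mapsto d_G(u,u')$ is L-convex on $G \times G$. Both are midpoint-type inequalities $d(\cdot,\cdot) + d(\cdot,\cdot) \ge d(\bullet) + d(\circ)$ that I would establish directly from the quadrangle/median identities that characterize modular graphs together with the 4-cycle rule of an admissible orientation (this is the same principle that produces L-convexity on tree-grids and on twisted tree-grids in Sections~\ref{subsec:tree-grids} and \ref{subsec:skew}).

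Third, I would run SDA on $F$ from any starting point $x^{(0)} \in G^n$. Correctness follows from the local-to-global optimality criterion of Section~\ref{subsec:building}. For the iteration count, I would invoke the generalization of Theorem~\ref{thm:l_inf_bound} to oriented modular graphs to bound the number of iterations by $d_\Delta(x^{(0)}, \opt(F)) + 2$; since $\opt(F) \subseteq G^n$ and $G$ is a fixed finite graph, this is polynomial in the input size of 0-EXT$[G]$ (one can pre-bound the diameter of $G$ and hence of any reasonable starting neighborhood).

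The main obstacle, and the place where genuine work is needed, is the inner loop: minimizing $F$ over $I_x$ and over $F_x$ in polynomial time. By Lemmas~\ref{lem:local_1}, \ref{lem:local_2} and their polar-space generalization, these restrictions are submodular on polar spaces, i.e.\ generalized $\pmb k$- / $(\pmb k,\pmb l)$-submodular functions built from the distance functions $d(\cdot,v)$ and $d(\cdot,\cdot)$. No oracle-model polynomial algorithm is known for general submodularity on polar spaces, so I would not try to minimize in the oracle model. Instead I would exploit the explicit combinatorial form of the localized distance terms: each term $d(x_i,v)$ or $d(x_i,x_j)$ on $I_x$ or $F_x$ decomposes as a cut-capacity expression on an auxiliary bidirected/submodular-flow network (in the same spirit as the constructions $D_p$ and $D_{p,r}$ in Sections~\ref{sub:backup} and \ref{sub:multiway}, and the Iwamasa framework mentioned in the remark). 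Assembling these local networks for all pairs $(i,j)$ and all terminals $(i,v)$ produces a single submodular-flow instance whose min-cut yields the steepest direction in polynomial time. Combining this polynomial inner subroutine with the polynomial iteration bound yields the theorem.
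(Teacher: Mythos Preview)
Your high-level architecture matches the paper: show the 0-EXT objective is L-convex on $G^n$, run SDA, bound iterations by a diameter, and solve the local problem in polynomial time. But two of the load-bearing technical steps in your plan do not go through in the way you describe, and the paper handles them quite differently.

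First, for a general oriented modular graph there is no known construction of discrete midpoint operators $\bullet,\circ$; the paper says this explicitly in Section~\ref{subsec:L-convex} (``since we do not know how to define discrete midpoint operations on $\Gamma$''). Consequently, L-convexity here is \emph{not} defined by a midpoint inequality. It is defined by local submodularity on each principal ideal and filter together with chain-connectedness of the domain, i.e.\ via property~(3) of Theorem~\ref{thm:convex}. Relatedly, $I_x$ and $F_x$ are not polar spaces in general; by Proposition~\ref{prop:modular}(2) they are \emph{modular semilattices}, and the relevant submodularity is the fractional-join submodularity of Section~\ref{subsec:submo_modular}, not the polar-space version. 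So your step ``verify that $I_x,F_x$ carry polar-space structure'' would fail, and the atomic midpoint inequalities you propose for $d$ are not the right vehicle; the paper instead invokes Theorem~\ref{thm:d_is_L-convex} (L-convexity of $d$ on $G\times G$ in this modular-semilattice sense) and closure of L-convex functions under nonnegative sums, variable fixing, and direct sums.

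Second, the inner loop is the crux, and the paper does not solve it by a network/submodular-flow reduction. The constructions $D_p$ and $D_{p,r}$ in Sections~\ref{sub:backup} and \ref{sub:multiway} are tailored to those specific multiflow duals (subdivided stars, twisted tree-grids); there is no analogous flow gadget known for arbitrary modular semilattices, and your ``assemble a single submodular-flow instance'' step is unsubstantiated. What the paper actually uses is Theorem~\ref{thm:vscp}: the localization of the 0-EXT objective to $I_x$ or $F_x$ is a bounded-arity sum of submodular functions on a product of modular semilattices, hence an instance of submodular-VCSP, which is solved in polynomial time by the Basic LP via the Kolmogorov--Thapper--\v{Z}ivn\'y criterion. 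With that in hand, Theorem~\ref{thm:bound} bounds the number of SDA iterations by the diameter of $(G^n)^\Delta$, which is at most the diameter of the fixed graph $G$, completing the polynomial bound.
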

For proving this result, \cite{HH150ext} introduced L-convex functions on oriented modular graphs and 
submodular functions on modular semilattices, 
and applied the SDA framework to 0-EXT.
An {\em oriented modular graph} is 
an orientable modular graph endowed 
with an admissible orientation.
A {\em modular semilattice} is a semilattice generalization 
of a modular lattice, introduced by Bandelt, Van De Vel, and Verheul~\cite{BVV93}. 
Recall that a modular lattice $L$ is a lattice such that 
for every $x,y,z \in L$ with $x \succeq z$ it holds $x \wedge (y \vee z) = (x \wedge y) \vee z$.
A modular semilattice
is a meet-semilattice $L$ 
such that every principal ideal is a modular lattice, 
and for every $x,y,z \in L$ 
the join $x \vee y \vee z$ exists provided 
$x \vee y$, $y \vee z$, and $z \vee x$ exist.
These two structures 
generalize Euclidean buildings of type C and polar spaces, respectively, and are related in the following way.
\begin{Prop}\label{prop:modular}
	\begin{itemize}
		\item[{\rm (1)}]\ A semilattice is modular if and only if its Hasse diagram is oriented modular~{\rm \cite{BVV93}}.
		\item[{\rm (2)}]\ 
		Every principal ideal and filter of an oriented modular graph 
		are modular semilattices~{\rm \cite{HH150ext}}. In particular, every interval is a modular lattice.
		\item[{\rm (3)}]\ A polar space is a modular semilattice~{\rm \cite{CCHO14}}.
		\item[{\rm (4)}]\ The Hasse diagram of a Euclidean building of type C is oriented modular~{\rm \cite{CCHO14}}.
	\end{itemize}
\end{Prop}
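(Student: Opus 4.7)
The four parts split naturally into two groups: parts (1) and (2) are structural correspondences between order-theoretic and graph-theoretic notions, while parts (3) and (4) exhibit polar spaces and Euclidean buildings of type C as concrete instances. I would prove them in the order (1), (2), (3), (4), since (2) uses the pattern from (1) and (3)–(4) verify axioms that have already been abstracted.

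For (1), I would show both directions directly. For the forward direction, given a modular semilattice $L$, orient its Hasse diagram from smaller to larger; a 4-cycle $x_1,x_2,x_3,x_4$ in the Hasse diagram must be a diamond (an interval of rank 2), and since every interval is a modular lattice the two minimal elements and two maximal elements are forced, giving admissibility. Modularity of the graph, i.e., the existence of a median for every triple, follows from the classical median formula $(a\wedge b)\vee(b\wedge c)\vee(c\wedge a)$ in a modular lattice once one reduces the triple to a common principal ideal; the reduction uses exactly the modular-semilattice axiom that $a\vee b\vee c$ exists whenever the three pairwise joins exist. For the converse, I would define $\preceq$ from the orientation, read off the meet as the ``downward median'' of $\{x,y\}$ in any interval, and check the semilattice axioms; admissibility gives that principal ideals are modular lattices, and medians in the graph supply the join-existence axiom.

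For (2), the plan is to transport the proof of (1) to a local setting. Given an admissible orientation of a modular graph $G$, the induced poset on $F_x$ has meets defined by: $y\wedge_{F_x} z$ is the median of $x,y,z$ in $G$, which exists by modularity and lies in $F_x$ by admissibility applied along 4-cycles in the interval $[x,\mathrm{med}]$. The modular-lattice property of each interval $[x,u]\subseteq F_x$ again reduces to an admissibility-plus-median computation. The main obstacle I expect is the three-variable join-existence axiom: given $y,z,w\in F_x$ with pairwise joins, I would take medians of all relevant triples and use the admissibility of the orientation on a local neighbourhood of a candidate common upper bound to show that a common join actually exists; this is essentially a local 3-dimensional analysis of oriented modular graphs and is the most technical step of the whole proposition.

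Parts (3) and (4) are then verifications using the axiomatic scaffolding already given in Section~\ref{subsec:building}. For (3), given $x,y,z$ in a polar space $L$ with pairwise joins, I would apply axiom P1 twice (first to the chains containing $x\vee y$ and $z$, then reconciling with the frame containing $x\vee z$) and use P2 to glue, producing a single polar frame $F\simeq S_2^n$ containing all of $x,y,z$; in $F$ the join $x\vee y\vee z$ is immediate, and modularity of each principal ideal is inherited from $S_2^n$. For (4), the same argument with apartments replacing polar frames shows that the Hasse diagram is modular (triples lie in a common apartment $\Sigma\simeq\ZZ^n$ and $\ZZ^n$ with zigzag order is an oriented modular graph), while admissibility of the orientation is a one-apartment check: in each $\ZZ^n$ a 4-cycle is a rank-2 diamond and the zigzag order gives the required opposite-orientation pattern. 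The coherence of these local checks across different frames or apartments is exactly what axioms P2 and B2 guarantee, so no additional global argument is required.
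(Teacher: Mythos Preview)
The paper does not prove Proposition~\ref{prop:modular} at all: each of the four items carries an inline citation (\cite{BVV93}, \cite{HH150ext}, \cite{CCHO14}) and the text moves on immediately. This is a survey, and the proposition is a summary of results established elsewhere. So there is no ``paper's own proof'' to compare against; your proposal is an independent attempt to supply arguments the paper deliberately omits.

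On the substance of your sketch: parts (1) and (2) are plausible outlines, and you correctly flag the three-element join axiom in (2) as the hard step. But parts (3) and (4) contain a genuine gap. You assert that a triple of elements (with pairwise joins, in (3)) or a triple of vertices (in (4)) can be placed in a single polar frame or apartment by ``applying P1 twice and using P2 to glue.'' Axioms P1/B1 only put \emph{two} chains in a common frame/apartment; three arbitrary elements need not lie in any single frame or apartment, even in type C. For a concrete obstruction, take $\Gamma = G \boxtimes H$ with $G,H$ trees having a branch vertex: one can pick three vertices whose projections to $G$ do not lie on any common infinite path, so no apartment $P \boxtimes Q$ contains all three. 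Yet $G \boxtimes H$ is still modular --- the median exists, it just does not arise by reduction to a single $\ZZ^n$. The actual proofs in \cite{CCHO14} proceed differently, using gated-subgraph and metric arguments specific to (weakly) modular graphs rather than a naive common-apartment reduction. Your strategy for (3) and (4) would need to be replaced by such an argument.
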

An admissible orientation is acyclic~\cite{HH150ext}, and 
an oriented modular graph is viewed as (the Hasse diagram of) a poset.

As is expected from these properties 
and arguments in Section~\ref{sec:gridstructures},
an L-convex function on an oriented modular graph 
is defined so that 
it behaves submodular on the local structure 
(principal ideal and filter) 
of each vertex, which is a modular semilattice.
Accordingly, the steepest descent algorithm is well-defined, 
and correctly obtain a minimizer.

We start with the local theory in 
the next subsection (Section~\ref{subsec:submo_modular}), 
where we introduce submodular functions on modular semilattices.
Then, in Section~\ref{subsec:L-convex}, we introduce 
L-convex functions on oriented modular graphs,  
and outline the proof of Theorem~\ref{thm:0-EXT}.

\begin{Rem}\label{rem:dichotomy}{\rm 
		The minimum 0-extension problem 0-EXT$[\Gamma]$ on a fixed $\Gamma$ 
		is a particular instance of 
		{\em finite-valued CSP} 
		with a fixed language. 
		Thapper and \v{Z}ivn\'y~\cite{TZ16ACM} 
		established a dichotomy theorem for finite-valued CSPs.
		The complexity dichotomy in Theorems~\ref{thm:hardness} and \ref{thm:0-EXT} 
		is a special case of their dichotomy theorem, though a characterization of 
		the tractable class of graphs (i.e., orientable modular graphs) 
		seems not to follow directly from their result. 
	}
\end{Rem}

\subsection{Submodular function on modular semilattice}\label{subsec:submo_modular}
A modular semilattice, though not necessarily a lattice, 
admits an analogue of the join, called the {\em fractional join}, 
which is motivated by {\em fractional polymorphisms} in VCSP~\cite{KTZ13,ZivnyBook} 
and enables us to introduce a submodularity concept.

Let $L$ be a modular semilattice, 
and let $r:L \to \ZZ_+$ be the rank function, 
i.e., $r(p)$ is the length of a maximal chain from the minimum element to $p$.
The fractional join of elements $p,q \in L$ is defined as
a formal sum 
\[
\sum_{u \in E(p,q)} [C(u;p,q)] u
\]
of elements $u \in E(p,q) \subseteq L$ with nonnegative coefficients 
$[C(u;p,q)]$, to be defined soon. %
Then 
a function $f:L \to \overline\RR$ is called {\em submodular} if it satisfies 
\begin{equation*}\label{eqn:submodular}
f(p) + f(q) \geq f(p \wedge q) + \sum_{u \in E(p,q)} [C(u;p,q)] f(u)
\quad (p,q \in L).
\end{equation*}

The fractional join of $p,q \in L$ 
is defined according to the following steps; 
see Figure~\ref{fig:frac_join} for intuition.
		\begin{figure}[t]
			\begin{center}
				\includegraphics[scale=0.85]{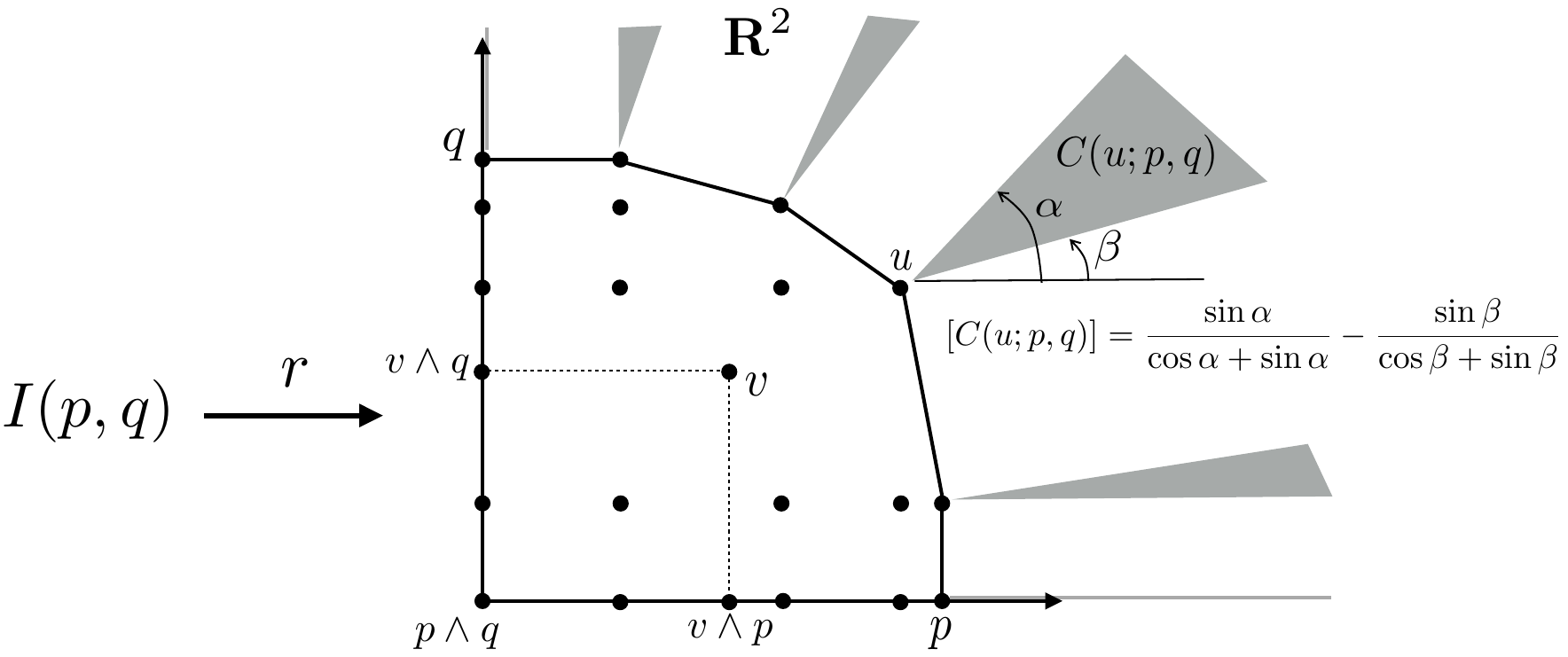}
				\caption{The construction of the fractional join. 
					By $u \mapsto r(u;p,q)$, 
					the set $I(p,q)$ is mapped to points in $\RR^2_+$, 
					where $r(p \wedge q;p,q)$ is the origin, 
					$r(p;p,q)$ and $r(q;p,q)$ are on the coordinate axes.
					Then $\Conv I(p,q)$ is the convex hull of $r(u;p,q)$ over $u \in I(p,q)$. 
					The fractional join is defined as 
					the formal sum of elements mapped to maximal extreme points of $\Conv I(p,q)$.
					}
				\label{fig:frac_join}
			\end{center}
		\end{figure}\noindent
\begin{itemize}
	\item Let $I(p,q)$ denote 
	the set of all elements $u \in L$ represented as $u = a \vee b$ 
	for some $(a,b)$ with $p \succeq a \succeq p \wedge q \preceq b \preceq q$. This representation is unique, and $(a,b)$ equals $(u \wedge p, u \wedge q)$~\cite{HH150ext}.
	
	\item For $u \in I(p,q)$, let $r(u;p,q)$ be the vector in $\RR^2_+$ defined by
	\begin{equation*}\label{eqn:r}
	r(u; p,q) = (r(u \wedge p)- r(p \wedge q), r(u \wedge q) - r(p \wedge q)).
	\end{equation*}
	\item Let $\Conv I(p,q) \subseteq \RR^2_+$ denote 
	the convex hull of vectors $r(u;p,q)$ over all $u \in I(p,q)$.
	\item Let $E(p,q)$ be the set of elements $u$ in $I(p,q)$
	such that $r(u;p,q)$ is a maximal extreme point of $\Conv I(p,q)$.
	Then $u \mapsto r(u;p,q)$ is injective on $E(p,q)$~\cite{HH150ext}.
	\item For $u \in E(p,q)$, 
	let $C(u;p,q)$ denote 
	the nonnegative normal cone at $r(u; p,q)$:
	\[
	C(u;p,q) := \{ c \in \RR_+^2 \mid \langle c, r(u;p,q) \rangle = \max_{x \in \Conv I(p,q)} \langle c, x \rangle \},
	\]
	where $\langle \cdot, \cdot \rangle$ is the standard inner product.
	\item For a convex cone $C \subseteq \RR_+^2$ represented as 
	\[
	C = \{(x,y) \in \RR_+^2 \mid y \cos \alpha \leq x \sin \alpha, y \cos \beta \geq x \sin \beta \}
	\]
	for $0 \leq \alpha \leq \beta \leq \pi/2$,  define nonnegative value $[ C ]$ by
		\[
		[C] := \frac{\sin \alpha}{\sin \alpha + \cos \alpha} - 
		\frac{\sin \beta}{\sin \beta + \cos \beta}. 
		\]
	\item The fractional join of $p,q$ is defined as 
	$\displaystyle
	\sum_{u \in E(p,q)} [C(u;p,q)] u.
    $ 
\end{itemize}
%

This weird definition of the submodularity 
turns out to be appropriate.
If $L$ is a modular lattice, then 
the fractional join is equal to the join $1\cdot \vee = \vee$, 
and our definition of submodularity coincides 
with the usual one.
In the case where $L$ is a polar space, it is shown in \cite{HH16L-convex} that the fractional join of $p,q$ is 
equal to
\[
\frac{1}{2} (p \sqcup q) \sqcup q + \frac{1}{2} (p \sqcup q) \sqcup p,
\]
and hence a submodular function on $L$ is a function satisfying 
\begin{equation}\label{eqn:submo_polar2}
f(p) + f(q) \geq f(p \wedge q) + \frac{1}{2} f((p \sqcup q) \sqcup q) 
+ \frac{1}{2} f((p \sqcup q) \sqcup p) \quad (p,q \in L).
\end{equation}
It is not difficult to see that systems 
of inequalities (\ref{eqn:submo_polar2}) 
and (\ref{eqn:submo_polar1}) define the same class of functions. 
Thus the submodularity concept
in this section is consistent with that in Section~\ref{subsec:building}.

An important property relevant to 0-EXT is its relation 
to the distance on $L$.
Let $d: L \times L \to \RR$ denote the shortest path metric on 
the Hasse diagram of $L$.
Then $d$ is also written as
\[
d(p,q) = r(p) + r(q) - 2r(p \wedge q) \quad (p,q \in L).
\]
\begin{Thm}[\cite{HH150ext}]\label{thm:distance}
	Let $L$ be a modular semilattice. 
	Then the distance function $d$ is submodular on $L \times L$.
\end{Thm}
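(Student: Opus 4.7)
The plan is to unfold the submodular inequality for $d$ on $L\times L$, reduce it to a meet-rank inequality, and then settle that inequality by a directional averaging argument. Write $x=(p,p')$ and $y=(q,q')$, so $x\wedge y=(p\wedge q,p'\wedge q')$ componentwise. Since $L\times L$ is itself a modular semilattice (an easy check from Proposition~\ref{prop:modular}) with rank $r(a,b)=r(a)+r(b)$, the fractional join at $(x,y)$ decomposes along the product: $I(x,y)=I(p,q)\times I(p',q')$, the coordinate vector splits as
\begin{equation*}
r((u_1,u_2);x,y)=r(u_1;p,q)+r(u_2;p',q') \in \RR_+^2,
\end{equation*}
and $\Conv I(x,y)$ is the Minkowski sum of the two factor hulls. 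Thus the maximal extreme pairs $(u_1,u_2)\in E(x,y)$ and the conic weights $[C(\cdot;x,y)]$ are obtained by matching maximal extreme points of the two factor hulls that share a common supporting direction $(\alpha,\beta)\in\RR_+^2$.

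Next I would invoke the rank identity
\begin{equation*}
r(u)=r(u\wedge x)+r(u\wedge y)-r(x\wedge y), \qquad u\in I(x,y),
\end{equation*}
which holds because each $u\in I(x,y)$ has the unique decomposition $u=(u\wedge x)\vee(u\wedge y)$ inside the principal ideal $I_u$, and that ideal is a modular lattice by Proposition~\ref{prop:modular}(2), so the usual modular rank law applies. Substituting $d(a,b)=r(a)+r(b)-2r(a\wedge b)$ into the submodular inequality and using $\sum_{u\in E(x,y)}[C(u;x,y)]=1$, the ``upper'' rank contributions on the two sides telescope and the theorem collapses to a meet-rank inequality of the schematic shape
\begin{equation*}
r(p\wedge q)+r(p'\wedge q')+r(p\wedge p'\wedge q\wedge q') \;\geq\; r(p\wedge p')+r(q\wedge q')+\!\!\sum_{(u_1,u_2)\in E(x,y)}\!\!\![C(u;x,y)]\,r(u_1\wedge u_2),
\end{equation*}
up to a reshuffling of the fixed-input meet-ranks.

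The main obstacle will be this residual meet-rank inequality, because the left side is a fixed combination while the right side is an average that depends on the Minkowski-sum geometry of the two factor hulls. My plan for this step is to read $\sum_u[C(u;x,y)]\,r(u_1\wedge u_2)$ as an integral over directions $(\alpha,\beta)\in\RR_+^2$: each direction selects the unique maximizing extreme pair $(u_1^{(\alpha,\beta)},u_2^{(\alpha,\beta)})$, and $[C(u;x,y)]$ measures the angular weight of directions picking $u$. For a fixed direction I would work inside the modular lattice $I_{u_1^{(\alpha,\beta)}}\cap I_{u_2^{(\alpha,\beta)}}$ (or a common principal ideal containing both, which exists by Proposition~\ref{prop:modular}(2)) and apply the modular rank law together with monotonicity of $r$ under inclusion to bound $r(u_1^{(\alpha,\beta)}\wedge u_2^{(\alpha,\beta)})$ from above in terms of $r(p\wedge p')$, $r(q\wedge q')$, $r(p\wedge q)$, $r(p'\wedge q')$, and $r(x\wedge y)$. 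Integrating this directional bound against the measure $[C(\cdot;x,y)]$ and using $\sum_u[C(u;x,y)]=1$ yields the target inequality. The delicate point is to verify that the ambient modular lattice exists for every direction, which is precisely where the modular-semilattice axiom that $a\vee b\vee c$ exists whenever the pairwise joins do is used.
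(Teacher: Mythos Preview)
Your reduction to the ``meet-rank inequality'' is incorrect, and the error is already visible in the simplest lattice case. Take $L$ to be the four-element chain $0\prec 1\prec 2\prec 3$ and set $p=3$, $p'=0$, $q=0$, $q'=3$. Then $L\times L$ is a lattice, so the fractional join of $x=(3,0)$ and $y=(0,3)$ is simply $x\vee y=(3,3)$ with weight $1$. Your displayed inequality becomes
\[
r(3\wedge 0)+r(0\wedge 3)+r(0)\;\geq\;r(3\wedge 0)+r(0\wedge 3)+r(3\wedge 3),
\]
that is, $0\geq 3$, which is false---whereas the genuine submodular inequality $d(3,0)+d(0,3)\geq d(0,0)+d(3,3)$ reads $6\geq 0$ and holds comfortably. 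So the theorem does not collapse to your meet-rank inequality.

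The underlying mistake is the claim that the ``upper'' rank contributions telescope. For this you would need
\[
\sum_{u\in E(x,y)}[C(u;x,y)]\bigl(r(u;x,y)_1+r(u;x,y)_2\bigr)\;=\;\bigl(r(x)-r(x\wedge y)\bigr)+\bigl(r(y)-r(x\wedge y)\bigr),
\]
i.e.\ that the angular weights $[C(\cdot)]$ reproduce the first moment of the vertices $r(x;x,y)$ and $r(y;x,y)$. They do not: even for the triangle $\Conv\{(0,0),(A,0),(0,B)\}$ one computes $[C((A,0))]=A/(A+B)$ and $[C((0,B))]=B/(A+B)$, whose first moment is $(A^2/(A+B),B^2/(A+B))\neq(A,B)$. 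The weights $[C(\cdot)]$ are calibrated to a different purpose (they make $p\mapsto r(p)+r(q)-2r(p\wedge q)$ submodular directly, not via a rank cancellation), so any correct argument must keep track of the full expression $r(u_1)+r(u_2)-2r(u_1\wedge u_2)$ on the right-hand side rather than isolating $r(u_1\wedge u_2)$.

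Finally, note that the present paper does not give a proof of this theorem; it is quoted from \cite{HH150ext}. The argument there does not proceed by the telescoping you propose; it works directly with the valuation $v(p)=-2r(p_1\wedge p_2)$ and the specific angular measure defining $[C(\cdot)]$, and uses the modular-semilattice axiom to control $I(p,q)$ pointwise rather than through an averaged meet-rank bound.
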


Next we consider the minimization of 
submodular functions on a modular semilattice.
The tractability under general setting (i.e., oracle model) is unknown.
We consider a restricted situation 
of {\em valued constraint satisfaction problem (VCSP)}; 
see \cite{KTZ13,ZivnyBook} for VCSP. 
Roughly speaking, VCSP is the minimization problem 
of a sum of functions with small number of variables.
We here consider the following VCSP 
({\em submodular-VCSP on modular semilattice}).
An input consists of 
(finite) modular semilattices $L_1, L_2, \ldots, L_n$ and submodular functions 
$f_i : L_{i_1} \times L_{i_2} \times \cdots \times L_{i_{k}} \to \overline{\RR}$ with $i=1,2,\ldots,m$ and $1 \leq i_1 < i_2 < \cdots < i_{k} \leq n$, where $k$ is a fixed constant.
The goal is to find $p = (p_1,p_2,\ldots, p_n) \in L_1 \times L_2 \times \cdots \times L_n$ to minimize
\[
\sum_{i=1}^m f_i (p_{i_1}, p_{i_2},\ldots,p_{i_{k}}).
\]
Each submodular function $f_i$ is given as the table of all function values. 
Hence the size of the input is $O(n N + m N^k)$ for $N := \max_{i} |L_i|$.

Kolmogorov, Thapper, and \v{Z}iv\'{n}y~\cite{KTZ13} proved 
a powerful tractability criterion for general VCSP 
such that an LP-relaxation ({\em Basic LP}) exactly solves the VCSP instance. 
Their criterion involves the existence of 
a certain submodular-type inequality ({\em fractional polymorphism})
for the objective functions, 
and is applicable to our submodular VCSP 
(thanks to the above weird definition).
\begin{Thm}[{\cite{HH150ext}}]\label{thm:vscp}
	Submodular-VCSP on modular semilattice 
	is solvable in polynomial time.
\end{Thm}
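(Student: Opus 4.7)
The plan is to invoke the Kolmogorov-Thapper-\v{Z}ivn\'y tractability theorem \cite{KTZ13}, which guarantees that the Basic LP relaxation solves a finite-valued VCSP in polynomial time whenever the constraint language admits a suitable symmetric fractional polymorphism. The entire design of the submodularity inequality on modular semilattices was arranged so that exactly such a polymorphism is available, so the proof amounts to exhibiting it and checking the KTZ hypotheses.

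On each finite modular semilattice $L_i$, I will build the required polymorphism from the two ingredients already packaged in the definition of submodularity. The first is the deterministic meet $\omega_\wedge(p,q) := p \wedge q$. The second is the randomized fractional join $\omega_\vee$, which on input $(p,q)$ outputs $u \in E(p,q)$ with probability $[C(u;p,q)]$; since $L_i$ is finite, this can be realized as a finite probability distribution over ordinary binary operations $L_i \times L_i \to L_i$ by discretizing over the finitely many input pairs. Extending componentwise to the product $L_1 \times \cdots \times L_n$ yields the analogous pair on the joint domain. The defining inequality of submodularity on $L_i$,
\begin{equation*}
f(p) + f(q) \geq f(p \wedge q) + \sum_{u \in E(p,q)} [C(u;p,q)] f(u),
\end{equation*}
is exactly the fractional polymorphism inequality for the pair $(\omega_\wedge, \omega_\vee)$ applied to each constraint function $f_i$ of the instance, and it is preserved under taking the sum that forms the VCSP objective.

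Three things now need to be checked. First, the weights $[C(u;p,q)]$ must sum to $1$ in $u$, so that $\omega_\vee$ really is a probability distribution. This is the geometric content of the construction of \cite{HH150ext}: the nonnegative normal cones $\{C(u;p,q)\}_{u \in E(p,q)}$ tile $\RR^2_+$ into angular sectors, and the functional $[\,\cdot\,]$ assigns to each sector the length of its intersection with the segment $\{x+y=1\} \cap \RR^2_+$, so the weights sum to the total length $1$. Second, $(\omega_\wedge, \omega_\vee)$ is symmetric in its two arguments, because the data $I(p,q)$, $E(p,q)$, and the angular measures $[C(u;p,q)]$ depend only on the unordered pair $\{p,q\}$ (the map $r(u;p,q) \leftrightarrow r(u;q,p)$ is the coordinate swap on $\RR^2_+$, which preserves angular measure). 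Third, to meet KTZ's formal hypothesis of symmetric fractional polymorphisms of all arities, one iterates the binary polymorphism, or appeals to the follow-up result that a single symmetric binary fractional polymorphism already suffices for BLP-solvability.

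The main obstacle is the first verification, the angular tiling identity: one has to show that the maximal extreme points of $\Conv I(p,q)$ give rise to normal cones whose angular widths sum to $1$. This requires understanding the geometry of $\Conv I(p,q) \subseteq \RR^2_+$ and how its upper-right boundary is traced out by the elements of $E(p,q)$, which in turn rests on the structural properties of $I(p,q)$ derived from the modular semilattice axioms. Once this is in hand, the conclusion follows by a direct appeal to \cite{KTZ13}: the Basic LP for an instance of size $O(nN + mN^k)$ is of polynomial size and is solved in polynomial time, its optimum equals the VCSP optimum by the polymorphism hypothesis, and an optimal assignment is extracted by the standard rounding available in the BLP-tractability framework.
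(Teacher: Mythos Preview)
Your proposal is correct and follows the same approach the paper indicates: the submodularity inequality is precisely a binary symmetric fractional polymorphism (with the meet and the fractional join as its two halves), so the Kolmogorov--Thapper--\v{Z}ivn\'y criterion \cite{KTZ13} applies and the Basic LP solves the instance in polynomial time. The paper does not spell out the verifications you list (that the weights $[C(u;p,q)]$ sum to $1$ via the tiling of $\RR^2_+$ by normal cones, symmetry in $(p,q)$, and the reduction to a distribution over deterministic operations), but these are exactly the checks carried out in the cited source \cite{HH150ext}, so your elaboration is on target.
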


\begin{Rem}{\rm 
Kuivinen \cite{Kuivinen09,Kuivinen11} proved 
a good characterization for general SFM 
on product $L_1 \times L_2 \times \cdots \times L_n$ of modular lattices $L_i$ with $|L_i|$ fixed.
Fujishige, Kir\'aly, Makino, Takazawa, and Tanigawa~\cite{FKMTT14} 
proved the oracle-tractability for the case 
where each $L_i$ is a diamond, i.e.,  a modular lattice of rank 2.

}
\end{Rem}

\subsection{L-convex function on oriented modular graph}\label{subsec:L-convex}

Here we introduce L-convex functions for 
a slightly restricted subclass of oriented modular graphs;
see Remark~\ref{rem:general} for general case.
Recall Proposition~\ref{prop:modular}~(2) 
that every interval of an oriented modular graph $\Gamma$ is a modular lattice.
If every interval of $\Gamma$ 
is a {\em complemented} modular lattice, i.e.,
every element is a join of rank-$1$ elements, 
then $\Gamma$ is said to be {\em well-oriented}.
Suppose that $\Gamma$ is a well-oriented modular graph.
The L-convexity on $\Gamma$ is defined 
along the property (3) in Theorem~\ref{thm:convex}, 
not by discrete midpoint convexity, 
since we do not know how to define discrete midpoint operations 
on $\Gamma$. 
Namely an L-convex function on $\Gamma$ is a function $g: \Gamma \to \RR$
such that $g$ is submodular on every principal ideal and filter, and $\dom g$ is chain-connected, 
where the chain-connectivity is similarly defined 
as in Theorem~\ref{thm:convex}.
By this definition, the desirable properties hold:
\begin{Thm}[\cite{HH150ext,HH16L-convex}]
	Let $g$ be an L-convex function on $\Gamma$.
	 If $x \in \dom g$ is not a minimizer of $g$, 
	 then there is $y \in I_x \cup F_x$ with $g(y) < g(x)$. 
\end{Thm}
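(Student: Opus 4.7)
Plan: My plan is to derive the claim from the convexity of the Lov\'asz extension on the CAT(0) geometric realization of $\Gamma$, paralleling the argument for Euclidean buildings outlined after Theorem~\ref{thm:convex}. First I would form the standard geometric realization $K(\Gamma)$ by filling in a simplex for every chain of $\Gamma$, metrized so that every ``apartment-like'' subcomplex is Euclidean, and consider the Lov\'asz extension $\bar g : K(\Gamma) \to \overline{\RR}$ that linearly interpolates $g$ over each simplex. I would then invoke two inputs from \cite{HH150ext,HH16L-convex}: (i) $K(\Gamma)$ is CAT(0), so any two points are joined by a unique geodesic, and (ii) $\bar g$ is convex along every geodesic, i.e.\ the analog for well-oriented modular graphs of the equivalence (1)$\Leftrightarrow$(2) in Theorem~\ref{thm:convex}.

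Granting these, the proof is short. Suppose $x \in \dom g$ is not a minimizer and pick $z \in \dom g$ with $g(z) < g(x)$. Let $\gamma:[0,1] \to K(\Gamma)$ be the unique geodesic from $x$ to $z$. Convexity of $\bar g$ along $\gamma$ gives
$$\bar g(\gamma(t)) \le (1-t)\, g(x) + t\, g(z) < g(x) \quad (t \in (0,1]).$$
For sufficiently small $t > 0$, the point $\gamma(t)$ lies in a single simplex $\sigma$ of $K(\Gamma)$ having $x$ as a vertex. By construction of $K(\Gamma)$, the vertices of $\sigma$ form a chain in $\Gamma$ containing $x$, so every vertex of $\sigma$ other than $x$ lies in $I_x \cup F_x$. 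Since $\bar g|_\sigma$ is the linear interpolation of the values of $g$ at the vertices of $\sigma$ and $\bar g(\gamma(t)) < g(x)$, at least one vertex $y \in \sigma \setminus \{x\}$ must satisfy $g(y) < g(x)$. This $y$ is the desired element of $I_x \cup F_x$.

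The main obstacle is the input that I am black-boxing: the convexity of $\bar g$ under the combinatorial definition of L-convexity used here (submodularity on each $I_x$ and $F_x$ plus chain-connectedness of $\dom g$). For Euclidean buildings this is the content of Theorem~\ref{thm:convex}, but extending it to arbitrary well-oriented modular graphs requires showing that the fractional-join submodular inequalities on principal ideals and filters (themselves modular semilattices by Proposition~\ref{prop:modular}(2)) exactly suffice to make $\bar g$ convex across adjacent simplices, with chain-connectedness propagating the convexity bound across the whole complex. A purely combinatorial alternative would pick $z$ of minimum chain-distance to $x$ among witnesses $g(z) < g(x)$, observe that the shortest chain-path $x = p_0, p_1, \ldots, p_k = z$ is zigzag and that $x, p_2 \in I_{p_1}$, and use the fractional-join inequality on $I_{p_1}$ to produce a shortcut; but making the shortcut step precise (so the chosen $u \in E(x,p_2)$ actually gives a chain-path from $x$ to $z$ of length less than $k$) is where the well-oriented hypothesis and the complemented modular structure of intervals enter, and this again is the technical core of \cite{HH16L-convex}.
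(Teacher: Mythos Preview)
The present paper is a survey and does not itself prove this theorem; it only cites \cite{HH150ext,HH16L-convex}. The analogous Lemmas~\ref{lem:L-optimality1} and \ref{lem:L-optimality2} are proved via the discrete midpoint inequality, a tool the paper explicitly says is unavailable on general oriented modular graphs, so there is no in-paper proof to compare against directly.

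Your geodesic argument is correct and clean \emph{once} inputs (i) and (ii) are granted, and this is indeed the route of \cite{HH16L-convex}: there $K(\Gamma)$ is realized as an \emph{orthoscheme complex} (each chain of length $k$ filled by a standard right-angled $k$-simplex, not a vague ``apartment-like Euclidean'' piece---general oriented modular graphs have no apartments), shown to be CAT(0), and the equivalence of Theorem~\ref{thm:convex} is extended to well-oriented modular $\Gamma$. But you have correctly diagnosed your own gap: the implication (3)$\Rightarrow$(2) that you black-box \emph{is} the substantive content. What you have written is a reduction of the theorem to a strictly harder one, not a proof.

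Your combinatorial alternative is closer in spirit to the original argument in \cite{HH150ext}, which predates the Lov\'asz-extension characterization and does not use it. The shortcut step you could not make precise---using the fractional-join inequality on a modular semilattice to pull a witness $z$ with $g(z)<g(x)$ strictly closer to $x$---is exactly where the complemented-modular structure of intervals and the submodularity of $d$ (Theorem~\ref{thm:distance}) are put to work. As written, neither of your two approaches stands on its own; both defer the actual difficulty to the cited papers.
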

Thus the steepest descent algorithm (SDA) is well-defined, 
and correctly obtains a minimizer of $g$ (if it exists).
Moreover the $l_{\infty}$-iteration bound is also generalized.
Let $\Gamma^{\Delta}$ denote the graph obtained from $\Gamma$ by adding an edge $pq$ if both $p \wedge q$ and $p \vee q$ exist in $\Gamma$, and let $d_{\Delta} := d_{\Gamma^\Delta}$.
Then Theorem~\ref{thm:bound} is generalized as follows.
\begin{Thm}[\cite{HH16L-convex}]\label{thm:bound}
	The number of iterations of SDA 
	applied to L-convex function $g$ and initial point $x \in \dom g$ 
	is at most $d_{\Delta}(x, \opt(g))+2$. 
\end{Thm}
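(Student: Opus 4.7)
The plan is to follow the Murota--Shioura template used for L$^\natural$-convex functions and already employed in the tree-grid setting (Theorem~\ref{thm:l_inf_bound}). I would run an induction on $r := d_\Delta(x, \opt(g))$, the goal being to show that each SDA iteration strictly decreases the $d_\Delta$-distance to some minimizer, so that the algorithm reaches $\opt(g)$ within $r$ descent steps; the extra $+2$ then absorbs the final iteration in which the stopping condition $g(x)=g(y)$ is detected and one book-keeping step at the start.

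The heart of the argument is an exchange lemma that I would state as follows: if $x \in \dom g$ is not a minimizer and $y$ is a steepest direction at $x$ (a minimizer of $g$ over $I_x \cup F_x$), then for every $x^* \in \opt(g)$ with $d_\Delta(x,x^*) \geq 1$, there exists $x^{**} \in \opt(g)$ with $d_\Delta(y, x^{**}) \leq d_\Delta(x, x^*) - 1$. Applied to the minimizer realizing $d_\Delta(x, \opt(g))$ at every step, this immediately produces a sequence $x^{(0)}, x^{(1)}, \ldots$ of SDA iterates together with witnesses $x^*_0, x^*_1, \ldots$ in $\opt(g)$ whose $d_\Delta$-distances to the iterates decrease by one per step until the iterate itself becomes optimal.

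To prove the exchange lemma I would take a shortest $x$-to-$x^*$ path $x = u_0, u_1, \ldots, u_r = x^*$ in $\Gamma^\Delta$. By construction each consecutive pair $u_i, u_{i+1}$ has both a meet and a join in $\Gamma$, so they lie in a common interval (a complemented modular lattice, by well-orientedness) and in particular in some principal ideal and some principal filter where $g$ is submodular. I would then use the submodular inequality with the fractional join of Section~\ref{subsec:submo_modular} applied to the pair $(x, u_1)$ (or, if $u_1 \in F_x$, the ordinary join in that filter) to decompose $g(x)+g(x^*)$ and force one of the produced vertices $u \in E(x,u_1) \subseteq I_x \cup F_x$ to be a minimizer. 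Iterating/adjusting along the path, or equivalently replacing $x^*$ by a suitable element in $E(x^*, u_1)$ obtained from the submodular inequality applied in $F_{x^* \wedge u_1}$, yields a minimizer $x^{**}$ whose $d_\Delta$-distance to $y$ is at most $r-1$, after one checks that $u \preceq y$ or $u \succeq y$ (using that $y$ is the \emph{steepest} direction, hence at least as good as any element of $E(x,u_1)$) and that the shortest path from $u$ to $x^{**}$ has length $r-1$ in $\Gamma^\Delta$.

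The main obstacle will be executing this exchange rigorously in the absence of discrete midpoint operators. In Theorem~\ref{thm:l_inf_bound} the operators $\bullet,\circ$ gave $g(x \bullet x^*)=g(x \circ x^*)=g(x^*)$ directly, so the next witness was obvious. In oriented modular graphs one must instead use the weighted submodular inequality over the set $E(x,x^*)$ of extreme joins, argue that at least one $u \in E(x,x^*)$ lies in $\opt(g)$ (since the fractional-join coefficients are positive and sum to one), and then verify the $d_\Delta$-distance bound by exploiting that in a modular lattice $[x \wedge x^*, x \vee x^*]$ every extreme join is $\Gamma^\Delta$-adjacent to both $x$ and $x^*$. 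Chain-connectivity of $\dom g$ is needed throughout to ensure all intermediate vertices have finite $g$-value so that the submodular inequalities are non-vacuous.
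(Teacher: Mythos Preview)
The paper does not actually prove Theorem~\ref{thm:bound}; it is quoted from \cite{HH16L-convex} without any proof or sketch, so there is no in-paper argument to compare against. I therefore comment only on the internal soundness of your proposal.

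Your overall template (Murota--Shioura exchange lemma: each SDA step reduces $d_\Delta$ to some minimizer by one) is the right shape, but the mechanism you describe does not go through in the oriented-modular setting. Two concrete problems:

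\medskip
\noindent\textbf{(i) The fractional join is a local operation.} The set $E(p,q)$ and the submodular inequality of Section~\ref{subsec:submo_modular} are defined only when $p,q$ lie in a common modular semilattice, i.e., a single principal ideal or filter of $\Gamma$. In your final paragraph you invoke $E(x,x^*)$ and ``a modular lattice $[x\wedge x^*, x\vee x^*]$'' for an arbitrary $x^*\in\opt(g)$ at $d_\Delta$-distance $r$; in general neither $x\wedge x^*$ nor $x\vee x^*$ exists in $\Gamma$ once $r>1$, which is exactly why discrete midpoint operators are unavailable here. So the sentence ``argue that at least one $u\in E(x,x^*)$ lies in $\opt(g)$'' has no object to refer to.

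\medskip
\noindent\textbf{(ii) Even locally, the inequality does not control $d_\Delta$ to the \emph{steepest} $y$.} Restricting to the first step $u_1$ (where $x\wedge u_1$ and $x\vee u_1$ do exist by definition of $\Gamma^\Delta$), submodularity in $F_{x\wedge u_1}$ gives $g(x)+g(u_1)\geq g(x\wedge u_1)+g(x\vee u_1)$. This constrains $g$-values but says nothing about where the steepest direction $y$ sits relative to $x\wedge u_1$ or $x\vee u_1$. You write ``one checks that $u\preceq y$ or $u\succeq y$'', but $y\in I_x\cup F_x$ whereas $x\wedge u_1, x\vee u_1$ need not lie in $I_x$ or in $F_x$ (they lie in $I_{x\vee u_1}$ and $F_{x\wedge u_1}$, which are different semilattices), so comparability is not automatic. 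Without it, you cannot transport the tail $u_1,\ldots,u_r$ of the geodesic to start at $y$, and the claimed decrease $d_\Delta(y,x^{**})\leq r-1$ is unsupported.

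\medskip
In short, you have correctly identified both the target lemma and the obstacle (absence of $\bullet,\circ$), but the proposed workaround---applying the fractional-join inequality along a $\Gamma^\Delta$-geodesic---does not supply the missing link between the steepest direction and the metric $d_\Delta$. A valid proof in this generality needs additional structural input about oriented modular graphs (for instance, how $\Gamma^\Delta$-geodesics can be normalized relative to the partial order, or how the operation on $\Gamma^*$ of Remark~\ref{rem:general} substitutes for midpoints); none of this is present in the proposal.
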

Corresponding to Theorem~\ref{thm:distance}, 
the following holds: 
\begin{Thm}[\cite{HH150ext}]\label{thm:d_is_L-convex}
	Let $G$ be an oriented modular graph.
	The distance function $d$ on $G$ 
	is L-convex on $G \times G$.
\end{Thm}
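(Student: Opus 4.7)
The plan is to verify the definition of L-convexity for $d = d_G$ on the oriented modular graph $G \times G$: namely that $d$ is submodular on every principal ideal and filter of $G \times G$, and that $\dom d = G \times G$ is chain-connected.

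First I would check that $G \times G$ itself inherits a well-oriented modular structure from $G$: the component-wise admissible orientation remains admissible, and each interval in $G \times G$ is a product of intervals of $G$, hence a product of complemented modular lattices and therefore itself a complemented modular lattice. Chain-connectivity of $\dom d = G \times G$ follows from connectivity of $G$ together with the fact that every edge of $G$ is comparable under its admissible orientation; chains in the two coordinates concatenate into chains in the product.

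The core of the argument is local submodularity. Fix $(x_0, y_0) \in G \times G$ and set $I := I_{x_0} \times I_{y_0}$. By Proposition~\ref{prop:modular}(2) together with closure of modular semilattices under direct products, $I$ is a modular semilattice in which meets and fractional joins decompose component-wise into the corresponding operations in $I_{x_0}$ and $I_{y_0}$. My plan is to reduce the submodular inequality for $(x,y) \mapsto d_G(x,y)$ on $I$ to Theorem~\ref{thm:distance} by constructing a modular semilattice $M \subseteq G$ containing $I_{x_0} \cup I_{y_0}$, whose intrinsic Hasse-diagram distance $d_M$ agrees with $d_G$ on $M$. Given such an $M$, Theorem~\ref{thm:distance} yields submodularity of $d_M = d_G$ on $M \times M$, and $I$ embeds into $M \times M$ as a sub-modular-semilattice closed under both meet and fractional join (since these decompose component-wise and preserve membership in $I_{x_0}$ and $I_{y_0}$ separately); thus submodularity descends to $I$. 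The principal-filter case is handled dually, using $F_{x_0} \cup F_{y_0}$ and the modular semilattice structure of principal filters from Proposition~\ref{prop:modular}(2).

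The main obstacle is the construction of $M$ together with the verification of the isometry $d_M = d_G$ on $I_{x_0} \cup I_{y_0}$. The natural candidate for $M$ is the $\wedge$-closure of $I_{x_0} \cup I_{y_0}$ taken inside $G$; that this closure admits a modular semilattice structure is supported by Proposition~\ref{prop:modular}(2), but showing its internal Hasse-diagram distance coincides with $d_G$ requires a convexity/isometric-embedding argument for principal ideals and filters in modular graphs, combined with the semilattice distance formula $d(p,q) = r(p) + r(q) - 2 r(p \wedge q)$. Handling the possibility that $x_0$ and $y_0$ admit no common upper bound, and thus that one cannot simply pass to a single ambient interval, is where the chief technical work lies.
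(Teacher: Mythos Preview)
The paper does not prove this theorem in the text; it is quoted from \cite{HH150ext} without argument, so there is no in-paper proof to compare against. Your overall strategy---verify chain-connectivity and submodularity on every principal ideal and filter of $G\times G$---is the correct reading of the definition, and the preliminary checks (product orientation, interval structure of $G\times G$, chain-connectivity of the full domain) are fine.

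The genuine gap is the construction of $M$. A single modular semilattice $M \subseteq G$ containing $I_{x_0} \cup I_{y_0}$ with $d_M = d_G|_M$ need not exist. Already for $G = \ZZ$ with the zigzag orientation of Section~\ref{subsec:building}, take $x_0 = 1$ and $y_0 = 5$: then $I_{x_0} = \{0,1,2\}$, $I_{y_0} = \{4,5,6\}$, and the even integers are pairwise incomparable minimal elements of the poset $G$. No subposet of $G$ containing two distinct even integers can be a meet-semilattice, so your ``$\wedge$-closure of $I_{x_0}\cup I_{y_0}$ inside $G$'' is undefined---the meets you would close under simply do not exist in $G$. Imposing a different partial order on $M$ to force a semilattice structure does not rescue the argument either: then $I_{x_0}$ is no longer an order-ideal of $M$, the fractional join (which depends on the ambient order and rank) changes, and submodularity of $d_M$ on $M\times M$ no longer restricts to the inequality you need on $I_{x_0}\times I_{y_0}$.

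In short, Theorem~\ref{thm:distance} does not deliver the local submodularity of $d_G$ for free. The passage from ``intrinsic distance on one modular semilattice'' to ``ambient graph distance between points of two unrelated principal ideals'' is exactly the content of Theorem~\ref{thm:d_is_L-convex}, and it requires a direct analysis of shortest paths in modular graphs (medians, gatedness of ideals and filters, metric projections) rather than an ambient-semilattice embedding. You have correctly identified where the difficulty lies; the device you propose does not overcome it.
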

We are ready to prove Theorem~\ref{thm:0-EXT}.
Let $G$ be an orientable modular graph.
Endow $G$ with an arbitrary admissible orientation.
Then the product $G^n$ of $G$ is oriented modular.
It was shown in \cite{HH150ext,HH16L-convex} that the class of L-convex functions is 
closed under suitable operations 
such as variable fixing, nonnegative sum, and direct sum.
By this fact and Theorem~\ref{thm:d_is_L-convex},
the objective function of 0-EXT$[\Gamma]$
is viewed as an L-convex function on 
$G^n$.
Thus we can apply the SDA framework to 0-EXT$[\Gamma]$, 
where each local problem is submodular-VCSP on modular semilattice.
By Theorem~\ref{thm:vscp}, 
a steepest direction at $x$ 
can be found in polynomial time.
By Theorem~\ref{thm:bound},
the number of iterations is 
bounded by the diameter of $(G^n)^{\Delta}$.
Notice that 
for $x,x' \in G^n$,
if $\max_i d(x_i,x_i') \leq 1$, then $x$ and $x'$ are adjacent in $(G^n)^{\Delta}$.
From this, we see that
the diameter of $(G^n)^{\Delta}$ 
is not greater than the diameter of $G$. 
Thus the minimum 0-extension problem on $G$ is solved in polynomial time.

\begin{Rem}\label{rem:general}
	{\rm 
		Let us sketch the definition of L-convex function
		on general oriented modular graph $\Gamma$.
		Consider the poset of all intervals $[p,q]$
		such that $[p,q]$ is a complemented modular lattice, 
		where the partial order is the inclusion order.
		Then the Hasse diagram $\Gamma^*$ is well-oriented modular~\cite{CCHO14,HH150ext}.
		For a function $g: \Gamma \to \overline{\RR}$, 
		let $g^*: \Gamma^* \to \overline{\RR}$ be defined 
		by $g^*([p,q]) = (g(p) + q(q))/2$.
		Then an L-convex function on $\Gamma$
		is defined as a function 
		$g: \Gamma \to \overline{\RR}$ such that 
		$g^*$ is L-convex on $\Gamma^*$.
		With this definition, desirable properties hold. 
		In particular, 
		the original L$^\natural$-convex functions 
		coincide with L-convex functions on the product of directed paths, where $\ZZ$ is identified with an infinite directed path.
		}
\end{Rem}

\section*{Acknowledgments}
The author thanks Yuni Iwamasa for careful reading, Satoru Fujishige for remarks,  
and Kazuo Murota for numerous comments improving presentation.
The work was partially supported by JSPS KAKENHI Grant Numbers 25280004, 26330023, 26280004, 17K00029.
%


\begin{thebibliography}{99}
		\bibitem{BuildingBook}
		P. Abramenko and K. S. Brown: {\em Buildings---Theory and Applications} 
		(Springer, New York, 2008).
		

\bibitem{AK11}
E. Anshelevich and A. Karagiozova: 
Terminal backup, 3D matching, and covering cubic graphs,  
{\it SIAM Journal on Computing} {\bf 40} (2011), 678--708.


\bibitem{Babenko10}
M. A. Babenko:
A fast algorithm for the path 2-packing problem, 
{\em Theory of Computing Systems} {\bf  46} (2010), 59--79. 

\bibitem{BK08ESA}
M. A. Babenko and A. V. Karzanov: 
A scaling algorithm for the maximum node-capacitated multiflow problem,
In: {\em Proceedings of 16th Annual European Symposium on Algorithms} (ESA'08),
{\em LNCS} {\bf 5193} (2008), 124--135.
		

		
		\bibitem{BVV93}
		H.-J. Bandelt, M. van de Vel, and E. Verheul:
		Modular interval spaces. {\em Mathematische Nachrichten} {\bf 163}
		(1993), 177--201.
		
		\bibitem{BKM15}
		A. Bern{\'a}th, Y. Kobayashi, and T. Matsuoka: 
		The generalized terminal backup problem, 
	 {\em SIAM Journal on Discrete Mathematics} {\bf 29} 
	 (2015), 1764--1782. 
		
		\bibitem{BrHa}
		M. R. Bridson and A. Haefliger: 
		{\it Metric Spaces of Non-positive Curvature} 
		(Springer-Verlag, Berlin, 1999).
	
		
		
		\bibitem{CCHO14}
		J. Chalopin, V. Chepoi, H. Hirai, and D. Osajda: 
		Weakly modular graphs and nonpositive curvature. preprint, (2014),  
		{\tt arXiv:1409.3892}.
		%
		
		\bibitem{Chepoi96}
		V. Chepoi: 
		A multifacility location problem on median spaces.
		{\em Discrete Applied Mathematics}, {\bf 64} (1996), 1--29. 
		


    	\bibitem{FT90}
		P. Favati and F. Tardella, Convexity in nonlinear integer programming, 
		{\em Ricerca Operativa} {\bf 53} (1990), 3--44.
		
		\bibitem{FrankBook}
		A. Frank: {\it Connections in Combinatorial Optimization}, 
		Oxford University Press, Oxford, 2011.
		%
		
		
		\bibitem{FujiBook}
		S. Fujishige:
		{\it Submodular Functions and Optimization, 2nd Edition}, 
		Elsevier, Amsterdam, 2005.
		
		
		
		\bibitem{Fujishige14}
		S. Fujishige:
		Bisubmodular polyhedra, simplicial divisions, and discrete convexity.
		{\it Discrete Optimization}, {\bf 12} (2014), 115--120.
		
		
		\bibitem{FI00survey}
		S. Fujishige and S. Iwata: Algorithms for submodular flows, 
		{\it  IEICE Transactions on Information and Systems} {\bf 83} (2000), 322--329.
		
		
		\bibitem{FKMTT14}
		S. Fujishige, T Kir\'aly,  K. Makino, K. Takazawa, and S. Tanigawa: 
		Minimizing submodular functions on diamonds via generalized fractional matroid matchings.
		EGRES Technical Report (TR-2014-14), (2014).
		
		
		\bibitem{FM00}
		S. Fujishige and K. Murota: 
		Notes on L-/M-convex functions and the separation theorems.
		{\em Mathematical Programming, Series A}, {\bf 88} 
		(2000), 129--146.

		
		\bibitem{FZ92}
		S. Fujishige and X. Zhang:
		New algorithms for the intersection problem of submodular systems, 
		{\it Japan Journal of Industrial and Applied Mathematics} 
		{\bf 9} (1992), 369--382. 
		
		\bibitem{Fukunaga14}
		T. Fukunaga: Approximating the generalized terminal backup problem
		via half-integral multiflow relaxation. 
		{\it SIAM Journal on Discrete Mathematics}, 
		{\bf 30} (2016), 777--800.
		
		\bibitem{GVY04}
		N. Garg, V.  V. Vazirani, and M. Yannakakis: 
		Multiway cuts in node weighted graphs.
		{\it Journal of Algorithms}, {\bf 50} (2004), 49--61.
		
		\bibitem{GK97}
		A. V. Goldberg and A. V. Karzanov: 
		Scaling methods for finding a maximum free multiflow of minimum cost, 
		{\it Mathematics of Operations Research} {\bf 22} (1997), 90--109.
		
		
		\bibitem{Gratzer}
		G. Gr{\"a}tzer: {\it Lattice Theory: Foundation} (Birkh\"auser, Basel, 2011).
		
		\bibitem{Hassin83}
		R. Hassin: 
		The minimum cost flow problem: a unifying approach to dual algorithms and a new tree-search algorithm, 
		{\em Mathematical Programming} {\bf 25} (1983) 228--239.

		\bibitem{HH11folder}
		H. Hirai:
		Folder complexes and multiflow combinatorial dualities.
		{\em SIAM Journal on Discrete Mathematics}, {\bf 25} (2011), 1119--1143.
		

		
		\bibitem{HH13tree_shaped}
		H. Hirai:
		Half-integrality of node-capacitated multiflows 
		and tree-shaped facility locations on trees. 
		{\em Mathematical Programming, Series A}, {\bf 137} (2013), 503--530. 
		

		
		
		
		
		\bibitem{HH14extendable}
		H. Hirai:
		L-extendable functions and a proximity scaling algorithm 
		for minimum cost multiflow problem. 
		{\it Discrete Optimization}, {\bf 18} (2015), 1--37.
		
		
		\bibitem{HH15node_multi}
		H. Hirai:
		A dual descent algorithm for node-capacitated multiflow problems and its applications. 
		preprint, (2015), {\tt arXiv:1508.07065}.
		
		
		\bibitem{HH150ext}
		H. Hirai:
		Discrete convexity and polynomial solvability in minimum 0-extension problems. 
		{\em Mathematical Programming, Series A},
		{\bf 155} (2016), 1--55.
		
		\bibitem{HH16L-convex}
		H. Hirai: L-convexity on graph structures, 2016, {\tt arXiv:1610.02469}.
%

		
		
		\bibitem{HK12}
		A. Huber and V. Kolmogorov: 
		Towards minimizing $k$-submodular functions. 
		In {\em Proceedings of 
			the 2nd International Symposium on Combinatorial Optimization (ISCO'12)},  
		LNCS 7422, (Springer, Berlin, 2012), 451--462.
		
		

		
		\bibitem{Iwamasa}
		Y. Iwamasa: On a general framework for network representability
		in discrete optimization, {\em Journal of Combinatorial Optimization}, to appear.
		
		
		\bibitem{IWY14}
		Y. Iwata, M. Wahlstr\"om, and Y. Yoshida: 
		Half-integrality, LP-branching and FPT Algorithms.
		{\em SIAM Journal on Computing}, 
		{\bf 45} (2016), 1377--1411.

		
		\bibitem{Kar79}
		A. V. Karzanov:  A minimum cost maximum multiflow problem, 
		in: {\em Combinatorial Methods for Flow Problems}, 
		Institute for System Studies, Moscow, 1979, pp. 138--156 (Russian).
		
		\bibitem{Kar94}
		A. V. Karzanov: Minimum cost multiflows in undirected networks, 
		{\it Mathematical Programming, Series A} {\bf 66} (1994), 313--324.
		
		
		\bibitem{Kar98a}
		A. V. Karzanov:  
		Minimum $0$-extensions of graph metrics. 
		{\em European Journal of Combinatorics}, {\bf 19} (1998), 71--101.
		
		
		\bibitem{Kar04a}
		A. V. Karzanov:
		One more well-solved case of the multifacility location problem.
		{\em Discrete Optimization}, 
		{\bf 1} (2004), 51--66. 
		
		
		\bibitem{KleinbergTardos02}
		J. Kleinberg and {\'E}. Tardos: 
		Approximation algorithms for classification problems with
		pairwise relationships: metric labeling and Markov random fields,
		{\em Journal of the ACM} {\bf 49} (2002), 616--639.
		
		
		\bibitem{Kolen}
		A. W. J. Kolen: 
		{\em Tree Network and Planar Rectilinear Location Theory}, 
		CWI Tract 25, Center for Mathematics and Computer Science,
		Amsterdam, 1986.
		
		\bibitem{Kolmogorov11}
		V. Kolmogorov: 
		Submodularity on a tree: Unifying L$^\natural$-convex and bisubmodular functions.
		In {\em Proceedings of the 36th International Symposium on 
			Mathematical Foundations of Computer Science (MFCS'11)},
		LNCS 6907, (Springer, Berlin, 2011), 400--411.
		

		
		
		\bibitem{KTZ13}
		V. Kolmogorov, J. Thapper, and S. \v{Z}ivn\'y: 
		The power of linear programming for general-valued CSPs. 
		{\em SIAM Journal on Computing}, {\bf 44} (2015), 1--36.
	
	\bibitem{Kuivinen09}	
	F. Kuivinen: Algorithms and Hardness Results for Some Valued CSPs, 
	Dissertation No. 1274,
	Link\"oping Studies in Science and Technology, Link\"oping University, 
	Link\"oping Sweden,
	2009.	
		
		
		\bibitem{Kuivinen11}
		F. Kuivinen:
		On the complexity of submodular function minimisation on diamonds.
		{\em Discrete Optimization}, {\bf 8} (2011), 459--477.
		
		
		
		\bibitem{Lovasz83}
		L.  Lov\'asz:
		Submodular functions and convexity. 
		In A. Bachem, M. Gr\"otschel, and B. Korte (eds.):
		{\it Mathematical Programming---The State of the Art} 
		(Springer-Verlag, Berlin, 1983), 235--257. 
		
		\bibitem{Murota98}
		K. Murota:  
		Discrete convex analysis.  
		{\em Mathematical Programming}, {\bf 83} (1998), 313--371.
		
		
		\bibitem{MurotaBook}
		K. Murota: 
		{\it Discrete Convex Analysis} 
		(SIAM, Philadelphia, 2003).
		
		\bibitem{MurotaDevelop}
		K. Murota:
		Recent developments in discrete convex analysis. In W. J. Cook, L. Lov\'asz and J. Vygen (eds.):
		{\it Research Trends in Combinatorial Optimization}
		(Springer-Verlag, Berlin, 2009),  219--260.
		
		\bibitem{MurotaShioura14}
		K. Murota and A. Shioura: 
		Exact bounds for steepest descent algorithms of L-convex function minimization. 
		{\em Operations Research Letters}, {\bf 42} (2014), 361--366.

		
		
		\bibitem{Pap07STOC}
		G. Pap: Some new results on node-capacitated packing of A-paths, 
		In: {\em Proceedings of the 39th Annual ACM Symposium on Theory of Computing (STOC'07)},  ACM, New York, 2007, pp. 599--604.
		
		\bibitem{Pap08EGRES}
		G. Pap: Strongly polynomial time solvability of integral and half-integral node-capacitate  multiflow problems, 
		{\em EGRES Technical Report}, TR-2008-12, (2008).
		

\bibitem{Shioura17L-convex_survey}
A. Shioura:
Algorithms for L-convex function minimization: connection between discrete convex analysis and other research fields.  
{\em Journal of Operations Research Society of Japan}, to appear.


\bibitem{TFL83}
B. C. Tansel, R. L. Francis, and T. J. Lowe:  
Location on networks I, II, 
{\em Management Science} {\bf 29} (1983), 498--511.


		\bibitem{TZ16ACM}
		J. Thapper and S. \v{Z}ivn\'y:
	The complexity of finite-valued CSPs, 
	{\em Journal of the ACM} {\bf 63} (2016), Article No. 37.	

		
		\bibitem{TitsBuilding}
		J. Tits: {\it Buildings of Spherical Type and Finite BN-pairs}. 
		Lecture Notes in Mathematics, Vol. 386. (Springer-Verlag, Berlin-New York, 1974).
		

		
		\bibitem{Vazirani}
		V. V. Vazirani: {\it Approximation Algorithms} 
		(Springer-Verlag, Berlin, 2001). 
		
		
		\bibitem{ZivnyBook}
		S. \v{Z}ivn\'y:
		{\it The Complexity of Valued Constraint Satisfaction Problems}
		(Springer, Heidelberg, 2012).
\end{thebibliography}
\end{document}